\documentclass{article}
\usepackage[utf8]{inputenc}
\usepackage[affil-it]{authblk}
\usepackage{graphicx} 
\usepackage{subcaption}
\usepackage{ulem}
\usepackage{comment,amsmath,amsbsy,amssymb,graphicx,dsfont,upgreek,textcomp,braket,setspace,blindtext,hyperref,verbatim,amsthm,mathrsfs,mathtools,graphicx,float,enumerate,cleveref,cases,bigints}
\usepackage[font={scriptsize,it}]{caption}
\usepackage[margin=1in]{geometry}
\hypersetup{
    colorlinks=true, 
    linktoc=all,     
    linkcolor=blue,  
}
\usepackage{tocloft} \setlength\cftparskip{-5pt}
\usepackage[table,svgnames]{xcolor}
\usepackage{tikz}
\usetikzlibrary{arrows}
\usepackage[toc,page]{appendix}
\usepackage{xcolor}

\usepackage[font={normalsize,it}]{caption}

\Crefname{appsec}{appendix}{appendices}
\numberwithin{equation}{section}

\newtheorem{theorem}{Theorem}[section]
\newtheorem{lemma}{Lemma}[section]
\newtheorem{corollary}{Corollary}[section]
\newtheorem{remark}{Remark}[section]

\newtheorem{proposition}{Proposition}[section]

\title{Localization and global dynamics in the long-range discrete nonlinear Schr\"odinger equation}\vspace{-1ex}
\author{Brian Choi\thanks{Corresponding author. University of Tennessee at Chattanooga, \texttt{choigh@bu.edu}}, Austin Marstaller\thanks{Southern Methodist University, \texttt{amarstaller@smu.edu}}, Alejandro Aceves\thanks{Southern Methodist University, \texttt{aaceves@smu.edu}}%
\vspace{-2ex}  }

\date{\today\vspace{-1ex}}
\begin{document}
\maketitle\vspace{-4ex}
\maketitle
\begin{abstract}

We study localization, pinning, and mobility in the fractional discrete nonlinear Schrödinger equation (fDNLS) with generalized power-law coupling. A finite-dimensional spatial-dynamics reduction of the nonlocal recurrence yields onsite and offsite stationary profiles; their asymptotic validity, orbital stability of onsite solutions, and $l^2$ proximity to the exact lattice solutions are established. Using the explicit construction of localized states, it is shown that the spatial tail behavior is algebraic for all $\alpha > 0$. The Peierls–Nabarro barrier (PNB) is computed, and the parameter regimes are identified where it nearly vanishes; complementary numerical simulations explore mobility/pinning across parameters and exhibit scenarios consistent with near-vanishing PNB. We also analyze modulational instability of plane waves, locate instability thresholds, and discuss the role of nonlocality in initiating localization. Finally, we establish small-data scattering, and quantify how fDNLS dynamics approximates the nearest-neighbor DNLS on bounded times while exhibiting distinct global behavior for any large $\alpha$.
\end{abstract}
\smallskip
\noindent \textbf{Keywords.} Discrete NLS, localization, Peierls–Nabarro barrier, modulational instability, nonlocality

\noindent \textbf{MSC 2020} 34A08, 34A12, 34A34, 37K45, 37K60, 78M35


\section{Introduction}
Studies in arrays of coupled nonlinear oscillators remains a field of intense research whether it aims at understanding thermalization or the emergence of localized coherent structures, or global synchronization as exemplified by the FPUT or the Kuramoto oscillator model, respectively. Their relevance goes beyond theory as they model systems in a wide range of applications including photonics, plasmonics \cite{eisenberg1998discrete,aceves1996discrete}, Bose-Einstein condensates \cite{anderson1998macroscopic,trombettoni2001discrete}, and biological/chemical phenomena \cite{Mingaleev1999,PhysRevE.55.6141}. For more comprehensive surveys, see \cite{kevrekidis,SulemSulem:NLS_book}. In all these cases, the emerging models are large systems of coupled ordinary differential equations. Similarly for continuous fields, universal equations such as the nonlinear Schr\"odinger equation (NLSE) or the sine-Gordon equation have provided a platform to study many features in nonlinear wave phenomena including the existence and interactions of solitons, and the formation of coherent structures and singular blow-up dynamics.

Specific to nonlinear photonics, two canonical models are the NLSE and the discrete nonlinear Schrödinger equation (DNLS) given by
\begin{equation}\label{dnls}
    i \dot{u}_n = -\epsilon \delta^2 u_n - |u_n|^2 u_n,\ (n,t) \in \mathbb{Z} \times \mathbb{R},\ \epsilon>0,
\end{equation}
where $\delta^2 u_n := \sum\limits_{|j-n|=1} u_j - 2u_n$.
The discrete model describes waveguide/resonator arrays with nearest-neighbor coupling, whereas the continuum model captures, e.g., pulse propagation in optical fibers or waveguides. The present work focuses on a nonlocal discrete variant with long-range interactions (LRI), where the coupling strength decays algebraically. The baseline model is
\begin{equation}\label{fdnls}
i \dot{u}_n = \epsilon \sum_{m\in\mathbb{Z} \setminus \{n\} } \frac{u_n - u_m}{|n - m|^{1+\alpha}} - |u_n|^2 u_n,\ (n,t) \in \mathbb{Z} \times \mathbb{R},\ \epsilon>0,\ \alpha > 0,
\end{equation}
referred to as the fractional discrete NLS (fDNLS) for convenience. We note that for $\alpha \in (0,2]$, \eqref{fdnls} can be viewed as a discretization of the continuum fractional NLSE \eqref{continuum_fnls}; instead, the analysis here treats more general LRI profiles \eqref{generalmodel} which extends to all $\alpha >0$. Recent similar studies include nonlocal DNLS dynamics and higher-dimensional extensions \cite{PhysRevE.55.6141,molina2020two,johansson1998switching,rasmussen1998localized}; for example, localized modes on a square lattice were reported numerically in \cite{molina2020two}. The results below extend DNLS to LRI coupling and complement the literature by (i) providing a rigorous asymptotic characterization of spatial decay of localized states across the full range of $\alpha$, (ii) analyzing modulational instability and the emergence of coherent structures, and (iii) deriving the Peierls–Nabarro barrier (PNB) and indicating its role on mobility.

Unlike the multitude of experimental results on waveguide arrays modeled by the DNLS, at this time there is no concrete photonics-based array for which the fDNLS is an experimentally-verifiable model. We do believe, however, the results could pave the way to future realizations where nonlocality presents a new degree of freedom. As it relates to the continuum analogue
\begin{equation}\label{continuum_fnls}
    i\partial_t U = (-\Delta)^{\frac{\alpha}{2}} U - |U|^2 U,\ (x,t) \in \mathbb{R}^{d+1},
\end{equation}
where $(-\Delta)^{\frac{\alpha}{2}}u(x) := c(d,\alpha) \text{p.v.} \int\limits_{\mathbb{R}^{d}} \frac{u(x) - u(y)}{|x-y|^{d+\alpha}}dy$, has been proposed as a model for an optical cavity \cite{longhi2015fractional} whereby a proper design of lenses allows one to engineer diffraction to behave as $|k|^{\alpha},\ 0 < \alpha < 2$ in the Fourier representation. The suggested benefit of such a design is to produce laser beam outputs with unique Airy-like profiles as opposed to the classical Gaussian-like outputs.

The transition between continuum and discrete descriptions, either discretizing a continuum model or taking a continuum limit of an intrinsically discrete one, is natural but often non-trivial. For \eqref{fdnls}, \cite{kirkpatrick2013continuum} identifies three distinct continuum limits depending on $\alpha$. In \cite{malomed2024}, a 1D version of \eqref{fdnls} with $\alpha \in (0,2]$ is used to develop a discretized model, complementary to the analysis here. That study aligns with \Cref{MI} in spirit but concentrates on $\alpha \in (0,2]$, whereas \Cref{MI} emphasizes larger $\alpha$. The core results in \Cref{solution_DNLS} are to be solely representative of the discrete model, for which the relevant comparison is between DNLS and fDNLS.

A central question for nonlinear lattices is the transport properties of localized energy: mobility along the array requires transitions between onsite and offsite configurations which encounters the Peierls–Nabarro energy barrier induced by discreteness. In the continuum power-type NLSE (e.g., \eqref{continuum_fnls} with $\alpha = 2$), the Galilean invariance generates traveling families by boosting ground states with a proper momentum "kick", and in the mass-critical case the pseudoconformal symmetry underlies excitation-threshold results for strong nonlinearities \cite{Weinstein1983NonlinearSE,weinstein1989nonlinear}. Such symmetries are absent on lattices, so a nonzero PNB is expected. The asymptotic analysis here confirms a nonlocal PNB, and numerical simulations across $\alpha$ values show that initially boosted localized states generically pin to lattice sites.

One of the objectives of our work here, is to characterize modulational instability (MI) in fDNLS as a function of $\alpha$: MI thresholds and unstable $k$-bands are identified and the dependence on $\alpha$ is discussed (including recovering classical DNLS as $\alpha \rightarrow \infty$). Qualitative post-MI outcomes are outlined as a reminder of the standard pathway to coherent structures in dispersive media \cite{ZAKHAROV2009540,dauxois1993energy,daumont1997modulational}. Prior lattice studies include nonlocal hopping and interactions in 1D \cite{gori2013}, dipole–dipole–induced dispersion in higher dimensions under long-range approximations \cite{christiansen1998solitary}, and hybrids with continuum dispersion and two discrete directions coupled by inverse powers \cite{hadvzievski2003}, with emergence of coherent structures reported in related one-discrete models \cite{Copeland:20}. Rather than fixing a single nonlocal model, the analysis treats $\alpha$ as the primary control parameter, clarifying how nonlocality and discreteness jointly set the MI band and shape the post-instability states.

While the focus is the discrete model, its continuum limit is manifested via the scaling $\epsilon = c(d,\alpha) h^{-\alpha}$ where $\alpha \in (0,2]$ and $h>0$ the lattice spacing. The continuum theory is more delicate and requires specialized well-posedness tools in Sobolev spaces, including the Littlewood–Paley theory \cite{DINH2019117,1534-0392_2015_6_2265,dinh:hal-01426761,choi2022well}. The continuum limit is nontrivial: the long-range coupling can be recast via fractional-derivative operators \cite{tarasov2006continuous}, and strong discrete-to-continuum convergence has been proved using uniform discrete Strichartz estimates \cite{hong2019strong}, with two-dimensional fractional extensions \cite{choi2023continuum,christiansen1998solitary}. In this limit, the Galilean invariance is recovered and PNB vanishes (indeed exponentially for DNLS and long-range variants) \cite{JenWeinLocal,jenkinson_weinstein_2017}. By contrast, the results here address the anti-continuum regime, where new PNB formulas and asymptotics are derived on the discrete lattice.

This paper is organized as follows. In \Cref{background}, the main model and its properties are introduced. In \Cref{wellposedness}, the global dynamics corresponding to $\alpha \gg 1$ is investigated. As $\alpha \rightarrow \infty$, \eqref{fdnls} converges, locally in time but not globally, to the DNLS. In \Cref{MI}, linear stability analysis on the CW solution of fDNLS is shown with explicit regions of MI. The onset of nonlinear bound states resulting from MI is shown via numerical simulations. In \Cref{solution_DNLS}, the family of onsite and offsite solutions and the corresponding PNB are constructed asymptotically. Although the rigorous aspects of numerical analysis (consistency, convergence, etc) are not our main focus, we provide the numerical discretization of the periodic fractional Laplacian in \Cref{appendix}. 

\section{Background}\label{background}

In this paper, we consider the generalized nonlocal model (also considered in \cite{kirkpatrick2013continuum,tarasov2006continuous,jenkinson_weinstein_2017}) where the infinitesimal generator is given by
\begin{equation}\label{inf}
    (\mathscr{L}_\alpha f)_n = \sum_{m \neq n} J_{|n-m|}(f_n - f_m),\ (f_n) \in l^2(\mathbb{Z}),\ \alpha >0,
\end{equation}
where $J_n \geq 0$ is the $\alpha$-kernel satisfying the limit property
\begin{equation*}
    \lim\limits_{n \rightarrow \infty} n^{1+\alpha} J_n = A_\alpha \in (0,\infty),
\end{equation*}
and when $\alpha = \infty$, define it as an $\infty$-kernel if $\lim\limits_{n \rightarrow \infty} n^{1+\alpha} J_n = 0$ for all $\alpha >0$; assume that $(J_n)_{n=1}^\infty$ is not identically zero. Note that $\mathscr{L}_\alpha$ defines a family of self-adjoint, bounded linear operators on $l^2(\mathbb{Z})$. With the convention $\mathcal{F}[f](k) = \sum\limits_{n \in \mathbb{Z}} f_n e^{in  k},\ k \in \mathbb{T} = (-\pi,\pi]$, the Fourier symbol of $\mathscr{L}_{\alpha}$ is given by $2 \sum\limits_{n \neq 0} J_{|n|} \sin^2 \left(\frac{nk}{2}\right)$, and therefore $\| \mathscr{L}_{\alpha} \|_{l^2 \rightarrow l^2} \leq 4 J$ where $J = \sum\limits_{n=1}^{\infty} J_n$. In numerical applications, our focus lies in specific LRI kernels defined by $J_n = |n|^{-(1+\alpha)}$, but assume the general form of nonlocal interaction kernel unless otherwise specified.

A particular nonlinear model generated by \eqref{inf} is given by
\begin{equation}\label{generalmodel}
    i \dot{u}_n = \epsilon\mathscr{L}_\alpha u_n - |u_n|^2 u_n,\ u(0) = f \in l^2(\mathbb{Z}),\ \epsilon > 0,
\end{equation}
and the stationary model, by taking the ansatz $u_n(t) = e^{i\omega t} Q_n$ where $\omega >0,\ Q_n \in \mathbb{R}$, is given by 
\begin{equation}\label{generalmodel_stationary}
    -{\omega} Q_n = \epsilon\mathscr{L}_\alpha Q_n - Q_n^3.
\end{equation}
Under the flow \eqref{generalmodel}, there are at least two conserved quantities given by
\begin{equation}\label{energy}
N[u(t)] = \sum_{n} |u_n|^2,\ E[u(t)] = \frac{\epsilon}{2} \langle \mathscr{L}_\alpha u_n , u_n \rangle_{l^2} - \frac{1}{4} \sum_{n}|u_n|^4,    
\end{equation}
representing the particle number/mass/power and energy, respectively. The kinetic energy terms corresponding to DNLS \eqref{dnls} and fDNLS \eqref{fdnls} are given by $\frac{\epsilon}{2}\sum\limits_{n \in \mathbb{Z}} |u_{n+1} - u_n|^2$ and $\frac{\epsilon}{4}\sum\limits_{n,m: n\neq m} \frac{|u_n - u_{m}|^2}{|n-m|^{1 + \alpha}}$, respectively.

There are varying definitions of the fractional Laplacian on bounded domains, and hence a particular numerical discretization of $(-\Delta)^{\frac{\alpha}{2}}$ needs to be defined since numerical simulations depend on an appropriate spatial truncation. In \Cref{MI} where the evolution of $u_n(0) = A > 0,\ -N \leq n \leq N$ is studied, the periodic boundary condition is imposed whereas the zero exterior Dirichlet boundary condition is imposed to simulate localized wave solutions in \Cref{solution_DNLS}. The spectral definition of $(-\Delta)^{\frac{\alpha}{2}}$ is defined as a multiplier $|k|^{\alpha}$.

\section{Global well-posedness and small data scattering}\label{wellposedness}

The approximation of DNLS via fDNLS for large $\alpha$ (see \Cref{gwp_convergence}) need not hold for the long-time dynamics, and the exponential bound \eqref{exp_growth} provides no control global in time. The long-time dynamics of an extended Hamiltonian system exhibits a rich structure featuring multi-breathers, transition into chaos, and small data scattering, just to name a few, where such variety of features arises from conservation laws in stark contrast to dissipative systems. \Cref{gwp_divergence} demonstrates small data scattering uniform in $\alpha$ for sufficiently high nonlinearity. As a corollary, this proves the existence of strictly positive excitation thresholds of ground state solutions for fDNLS. Note that \cite{weinstein1999excitation} showed that $p \geq 5$ for DNLS (the power nonlinearity; see \eqref{gwp_divergence2}) is the sufficient and necessary condition for positive excitation thresholds. We leave it as a future work to investigate the case $5 \leq p < 7$ for fDNLS.

By the contraction mapping argument and the embedding $l^p(\mathbb{Z}) \hookrightarrow l^q(\mathbb{Z})$ whenever $p \leq q$, the well-posedness of \eqref{fdnls}, \eqref{dnls} is established. As long as the LRI is described by a self-adjoint operator and the nonlinear interaction, by a local nonlinearity, the following well-posedness result follows similarly. 

\begin{proposition}\label{gwp}
    Let $L$ be a bounded, self-adjoint operator on $l^2(\mathbb{Z})$ and $\Tilde{N}:\mathbb{C} \rightarrow \mathbb{C}$ such that
    \begin{equation*}
    \Tilde{N}(0) = 0,\ |\Tilde{N}(z_1) - \Tilde{N}(z_2)| \leq C\left(\max(|z_1|,|z_2|)\right)|z_1 - z_2|,    
    \end{equation*}
    where $C:[0,\infty) \rightarrow [0,\infty)$ is an increasing function. Then the initial-value problem
\begin{equation}\label{ivp}
    i\dot{u}_n = L u_n + \Tilde{N}(u_n),\ u(0) = f \in l^2(\mathbb{Z}),
\end{equation}
    is globally well-posed; for any $f \in l^2(\mathbb{Z})$, there exists a unique solution $u \in C^1_{loc}(\mathbb{R};l^2(\mathbb{Z}))$ to \eqref{ivp} such that $u(0) = f$ and the data-to-solution map $f \mapsto u$ is Lipschitz continuous.
\end{proposition}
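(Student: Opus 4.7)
The plan is to recast \eqref{ivp} in Duhamel form, solve locally in time by a contraction argument, and then iterate to a global solution using an a priori $l^2$-bound coming from the self-adjointness of $L$ together with the gauge structure of the nonlinearity. Since $L$ is bounded and self-adjoint on $l^2(\mathbb{Z}^d)$, the propagator $S(t) := e^{-itL}$ is a strongly continuous one-parameter unitary group (via Stone's theorem or directly by the convergent operator exponential), and \eqref{ivp} is equivalent to the fixed-point equation
\begin{equation*}
u(t) \;=\; \Phi_f(u)(t) \;:=\; S(t)\,f \,-\, i\int_0^t S(t-s)\,N(u(s))\,ds,
\end{equation*}
with $N$ acting pointwise in $n$. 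The crucial technical ingredient is the trivial discrete embedding $\|v\|_{l^\infty(\mathbb{Z}^d)} \leq \|v\|_{l^2(\mathbb{Z}^d)}$, which converts the pointwise hypothesis on $N$ into a Lipschitz-on-bounded-sets estimate: whenever $\|u\|_{l^2},\|v\|_{l^2} \leq R$, one obtains $\|N(u) - N(v)\|_{l^2} \leq C(R)\,\|u-v\|_{l^2}$ and, since $N(0)=0$, also $\|N(u)\|_{l^2} \leq C(R)\,\|u\|_{l^2}$, promoting the Nemitski operator $N$ to a locally Lipschitz map $l^2 \to l^2$.

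For the local step I would set $R = 2\|f\|_{l^2}$ and work on the closed ball $B_R \subset C([-T,T]; l^2(\mathbb{Z}^d))$ of radius $R$ in the sup-in-time norm. The unitarity of $S(t)$ then yields $\|\Phi_f(u)\|_{L^\infty_t l^2_n} \leq \|f\|_{l^2} + 2T\,C(R)\,R$ and $\|\Phi_f(u) - \Phi_f(v)\|_{L^\infty_t l^2_n} \leq 2T\,C(R)\,\|u-v\|_{L^\infty_t l^2_n}$, so a choice $T \lesssim 1/C(R)$ makes $\Phi_f$ a strict contraction of $B_R$ into itself. Banach's fixed point theorem then delivers a unique local solution $u \in C([-T,T]; l^2(\mathbb{Z}^d))$. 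Lipschitz dependence on the datum is obtained by running the same estimates on $\Phi_{f_1}(u) - \Phi_{f_2}(v)$, and $C^1_{loc}$-regularity in time follows directly from the integral representation once $u \in C^0_t l^2_n$.

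To upgrade to a global solution one uses that the local existence time depends only on $\|u(t_0)\|_{l^2}$, so any a priori $l^2$-bound allows indefinite iteration. Differentiating along the flow gives $\tfrac{d}{dt}\|u\|_{l^2}^2 = 2\operatorname{Im}\langle Lu + N(u),\, u\rangle_{l^2}$; self-adjointness of $L$ annihilates the linear term, and for the paper's cubic nonlinearity $N(z) = -|z|^2 z$ the identity $\operatorname{Im}(N(z)\bar z) = 0$ eliminates the nonlinear term as well, giving conservation of mass $\|u(t)\|_{l^2} = \|f\|_{l^2}$. The main obstacle I anticipate is precisely this last step in the full generality of the statement: without a gauge-type condition on $N$ one only obtains the Grönwall inequality $(\|u\|_{l^2}^2)' \leq 2C(\|u\|_{l^2})\|u\|_{l^2}^2$, which can blow up in finite time once $C$ grows super-linearly. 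For all physically relevant $N$ entering \eqref{generalmodel} the imaginary part vanishes pointwise and this difficulty evaporates, but a fully general statement really requires either this gauge invariance or an additional growth condition on $C$.
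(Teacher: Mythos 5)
Your proposal follows the same route the paper intends: the paper gives no written proof, only the remark that the result follows ``by the contraction mapping argument and the embedding $l^p(\mathbb{Z}^d)\hookrightarrow l^q(\mathbb{Z}^d)$ for $p\leq q$,'' as in \cite[Proposition 4.1]{kirkpatrick2013continuum}, and your Duhamel/fixed-point scheme with the bound $\|v\|_{l^\infty}\leq\|v\|_{l^2}$ is exactly that argument. Your closing caveat is not a defect of your proof but a correct observation about the statement itself: with only $N(0)=0$ and the local Lipschitz hypothesis, the contraction argument yields local well-posedness and Lipschitz dependence, while the \emph{global} claim genuinely requires either the gauge condition $\operatorname{Im}\bigl(N(z)\bar z\bigr)=0$ (which holds for every nonlinearity actually used in the paper, giving conservation of $\|u\|_{l^2}$) or a linear growth restriction on $C$; for instance $L=0$ and $N(z)=i|z|^2z$ satisfy all stated hypotheses yet produce sitewise finite-time blow-up of $i\dot u_n=N(u_n)$. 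So your proof is correct and complete for the version of the proposition the paper actually uses, and it correctly isolates the one place where the literal statement overreaches.
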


\begin{proposition}\label{gwp_convergence}
Assume the hypotheses of \Cref{gwp} hold. Let $\{L_\alpha\}_{\alpha>0}$ and $L$ be bounded, self-adjoint operators on $l^2(\mathbb{Z})$ such that $L_\alpha \xrightarrow[\alpha \rightarrow \infty]{} L$ in norm. Let $u^{(\alpha)},v \in C^1_{loc}(\mathbb{R};l^2(\mathbb{Z}))$ be the well-posed solutions to \eqref{ivp} given by $L_\alpha, L$, respectively, satisfying $u^{(\alpha)}(0) = f^{(\alpha)},\ v(0) = g$, both in $l^2(\mathbb{Z})$. Assume $\sup\limits_{\alpha > 0}\| f^{(\alpha)} \|_{l^2} \leq M$. Then there exists $C = C(M, \| g \|_{l^2})>0$ such that for all $t \geq 0$,
\begin{equation}\label{exp_growth}
    \| u^{(\alpha)}(t) - v(t) \|_{l^2} \leq e^{Ct}\Big(\| f^{(\alpha)}-g \|_{l^2} + t \| L_\alpha - L\| \cdot \| g \|_{l^2}\Big).
\end{equation}
\end{proposition}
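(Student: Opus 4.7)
The plan is to analyze the difference $w(t) := u^{(\alpha)}(t) - v(t)$ via an energy estimate followed by Gronwall's inequality. Subtracting the two evolution equations satisfied by $u^{(\alpha)}$ and $v$ yields
\begin{equation*}
i\dot{w}(t) = L_\alpha w(t) + (L_\alpha - L) v(t) + \bigl(N(u^{(\alpha)}(t)) - N(v(t))\bigr),
\end{equation*}
with $w(0) = f^{(\alpha)} - g$. Because $L_\alpha$ is self-adjoint on $l^2(\mathbb{Z}^d)$, the computation $\frac{1}{2}\frac{d}{dt}\|w\|_{l^2}^2 = \operatorname{Re}\langle \dot w,w\rangle = \operatorname{Im}\langle i\dot w,w\rangle$ kills the linear dispersive term (as $\operatorname{Im}\langle L_\alpha w,w\rangle = 0$), leaving only the source $(L_\alpha-L)v$ and the nonlinear difference to bound.

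A preliminary step is to secure the uniform-in-time bounds $\|u^{(\alpha)}(t)\|_{l^2}\le M$ and $\|v(t)\|_{l^2}\le \|g\|_{l^2}$. These follow from conservation of the mass $N[u]$ in \eqref{energy}, which is a consequence of the phase-invariance of the cubic nonlinearity governing \eqref{generalmodel} (the setting in which this proposition will be applied). The pointwise Lipschitz hypothesis on $N$ then lifts to the $l^2$ estimate
\begin{equation*}
\|N(u^{(\alpha)}(t)) - N(v(t))\|_{l^2} \le C\bigl(\max(M,\|g\|_{l^2})\bigr)\,\|w(t)\|_{l^2},
\end{equation*}
since the monotonicity of $C$ and the inequality $|u_n^{(\alpha)}(t)|,|v_n(t)|\le \max(M,\|g\|_{l^2})$ allow termwise application of the hypothesis before summing in $n$. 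The source obeys the crude bound $\|(L_\alpha - L)v(t)\|_{l^2}\le \|L_\alpha - L\|\cdot \|g\|_{l^2}$.

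Combining these ingredients gives the scalar differential inequality
\begin{equation*}
\frac{d}{dt}\|w(t)\|_{l^2} \le K\,\|w(t)\|_{l^2} + \|L_\alpha - L\|\cdot\|g\|_{l^2}, \qquad K := C\bigl(\max(M,\|g\|_{l^2})\bigr),
\end{equation*}
valid at points where $\|w\|_{l^2}>0$ and extendable to all $t$ by the standard regularization $\sqrt{\|w\|_{l^2}^2+\varepsilon}$ with $\varepsilon\downarrow 0$. Multiplying through by $e^{-Kt}$ and integrating yields
\begin{equation*}
\|w(t)\|_{l^2} \le e^{Kt}\|f^{(\alpha)}-g\|_{l^2} + \frac{e^{Kt}-1}{K}\,\|L_\alpha - L\|\cdot\|g\|_{l^2},
\end{equation*}
and the elementary inequality $(e^{Kt}-1)/K \le t\,e^{Kt}$ recovers precisely \eqref{exp_growth} with $C=K$.

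The main obstacle is not a technical one but a bookkeeping one: the abstract framework of \Cref{gwp} alone does not provide a time-independent $l^2$ bound on $u^{(\alpha)}(t)$, so $K$ in the final inequality would otherwise depend on $t$ and defeat the advertised exponential-in-$t$ form. This is resolved by explicitly invoking conservation of mass for the phase-invariant nonlinearity, which is the operational context of interest. Once this a priori control is in place, the remainder of the argument is a routine self-adjoint energy estimate combined with Gronwall.
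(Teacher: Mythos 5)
Your proposal is correct and follows essentially the same route as the paper: the same decomposition $i\dot w = L_\alpha w + (L_\alpha - L)v + N(u^{(\alpha)}) - N(v)$, the same reliance on mass conservation and the embedding $l^2 \hookrightarrow l^\infty$ to fix the local Lipschitz constant at $C(\max(M,\|g\|_{l^2}))$, and Gronwall to close. The only cosmetic difference is that you differentiate $\|w\|_{l^2}^2$ and use self-adjointness of $L_\alpha$ to kill the dispersive term, whereas the paper writes the Duhamel integral and uses unitarity of $e^{-itL_\alpha}$ — the same cancellation in integrated form.
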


\begin{proof}
    A sketch of proof is given. For notational brevity, say $u = u^{(\alpha)},\ f = f^{(\alpha)}$. The well-posedness of $u,v$ follows by \Cref{gwp}. Setting $\phi = u - v$, we have
    \begin{equation*}
        i\dot{\phi}_n = L_\alpha \phi_n + (L_\alpha - L)v_n + \Tilde{N}(u_n) - \Tilde{N}(v_n).
    \end{equation*}
    By integrating, it follows that
    \begin{equation*}
        \phi_n(t) = e^{-it L_\alpha} \phi_n(0) - i \int_0^t e^{-i(t-t^\prime)L_\alpha} \bigl\{(L_\alpha - L)v_n(t^\prime) + \Tilde{N}(u_n(t^\prime)) - \Tilde{N}(v_n(t^\prime))\bigl\}dt^\prime, 
    \end{equation*}
    and by the triangle inequality, the unitarity of $e^{-it L_\alpha}$, the conservation of particle numbers, and the embedding $l^2(\mathbb{Z})\hookrightarrow l^\infty(\mathbb{Z})$, we have
    \begin{equation*}
    \| \phi(t) \|_{l^2} \leq \| \phi(0) \|_{l^2} + t \| L_\alpha - L \| \cdot \| g \|_{l^2}  + C(\max(M,\| g \|_{l^2})) \int_0^t \| \phi(t^\prime )\|_{l^2} dt^\prime,
    \end{equation*}
    where $C>0$ is the (local) Lipschitz constant of the nonlinearity $\Tilde{N}$. The proof follows from the Gronwall's inequality.   
\end{proof}

\begin{remark}\label{rmk2}
    In the context of LRI and DNLS, the operators defined by 
\begin{equation*}
    (L_\alpha f)_n = \epsilon\sum\limits_{m \neq n} \frac{f_n - f_m}{|n-m|^{1+\alpha}},\ L = -\epsilon \delta^2,     
\end{equation*}
    on $l^2(\mathbb{Z})$ are Fourier multipliers with the symbols given by
\begin{equation}\label{discrete_symbol}
    \sigma_\alpha(k) = 2\epsilon \sum_{n \neq 0} \frac{\sin^2\left(\frac{n  k}{2}\right)}{|n|^{1+\alpha}};\ \sigma(k) = 2\epsilon \sum_{|n|=1}\sin^2\left(\frac{n k}{2}\right).    
\end{equation}
Then the norm-convergence hypothesis of \Cref{gwp_convergence} is satisfied since
\begin{equation*}
    |\sigma_\alpha(k) - \sigma(k)| = 2\epsilon \left|\sum_{|n| \geq 2} \frac{\sin^2\left(\frac{nk}{2}\right)}{|n|^{1+\alpha}}\right| \leq \sum_{|n| \geq 2} \frac{2\epsilon}{|n|^{1+\alpha}} \lesssim \epsilon \int_2^\infty \frac{dr}{r^{1+\alpha}} = \frac{\epsilon}{\alpha 2^\alpha} \xrightarrow[\alpha \to \infty]{} 0,
\end{equation*}
uniformly in $k$. Hence by \eqref{exp_growth}, the short-time dynamics of fDNLS is well-approximated by that of the DNLS for large $\alpha$ on $\mathbb{Z}$. 
\end{remark}

\begin{theorem}\label{gwp_divergence}
Let $\mathscr{L}_\infty := \mathscr{L} = -\delta^2$ and $\mathscr{L}_\alpha$ be defined by $J_n = |n|^{-(1+\alpha)}$ by \eqref{inf}. Define $U(t) = e^{-it\mathscr{L}}$ and $U^{(\alpha)}(t) = e^{-it\mathscr{L}_\alpha}$. By \Cref{gwp}, let $u^{(\alpha)}, v := u^{(\infty)} \in C^1_{loc}(\mathbb{R};l^2(\mathbb{Z}))$ be the well-posed solutions to
\begin{equation}\label{gwp_divergence2}
\begin{split}
i \dot{u}_n^{(\alpha)} &= \mathscr{L}_\alpha u_n^{(\alpha)} + \mu |u_n^{(\alpha)}|^{p-1} u_n^{(\alpha)}\\
i \dot{v}_n &= \mathscr{L} v_n + \mu |v_n|^{p-1} v_n,
\end{split}   
\end{equation}
where $u^{(\alpha)}(0)=v(0)=f \in l^2(\mathbb{Z})$ and $\mu = \pm 1,\ p \geq 7$. Then there exists $\Tilde{\alpha} > 0$ such that for all $\Tilde{\alpha} < \alpha < \infty$, the data-to-asymptotic-state map is well-defined as an isometric homeomorphism in a small neighborhood of $l^2(\mathbb{Z})$ uniformly in $\Tilde{\alpha} < \alpha \leq \infty$; more precisely, there exists $\delta_1(\Tilde{\alpha}) > 0$ such that whenever $\| f \|_{l^2} < \delta_1$, there exists unique $u_+^{(\alpha)} \in l^2(\mathbb{Z})$ such that
\begin{equation}\label{scattering}
    \| u^{(\alpha)}(t) - U^{(\alpha)}(t) u_+^{(\alpha)}\|_{l^2} \xrightarrow[t \rightarrow \infty]{} 0.
\end{equation}
Furthermore there exists $\delta_2(\Tilde{\alpha})>0$ such that whenever $0 < \| f \|_{l^{\frac{5}{4}}} < \delta_2$, we have
\begin{equation}\label{dispersive_est}
    \| u^{(\alpha)}(t) \|_{l^5} \leq C(\Tilde{\alpha}) (1+|t|)^{-\frac{1}{5}} \| f \|_{l^{\frac{5}{4}}},
\end{equation}
and
\begin{equation}\label{singular_limit}
    \inf_{\alpha > \Tilde{\alpha}}\sup_{t \in [0,\infty)}\| u^{(\alpha)}(t) - v(t) \|_{l^2} > 0.
\end{equation}
\end{theorem}

\begin{lemma}\label{lem}
    There exists $\alpha_0^* > 0$ such that for all $\alpha > \alpha_0^*$, there exists $t_\alpha>2^\alpha$ and
    \begin{equation*}
        \| (U^{(\alpha)}(t_\alpha) - U(t_\alpha))f \|_{l^2} \geq c \| \widehat{f} \|_{L^{1/3}},
    \end{equation*}
    for all $f \in l^2(\mathbb{Z}) \setminus \{0\}$ where $c>0$ is independent of $\alpha$ and $f$.
\end{lemma}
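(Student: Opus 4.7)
The plan is to diagonalize both propagators through the Fourier transform $\mathcal{F}: l^2(\mathbb{Z}) \to L^2(\mathbb{T})$ from \Cref{background}. By Plancherel,
\begin{equation*}
\| (U^{(\alpha)}(t) - U(t))f \|_{l^2}^2 = \int_{\mathbb{T}} 4\sin^2\!\Bigl(\tfrac{t\,\delta_\alpha(k)}{2}\Bigr)|\mathcal{F}[f](k)|^2\, dk, \qquad \delta_\alpha(k) := \sigma_\alpha(k) - \sigma(k),
\end{equation*}
and by \Cref{rmk2} specialized to $d=1$ one has $\delta_\alpha(k) = 2\sum_{n\geq 2}(1-\cos nk)/n^{1+\alpha}$. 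The task reduces to finding $t_\alpha > 2^\alpha$ at which the multiplier $\sin^2(t_\alpha\delta_\alpha(k)/2)$ stays bounded below on a set carrying positive $|\mathcal{F}[f]|^2$-mass.

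Next I would isolate the leading asymptotics of $\delta_\alpha$ as $\alpha \to \infty$. Peeling off the $n=2$ term yields
\begin{equation*}
\delta_\alpha(k) = 2^{1-\alpha}\sin^2 k + R_\alpha(k), \qquad |R_\alpha(k)| \leq 4\sum_{n\geq 3} n^{-(1+\alpha)} \leq C\, 3^{-\alpha},
\end{equation*}
uniformly in $k \in \mathbb{T}$. Consequently, for every compact $K \subset (-\pi,\pi)\setminus\{0,\pm\pi\}$ there exist $c_K > 0$ and $\alpha_K > 0$ with $c_K 2^{-\alpha} \leq \delta_\alpha(k) \leq C\, 2^{-\alpha}$ whenever $\alpha > \alpha_K$ and $k \in K$. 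This is the correct scale: on $K$, the phase $t\delta_\alpha(k)$ becomes order one exactly when $t$ reaches $O(2^\alpha)$, which is what forces the threshold $t_\alpha > 2^\alpha$ in the statement.

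Since $f \neq 0$ in $l^2(\mathbb{Z})$, $|\mathcal{F}[f](k)|^2\,dk$ cannot be concentrated on the null set $\{0,\pm\pi\}$, so I fix compact $K \subset (-\pi,\pi)\setminus\{0,\pm\pi\}$ with $\kappa := \int_K |\mathcal{F}[f]|^2\, dk > 0$, set $M := 1 + 8/c_K$, and take $t_\alpha := M\cdot 2^\alpha$. A direct computation gives
\begin{equation*}
\frac{1}{t_\alpha - 2^\alpha}\int_{2^\alpha}^{t_\alpha} 4\sin^2\!\Bigl(\tfrac{s\,\delta_\alpha(k)}{2}\Bigr)\, ds = 2 - \frac{2\bigl[\sin(t_\alpha \delta_\alpha(k)) - \sin(2^\alpha\delta_\alpha(k))\bigr]}{(t_\alpha - 2^\alpha)\,\delta_\alpha(k)},
\end{equation*}
whose remainder is bounded on $K$ by $4/[(M-1)c_K] \leq 1/2$ for $\alpha > \alpha_K$, so the average exceeds $3/2$ pointwise on $K$. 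Multiplying by $|\mathcal{F}[f]|^2$, integrating in $k$, applying Fubini, and invoking the mean value theorem then produces some $t_\alpha^\ast \in (2^\alpha, t_\alpha]$ with $\|(U^{(\alpha)}(t_\alpha^\ast)-U(t_\alpha^\ast))f\|_{l^2}^2 \geq \tfrac{3}{2}\kappa$, the desired lower bound with $\alpha_0^\ast := \alpha_K$ and $c := \sqrt{3\kappa/2}$ independent of $\alpha$.

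The central difficulty is the local degeneracy of $\delta_\alpha$ at $k \in \{0,\pm\pi\}$: since $\sigma_\alpha$ and $\sigma$ agree to leading order at these points, no pointwise bound $|e^{-it_\alpha\sigma_\alpha(k)} - e^{-it_\alpha\sigma(k)}|\geq c$ can hold uniformly on $\mathbb{T}$, so the argument must exploit that $\mathcal{F}[f]$ retains definite mass away from them and match the time scale $t_\alpha \sim 2^\alpha$ to the symbol difference $\delta_\alpha \sim 2^{-\alpha}$. Everything else is standard Plancherel plus elementary calculus.
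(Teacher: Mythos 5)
Your proof is correct, but it takes a genuinely different route from the paper's. Both arguments start from Plancherel and reduce the problem to bounding the multiplier $|e^{-it\sigma_\alpha(k)}-e^{-it\sigma(k)}|^2=4\sin^2\bigl(t\,\delta_\alpha(k)/2\bigr)$ from below, and both isolate the same mechanism: $\delta_\alpha(k)=2\sum_{n\ge 2}(1-\cos nk)/n^{1+\alpha}=2^{1-\alpha}\sin^2 k+O(3^{-\alpha})$, so the relevant time scale is $t\sim 2^\alpha$. From there the paper fixes the single time $t_\alpha=2^{\alpha-1}(2\pi+X_0)$, uses the reverse H\"older inequality to factor the $f$-dependence out as $\|\widehat f\|_{L^{1/3}}^2$, and then performs a delicate sublevel-set analysis of $\{1-\cos X<c\}$ near the degeneracies $k=0,\pi$ (the roots $k_0,k_1$, Taylor expansions of $X$) to show that the dual integral $\int(1-\cos X)^{-1/5}dk$ is $O(1)$. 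You instead restrict to a compact $K$ avoiding $\{0,\pm\pi\}$ that carries positive $|\widehat f|^2$-mass and average in time over $[2^\alpha,M2^\alpha]$; since the time average of $4\sin^2(s\delta_\alpha(k)/2)$ is $2+O\bigl(((M-1)c_K)^{-1}\bigr)$ uniformly on $K$, some time in the window must work. Your route is more elementary (no reverse H\"older, no sublevel-set estimates) and, as a bonus, produces such times in every window $[S,MS]$ with $S\ge 2^\alpha$, which is what the contradiction at the end of \Cref{gwp_divergence} actually needs. What it gives up is uniformity in $f$: your $K$, hence $M$, $t_\alpha$, $\alpha_0^*=\alpha_K$ and $c=\sqrt{3\kappa/2}$, all depend on $f$, whereas in the paper only $c$ does (through $\|\widehat f\|_{L^{1/3}}$) while $t_\alpha$ and $\alpha_0^*$ are universal. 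The lemma's literal claim that $c$ is independent of $f$ is unattainable by any argument (the left-hand side is at most $2\|f\|_{l^2}$) and the paper's proof does not achieve it either; since the lemma is only applied to the fixed datum $f=v_+$, your weaker uniformity is harmless. Two cosmetic points: your Plancherel identity omits the factor $\tfrac{1}{2\pi}$ coming from the paper's normalization of $\mathcal{F}$, and to guarantee $t_\alpha^*>2^\alpha$ strictly you should observe that a continuous function cannot lie strictly below its average throughout the interval, or simply average over $[2\cdot 2^\alpha,M2^\alpha]$.
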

\begin{proof}
    By the Plancherel Theorem and the reverse H\"older inequality,
\begin{align}\label{int}
\| (U^{(\alpha)}(t/4) - U(t/4))f\|_{l^2}^2 &= \frac{1}{2\pi} \bigintsss_{-\pi}^{\pi} \left|\exp \left(-it \sum_{m=1}^\infty\frac{\sin^2\left(\frac{mk}{2}\right)}{m^{1+\alpha}}\right) - \exp\left(-it \sin^2\left(\frac{k}{2}\right)\right)\right|^2 \cdot|\widehat{f}(k)|^2 dk\nonumber\\
&\gtrsim \| \widehat{f} \|_{L^{1/3}}^2 \left(\bigintsss_{-\pi}^{\pi} \left|\exp \left(-it \sum_{m=2}^\infty\frac{\sin^2\left(\frac{mk}{2}\right)}{m^{1+\alpha}}\right) - 1\right|^{-\frac{2}{5}} dk\right)^{-5}\nonumber\\
&\simeq \| \widehat{f} \|_{L^{1/3}}^2 \left(\bigintsss_{0}^{\pi} \left(1-\cos\left(t \sum\limits_{m=2}^\infty\frac{\sin^2\left(\frac{mk}{2}\right)}{m^{1+\alpha}}\right)\right)^{-\frac{1}{5}} dk\right)^{-5},
\end{align}
since $0 < \| \widehat{f} \|_{L^{1/3}} \lesssim \| \widehat{f} \|_{L^2} < \infty$. For the convenience of notation, let
\begin{equation*}
    X(k) = t \sum\limits_{m=2}^\infty\frac{\sin^2\left(\frac{mk}{2}\right)}{m^{1+\alpha}}.
\end{equation*}
Then the integral above on $k \in [0,\pi]$ is split into
\begin{equation*}
    \int_0^{\pi} (1-\cos X)^{-\frac{1}{5}} dk = \int_{\{1-\cos X < c\}} (1-\cos X)^{-\frac{1}{5}} dk + \int_{\{1-\cos X \geq c\}} (1-\cos X)^{-\frac{1}{5}} dk =: I + II,
\end{equation*}
where $c>0$ is sufficiently small. Then $II$ is $O(1)$ depending only on $c$, and therefore it suffices to estimate $I$.

First consider $0 \leq X < X_0$ where $X_0$ is the smallest positive real such that $1-\cos X_0 = c$, and suppose $t = 2^{1+\alpha}(2\pi + X_0)$. Then
\begin{equation}\label{admissible}
    0 \leq \sin^2 k < \frac{X_0}{2\pi + X_0} - 2^{1+\alpha}\sum_{m \geq 3} \frac{\sin^2(\frac{mk}{2})}{m^{1+\alpha}},
\end{equation}
and $k \in [0,\pi]$ that satisfies \eqref{admissible} are near $k=0,\pi$ since the series term is negligible for all $\alpha>0$ sufficiently large due to the uniform estimate 
\begin{equation}\label{technical}
    2^{1+\alpha}\sum_{m \geq 3} \frac{\sin^2(\frac{mk}{2})}{m^{1+\alpha}} < 2\left(\frac{1}{3}+\frac{1}{\alpha}\right) \left(\frac{2}{3} \right)^{\alpha},
\end{equation}
that follows from majorizing the series into an appropriate integral. The arguments for the estimation near $k = 0,\pi$ are similar, and therefore assume $k$ is near $0$. Note that \eqref{admissible} is satisfied for $0 \leq k < k_0$ where $k_0$ is the smallest positive real root of
\begin{equation*}
    \sin^2(k_0) = \frac{X_0}{2\pi + X_0} - 2^{1+\alpha}\sum_{m \geq 3} \frac{\sin^2(\frac{mk_0}{2})}{m^{1+\alpha}}.
\end{equation*}
Then for all $\alpha>0$ sufficiently large depending on $X_0$, we have $k_0 < k_0^*$ where $k_0^*$ is the smallest positive real root of $\sin^2(k_0^*) = \frac{X_0}{2\pi + X_0}$. By the Taylor expansion,
\begin{equation*}
    k_0 < k_0^* \lesssim X_0^{\frac{1}{2}},
\end{equation*}
and
\begin{equation}\label{technical3}
\begin{split}
\int_{\{0 \leq X < X_0\}} (1-\cos X)^{-\frac{1}{5}} dk &\lesssim \int_{\{0 \leq X < X_0\}} X^{-\frac{2}{5}} dk \lesssim \int_{\{0 \leq X < X_0\}} (2\pi + X_0)^{-\frac{2}{5}} \sin^{-\frac{4}{5}}(k) dk\\
&\lesssim \int_{0}^{k_0} k^{-\frac{4}{5}} dk \lesssim X_0^{\frac{1}{10}},    
\end{split}    
\end{equation}
where the implicit constants depend only on $c,X_0$.

Now consider $|X-2\pi| < X_0$. Since the analysis for $0 < X - 2\pi < X_0$ and $-X_0 < X - 2\pi <0$ are similar, we focus on the former. Then
\begin{align*}\label{sin}
2\pi < 2^{1+\alpha}(2\pi + X_0) \left(\frac{\sin^2 k}{2^{1+\alpha}} + \sum_{m \geq 3} \frac{\sin^2(\frac{mk}{2})}{m^{1+\alpha}}\right) &< 2\pi + X_0\nonumber\\
\frac{2\pi}{2\pi + X_0} - 2^{1+\alpha}\sum_{m \geq 3} \frac{\sin^2(\frac{mk}{2})}{m^{1+\alpha}} &< \sin^2 k < 1- 2^{1+\alpha}\sum_{m \geq 3} \frac{\sin^2(\frac{mk}{2})}{m^{1+\alpha}} < 1.  
\end{align*}
Define $k_1(\alpha),k_1^* \in (0,\frac{\pi}{2})$ such that 
\begin{equation*}
\begin{split}
\sin^2 k_1 &= \frac{2\pi}{2\pi + X_0} - 2^{1+\alpha}\sum_{m \geq 3} \frac{\sin^2(\frac{mk}{2})}{m^{1+\alpha}}\\
\sin^2 k_1^* &= \frac{2\pi}{2\pi + X_0}.
\end{split}    
\end{equation*}
The Taylor expansion of $\cos X$ near $X = 2\pi$ yields
\begin{equation*}
\begin{split}
\int_{\{0<X-2\pi<X_0\}} (1-\cos X)^{-\frac{1}{5}}dk &\lesssim \int_{E} |X-2\pi|^{-\frac{2}{5}}dk\\
&= \int_{k_1}^{\frac{\pi}{2}} |X-2\pi|^{-\frac{2}{5}}dk + \int_{\frac{\pi}{2}}^{\pi - k_1} |X-2\pi|^{-\frac{2}{5}}dk =: III+ IV  
\end{split}
\end{equation*}
where 
\begin{equation*}
E = \{k \in [0,\pi]:\frac{2\pi}{2\pi + X_0} - 2^{1+\alpha}\sum\limits_{m \geq 3} \frac{\sin^2(\frac{mk}{2})}{m^{1+\alpha}} < \sin^2 k < 1\} = (k_1,\pi - k_1).    
\end{equation*}
The term $IV$ can be estimated similarly as $III$, and therefore the work for $III$ is shown. For sufficiently large $\alpha>0$, the Taylor expansion of $X(k)$ near $k = k_1$ yields
\begin{equation*}
\begin{split}
X(k) &= X(k_1) + X^\prime(k_1) (k-k_1) + O(|k-k_1|^2)\\
&= 2\pi + 2^\alpha (2\pi + X_0) \sum_{m \geq 2} \frac{\sin (mk_1)}{m^\alpha} (k-k_1) + O(|k-k_1|^2).    
\end{split}
\end{equation*}
By arguing as \eqref{technical}, the coefficient of $k-k_1$ is bounded above by a constant independent of $\alpha$. Conversely, since $k_1 < k_1^*$ and $k_1 \xrightarrow[\alpha \rightarrow \infty]{}k_1^*$, we have $2k_1 \in (2k_1^* - \delta,2k_1^*)$ for some $\delta>0$ and $\sin (2k_1) >0$. The lower bound is given by
\begin{equation*}
    2^\alpha \sum_{m \geq 2} \frac{\sin (mk_1)}{m^\alpha}  \geq \sin (2k_1) - \sum_{m \geq 3} \left(\frac{2}{m}\right)^\alpha > \sin (2k_1^*) - \sum_{m \geq 3} \left(\frac{2}{m}\right)^\alpha \gtrsim 1,
\end{equation*}
where the implicit constant is independent of $\alpha$. Hence the linear coefficient is bounded above and below by a positive constant independent of $\alpha$, and similarly, the quadratic coefficient is bounded above uniformly in $\alpha$. Then
\begin{equation}\label{technical2}
    III \lesssim \int_{k_1}^{\frac{\pi}{2}} |k-k_1|^{-\frac{2}{5}}dk = O(1).
\end{equation}
Hence by \eqref{technical3}, \eqref{technical2}, and $t = 2^{1+\alpha} (2\pi + X_0)$, there exists $M>0$ independent of $\alpha$ such that
\begin{equation*}
    \int_{\{1- \cos X <c\}} (1-\cos X)^{-\frac{1}{5}} dk \leq \int_{\{0 < X < X_0\}} (1-\cos X)^{-\frac{1}{5}} dk + \int_{\{|X-2\pi| < X_0\}} (1-\cos X)^{-\frac{1}{5}} dk \leq M,
\end{equation*}
and this shows the lower bound of \eqref{int}.
\end{proof}

\begin{proof}[Proof of 
\Cref{gwp_divergence}]
The proof strategy is as follows. As \cite{stefanov2005asymptotic} in their analysis of DNLS, derive linear dispersive estimates (Strichartz estimates) uniform in $\alpha$ and apply them to establish the small data scattering \eqref{scattering}. By the fixed point argument in the Strichartz space, derive \eqref{dispersive_est}. Lastly estimate $\| U^{(\alpha)}(t) - U(t)\|$ for $t,\alpha \gg 1$ to obtain \eqref{singular_limit}.

By the discrete Fourier transform, $U^{(\alpha)}(t) f_n = \left(K_{t,\alpha} \ast f\right)_n$ where
\begin{equation*}
K_{t,\alpha}(n) := \frac{1}{2\pi} \int_{-\pi}^{\pi} e^{-it\phi(k)}dk,\ \phi(k) = 4\sum_{m=1}^\infty \frac{\sin^2\left(\frac{mk}{2}\right)}{m^{1+\alpha}} + \frac{nk}{t}.
\end{equation*}
By the Dominated Convergence Theorem,
\begin{equation*}
    \phi^{\prime\prime}(k) \xrightarrow[\alpha \rightarrow \infty]{} 2\cos(k),\ \phi^{\prime\prime\prime}(k) \xrightarrow[\alpha \rightarrow \infty]{} -2\sin(k), 
\end{equation*}
and therefore $\max\limits_{k \in [-\pi,\pi]} \left(|\phi^{\prime\prime}(k)|,|\phi^{\prime\prime\prime}(k)|\right) \gtrsim 1$ for $\alpha$ sufficiently large. In fact, the bound holds as long as $\alpha \geq \alpha_0 > 3$ under which the term-by-term differentiation of $\phi^{\prime\prime\prime}$ holds uniformly. By the Van der Corput Lemma \cite[p.334]{stein1993harmonic}, this implies
\begin{equation}\label{endpoint_interp}
    \sup_{n \in \mathbb{Z}}|K_{t,\alpha}(n)| \lesssim_{\alpha_0} (1+|t|)^{-\frac{1}{3}},
\end{equation}
where the inhomogeneous bound follows from applying the triangle inequality to $K_{t,\alpha}$. As a corollary, we obtain
\begin{equation}\label{strichartz2}
    \| U^{(\alpha)}(t)f \|_{l^s} \lesssim (1+|t|)^{-\frac{s-2}{3s}} \| f \|_{l^{s^\prime}},
\end{equation}
by the complex interpolation of linear operators where $s \in [2,\infty]$ and $s^\prime := \frac{s}{s-1}$; the unitarity of $U^{(\alpha)}(t)$ and \eqref{endpoint_interp} provide the endpoint estimates at $s = 2,\infty$, respectively. Let $(q,r) \in [2,\infty]^2$ satisfy $\frac{3}{q} + \frac{1}{r} = \frac{1}{2}$, and say that such pair is DNLS-admissible. A further corollary \cite[Theorem 1.2]{keel1998endpoint} implies
\begin{equation}\label{strichartz}
\begin{split}
\| U^{(\alpha)}(t) f \|_{L^q_t l^r(I\times \mathbb{Z})} &\lesssim \| f \|_{l^2},\\
\left|\left| \int_0^\infty U^{(\alpha)}(-t^\prime) N(t^\prime)dt^\prime \right|\right|_{l^2} &\lesssim \| N \|_{L^{\Tilde{q}^\prime}_t l^{\Tilde{r}^\prime}(\mathbb{R}\times \mathbb{Z})},\\
\left|\left| \int_0^t U^{(\alpha)}(t-t^\prime) N(t^\prime)dt^\prime \right|\right|_{L^q_t l^r(I\times \mathbb{Z})} &\lesssim \| N \|_{L^{\Tilde{q}^\prime}_t l^{\Tilde{r}^\prime}(I\times \mathbb{Z})},
\end{split}
\end{equation}
where $(\Tilde{q},\Tilde{r})$ is DNLS-admissible and $I \subseteq \mathbb{R}$. Note that the implicit constants depend on the DNLS-admissible pairs and $\alpha_0$.

The solution in the integral form satisfies
\begin{equation}\label{duhamel}
    u^{(\alpha)}(t) = U^{(\alpha)}(t)f - i\mu \int_0^t U^{(\alpha)}(t-t^\prime)\left(|u^{(\alpha)}(t^\prime)|^{p-1}u^{(\alpha)}(t^\prime)\right)dt^\prime.
\end{equation}
Let $X = C(\mathbb{R};l^2(\mathbb{Z})) \cap L^{\frac{6p}{5}}_t l^{\frac{2p}{p-5}}(\mathbb{R}\times\mathbb{Z})$ and let $\Gamma u$ be the RHS of \eqref{duhamel}. By the fixed point argument, $\Gamma$ has a unique fixed point in $X$ if $\| f \|_{l^2} = O(1)$ where the bound depends only on $\alpha_0$ since the Strichartz constant of \eqref{strichartz2} can be chosen uniformly. Then
\begin{equation*}\label{scattering2}
\begin{split}
\left|\left|\int_{t_1}^{t_2} U^{(\alpha)}(-t^\prime)\left(|u^{(\alpha)}(t^\prime)|^{p-1}u^{(\alpha)}(t^\prime)\right)dt^\prime \right|\right|_{l^2} \lesssim \| u^{(\alpha)}\|_{L^{\frac{6p}{5}}_t l^{p}([t_1,t_2]\times\mathbb{Z})}^p\leq \| u^{(\alpha)}\|_{L^{\frac{6p}{5}}_t l^{\frac{2p}{p-5}}([t_1,t_2]\times\mathbb{Z})}^p \xrightarrow[t_1,t_2 \rightarrow \infty]{} 0, 
\end{split}
\end{equation*}
where the first inequality is by \eqref{strichartz}, the second inequality by the H\"older's inequality and $p \geq 7$, and the last limit by $\| u^{(\alpha)}\|_{L^{\frac{6p}{5}}_t l^{\frac{2p}{p-5}}([t_1,t_2]\times\mathbb{Z})} \leq \| u^{(\alpha)}\|_{L^{\frac{6p}{5}}_t l^{\frac{2p}{p-5}}(\mathbb{R}\times\mathbb{Z})} \lesssim \| f \|_{l^2} < \infty$ with $t_1 \leq t_2$ without loss of generality. This shows the existence of $u_+^{(\alpha)}  = \lim\limits_{t\rightarrow\infty} U^{(\alpha)}(-t)u^{(\alpha)}(t) \in l^2(\mathbb{Z})$ and by \eqref{duhamel},
\begin{equation}\label{duhamel2}
    u^{(\alpha)}(t) = U^{(\alpha)}(t) u^{(\alpha)}_+ + i\mu \int_t^{\infty} U^{(\alpha)}(t-t^\prime)\left(|u^{(\alpha)}(t^\prime)|^{p-1}u^{(\alpha)}(t^\prime)\right)dt^\prime. 
\end{equation}

To show that the inverse of data-to-asymptotic-state (wave operator) is continuous, we argue by the fixed point theorem on $X_T = C([T,\infty);l^2(\mathbb{Z})) \cap L^{\frac{6p}{5}}_t l^{\frac{2p}{p-5}}([T,\infty)\times\mathbb{Z})$ where $\Gamma^\prime u$ is defined as the RHS of \eqref{duhamel2} and $T>0$ is chosen such that $\| U^{(\alpha)}(t) u_+^{(\alpha)}\|_{L^{\frac{6p}{5}}_t l^{\frac{2p}{p-5}}([T,\infty)\times\mathbb{Z})}$ is sufficiently small such that a fixed point $u^{(\alpha)} \in X_T$ can be attained. Flowing $u^{(\alpha)}$ backward in time, one obtains $f := u^{(\alpha)}(0)$. Hence the desired homeomorphism follows where the isometry is due to the conservation of $l^2$ norm.

Here it is shown that the asymptotic-state map $\alpha \mapsto u_+^{(\alpha)} \in l^2(\mathbb{Z})$, with a fixed initial datum, is continuous. From \eqref{duhamel2},
\begin{equation*}
    u_+^{(\alpha)} = f - i\mu \int_0^{\infty} U^{(\alpha)}(-t^\prime)\left(|u^{(\alpha)}(t^\prime)|^{p-1}u^{(\alpha)}(t^\prime)\right)dt^\prime,
\end{equation*}
and hence it suffices to show the continuity of the integral. For a fixed $t^\prime \in (0,\infty)$, the integrand converges to $U(-t^\prime)\left(|v(t^\prime)|^{p-1}v(t^\prime)\right) \in l^2(\mathbb{Z})$ as $\alpha \rightarrow \infty$; the argument of \Cref{gwp_convergence} may be modified to show pointwise (in $t^\prime$) continuity in $\alpha$. One may adopt the argument of \cite[Theorem 4]{stefanov2005asymptotic} verbatim to derive the decay estimate \eqref{dispersive_est}. The fDNLS obeys the same Strichartz estimates as DNLS, and therefore the nonlinear decay for $\| f \|_{l^{\frac{5}{4}}} < \delta_2$, for some $\delta_2(\alpha_0)>0$, follows where $\alpha \geq \alpha_0$ guarantees the uniformity in $\alpha$. Applying \eqref{dispersive_est} and the unitarity, we have
\begin{equation}\label{duhamel3}
\int_0^\infty \| |u^{(\alpha)}|^{p-1}u^{(\alpha)}\|_{l^2} dt^\prime = \int_0^\infty \| u^{(\alpha)} \|_{l^{2p}}^p dt^\prime \leq \int_0^\infty \| u^{(\alpha)} \|_{l^{5}}^p dt^\prime \lesssim_{\alpha_0} \int_0^\infty (1+|t^\prime|)^{-\frac{p}{5}} \| f\|_{l^{\frac{5}{4}}}^p dt^\prime < \infty,    
\end{equation}
and therefore $\alpha \mapsto u_+^{\alpha}$ is continuous by the Dominated Convergence Theorem.

Lastly, \eqref{singular_limit} is proved by contradiction. Let $0 < \| f \|_{l^{\frac{5}{4}}} < \delta_2$. Let $v_+ \in l^2(\mathbb{Z})\setminus \{0\}$ be the asymptotic state corresponding to $f$ under the DNLS flow. Let $0< \epsilon \ll \| \widehat{v_+} \|_{L^{1/3}} $. By the contradiction hypothesis, there exists a sequence $\{\alpha_j\}_{j \in \mathbb{N}}$ such that $\alpha_j \xrightarrow[j \rightarrow \infty]{} \infty$ and $\sup\limits_{t \in [0,\infty),\ j \in \mathbb{N}}\| u^{(\alpha_j)}(t) - v(t)\|_{l^2} < \frac{\epsilon}{4}$. By continuity, we have $\| u_+^{(\alpha_j)} - v_+\|_{l^2} < \frac{\epsilon}{4}$ for all sufficiently large $j$ (say $j > j_0$ for some $j_0 \in \mathbb{N}$). Let $\alpha \in \{\alpha_j\}_{j > j_0}$ and $\alpha > \max (\alpha_0^*,\alpha_0)$ where $\alpha_0^* > 0$ is from \Cref{lem}. The scattering results imply that there exists $T>0$ such that if $t \geq T$, then
\begin{equation*}
    \| u^{(\alpha)}(t) - U^{(\alpha)}(t)u_+^{(\alpha)}\|_{l^2}, \| v(t) - U(t)v_+\|_{l^2} < \frac{\epsilon}{4}.
\end{equation*}
Note that this $T>0$ is uniform in $\alpha$, and depends only on $\epsilon>0$; the residual $\| u^{(\alpha)}(t) - U^{(\alpha)}(t) u_+^{(\alpha)}\|_{l^2}$ from \eqref{duhamel2} is bounded above by the RHS of \eqref{duhamel3} with the lower bound of the integral replaced by $T$. Then, the triangle inequality yields
\begin{equation*}
    \| (U^{(\alpha)}(t) - U(t))v_+\|_{l^2} - \| u_+^{(\alpha)} - v_+\|_{l^2} - \| u^{(\alpha)}(t) - U^{(\alpha)}(t)u_+^{(\alpha)}\|_{l^2} - \| v(t) - U(t)v_+\|_{l^2} \leq \| u^{(\alpha)}(t) - v(t)\|_{l^2}. 
\end{equation*}

Since $\alpha_j \xrightarrow[j \rightarrow \infty]{}\infty$, we may choose $\alpha$ such that $2^\alpha > T$. By \Cref{lem} and the triangle inequality above,
\begin{equation*}
    \| \widehat{v_+} \|_{L^{1/3}} \lesssim \| (U^{(\alpha)}(t_\alpha) - U(t_\alpha))v_+\|_{l^2} \leq \sup_{t \in [T,\infty)}\| (U^{(\alpha)}(t) - U(t))v_+\|_{l^2} < \epsilon,  
\end{equation*}
a contradiction.
\end{proof}

\section{Modulational instability of CW solutions}\label{MI}

Here we focus on periodic solutions in the direction of propagation (in $t$) that do not decay in space. Hence there is no localization, initially, but MI triggers the formation of coherent states, a process that requires a further study. For $A>0$, define $u_{n}^{cw} = Ae^{iA^2 t}$ as the continuous wave (CW) solution and consider $u_n = (A + \mu v_n (t))e^{iA^2 t}$ where $v_n(t) \in \mathbb{C}$ and $|\mu|\ll 1$. 

\begin{proposition}\label{modulational_instability}
    The CW solution is linearly unstable if and only if $\Phi :=$ $\epsilon\sum\limits_{m=1}^{\infty}J_{m}(1-\cos(km)) - A^2 < 0$. The zero set $\{\Phi = 0\}$ is the graph of $A = A(k,\alpha,\epsilon)$ where $A$ is analytic on $\mathbb{T} \setminus \{0\} \times (0,\infty) \times (0,\infty)$. Furthermore there exists $C=C(\alpha)>0$ such that
    \begin{equation*}
    A \sim \left(\epsilon C \delta(k)\right)^{\frac{1}{2}} \text{as}\ k \rightarrow 0,    
    \end{equation*}
    where
    \begin{equation*}
        \delta(k)=
        \begin{cases}
            |k|^\alpha, &\alpha \in (0,2),\\
            (-\log |k|)|k|^2, &\alpha = 2,\\
            |k|^2, &\alpha \in (2,\infty].
        \end{cases}
    \end{equation*}
\end{proposition}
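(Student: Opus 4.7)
The plan is to (i) linearize around the CW state to obtain a Bogoliubov-type dispersion relation and read off the instability criterion as a sign condition on $\Phi$, (ii) solve $\Phi=0$ for $A$ in closed form and verify analyticity via the Weierstrass M-test, and (iii) extract the leading asymptotics of the resulting series by a case-split at $m\sim 1/|k|$.

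Substituting $u_n=(A+\mu v_n)e^{iA^2 t}$ into \eqref{generalmodel} and retaining the $O(\mu)$ terms yields the linearized evolution
\begin{equation*}
i\dot v_n = \epsilon \mathscr{L}_\alpha v_n - A^2(v_n + \overline{v_n}).
\end{equation*}
Since $\mathscr{L}_\alpha$ is a Fourier multiplier with symbol $\Omega(k):=2\sum_{m\geq 1}J_m(1-\cos(km))$, the ansatz $v_n(t) = a e^{i(kn-\omega t)} + b e^{-i(kn-\bar\omega t)}$ produces, after matching harmonics and conjugating the second equation, a $2\times 2$ homogeneous system in $(a,\bar b)$ whose determinantal condition simplifies to
\begin{equation*}
\omega^2 = \epsilon\Omega(k)\bigl(\epsilon\Omega(k)-2A^2\bigr) = 4\Phi(\Phi+A^2).
\end{equation*}
Because $\Phi+A^2 = \tfrac{\epsilon}{2}\Omega(k)\geq 0$, the perturbation has a non-real frequency, hence grows exponentially, exactly when $\Phi<0$, which proves the first assertion.

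For the threshold $\Phi=0$, one inverts explicitly to obtain $A(k,\alpha,\epsilon)=\sqrt{\epsilon S(k,\alpha)}$ with
\begin{equation*}
S(k,\alpha):=\sum_{m=1}^\infty \frac{1-\cos(km)}{m^{1+\alpha}},
\end{equation*}
which is strictly positive on $(\mathbb{T}\setminus\{0\})\times(0,\infty)$ (the $m=1$ summand alone contributes $(1-\cos k)>0$). Each term is entire in $(k,\alpha)$, and on compact subsets of $\{\alpha\geq\alpha_0>0\}$ the M-test with majorant $2/m^{1+\alpha_0}$ gives joint analyticity of $S$; the factor $\sqrt{\epsilon}$ and the positive square root preserve analyticity on the stated product domain.

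For the $k\to 0$ asymptotics, split the sum at $m\sim 1/|k|$. For $\alpha\in(0,2)$, a change of variable $x=km$ together with Taylor on $m\leq 1/|k|$ and the $\ell^\infty$ bound on the tail yields the Riemann-sum convergence $|k|^{-\alpha}S(k,\alpha)\to C(\alpha):=\int_0^\infty (1-\cos x)\,x^{-(1+\alpha)}\,dx$, which is finite precisely because $\alpha<2$. For $\alpha>2$, the Taylor series converges absolutely to $\tfrac{k^2}{2}\zeta(\alpha-1)$ with residual $O(k^{\min(4,\alpha)})=o(k^2)$, giving $C(\alpha)=\zeta(\alpha-1)/2$. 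For the borderline $\alpha=2$, only partial Taylor is admissible: $\tfrac{k^2}{2}\sum_{m\leq 1/|k|}m^{-1}\sim -\tfrac{k^2}{2}\log|k|$, with both tail and Taylor remainder $O(k^2)$. Taking square roots delivers $A\sim(\epsilon C(\alpha)\delta(k))^{1/2}$ in each regime. The main technical obstacle will be the uniform control of the error across the splitting scale $m\sim 1/|k|$, especially at $\alpha=2$ where the logarithmic correction requires matching the discrete harmonic partial sum to its divergent continuum counterpart while keeping the tail and Taylor remainders strictly at the same order $k^2$.
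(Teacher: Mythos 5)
Your derivation of the instability criterion is the same in substance as the paper's: you use the complex Bogoliubov ansatz $a e^{i(kn-\omega t)}+\bar b e^{-i(kn-\bar\omega t)}$ where the paper splits $v_n=f_n+ig_n$ and Fourier-transforms the resulting real $2\times 2$ system; both yield $\omega^2=4\Phi(\Phi+A^2)$ and the sign condition $\Phi<0$. Where you genuinely diverge is in parts (ii) and (iii). For the zero set, the paper does not invert the series explicitly; it applies the analytic implicit function theorem to $\Phi=0$ using $\partial_A\Phi=-2A\neq 0$ for $k\neq 0$, whereas you solve $A=\sqrt{\epsilon S(k,\alpha)}$ and argue analyticity of $S$ directly. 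For the asymptotics, the paper simply writes $\tfrac{A^2}{\epsilon C\delta(k)}=\tfrac{\sum_m J_m(1-\cos(km))}{C\delta(k)}\to 1$ and cites Lemma A.1 of Kirkpatrick--Lenzmann--Staffilani for the limit, while you re-derive that lemma by splitting the sum at $m\sim 1/|k|$ (Riemann sum for $\alpha\in(0,2)$, Taylor for $\alpha>2$, harmonic partial sum at $\alpha=2$). Your route is more self-contained and identifies the constants $C(\alpha)$ explicitly; the paper's is shorter and, via the cited lemma, covers the general $\alpha$-kernel with $m^{1+\alpha}J_m\to A_\alpha$ rather than the special case $J_m=m^{-(1+\alpha)}$ you implicitly assume (for a general kernel your constants become $A_\alpha\int_0^\infty(1-\cos x)x^{-(1+\alpha)}dx$ and $\tfrac12\sum_m m^2J_m$, and the splitting argument goes through with the limit property in place of the exact power law).

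One caveat on your analyticity step: the Weierstrass M-test with majorant $2/m^{1+\alpha_0}$ gives uniform convergence for \emph{real} $k$, hence continuity and analyticity in $\alpha$ and $\epsilon$, but it does not by itself give analyticity in $k$, since for $k$ in a complex neighborhood the terms $1-\cos(km)$ grow like $e^{m|\Im k|}$ and the majorant fails. Analyticity in $k$ away from $k=0$ holds (e.g.\ because $\sum_m m^{-(1+\alpha)}\cos(km)=\Re\,\mathrm{Li}_{1+\alpha}(e^{ik})$ and the polylogarithm is analytic off $z=1$, or via Abel summation/an integral representation), but it needs one of these arguments rather than the M-test. To be fair, the paper's appeal to the analytic implicit function theorem presupposes the same analyticity of $\Phi$ in $k$ without proof, so both treatments leave this point implicit; yours just names an insufficient justification where the paper names none.
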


\begin{figure}[h!]
\begin{subfigure}[h]{0.38\linewidth}
\includegraphics[width=\linewidth]{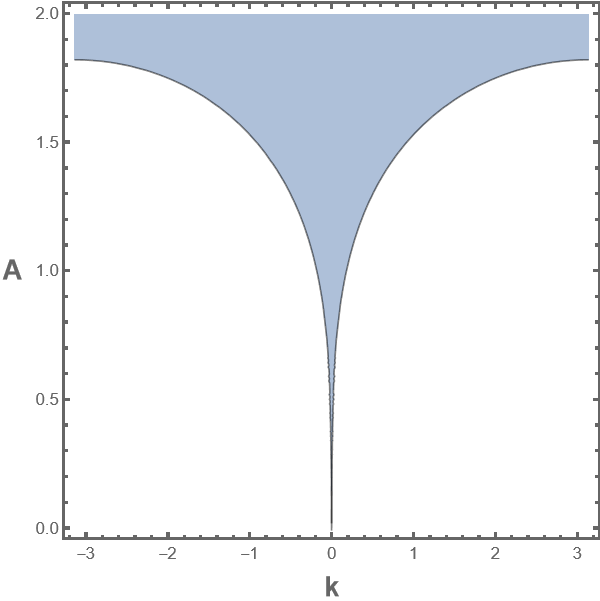}
\caption{$\alpha = 0.5$}
\end{subfigure}
\hfill
\begin{subfigure}[h]{0.38\linewidth}
\includegraphics[width=\linewidth]{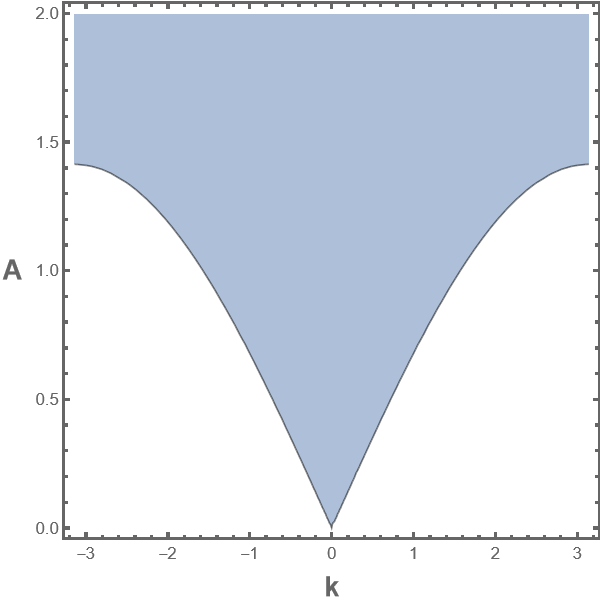}
\caption{$\alpha=50$}
\end{subfigure}

\begin{subfigure}[h]{0.38\linewidth}
\includegraphics[width=\linewidth]{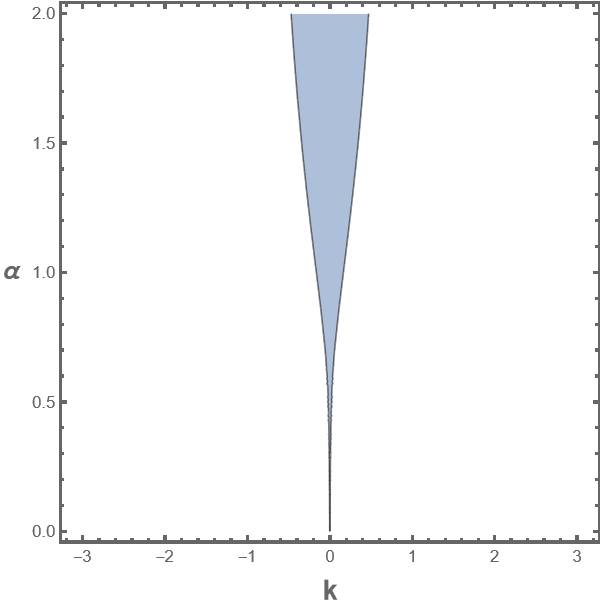}
\caption{$A = 0.5$}
\end{subfigure}
\hfill
\begin{subfigure}[h]{0.38\linewidth}
\includegraphics[width=\linewidth]{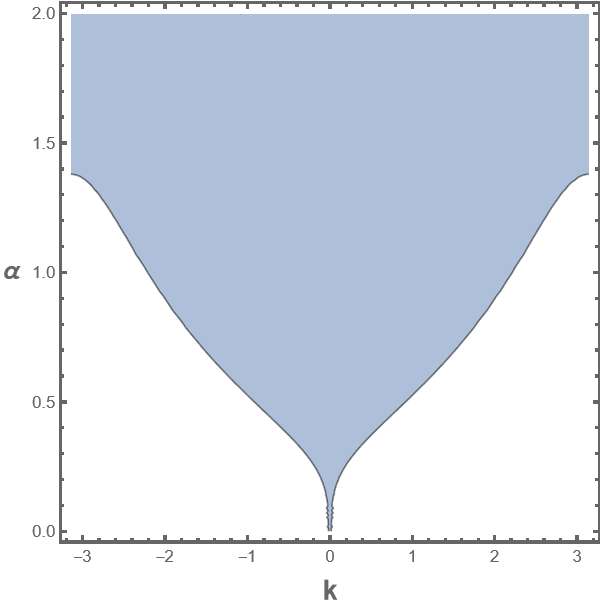}
\caption{$A=1.5$}
\end{subfigure}%
\caption{Instability regions (blue) for  $J_n = n^{-(1+\alpha)}$ in the $(k,A)$-plane and $(k,\alpha)$-plane where $k\in [-\pi,\pi]$ and $\epsilon = 1$.}
\label{fig:instability_region}
\end{figure}

\begin{proof}
The $\mathcal{O}(\mu)$ expansion yields
\begin{equation}\label{perturb}
i\partial_t v_n =  \epsilon\mathscr{L}_\alpha v_n  - A^{2}(v_n + \overline{v_n}),    
\end{equation}
and setting $v_n = f_n + ig_n := \Re v_n + i \Im v_n$, \eqref{perturb} is equivalent to
\begin{equation}\label{perturb2}
\frac{d}{dt}
\begin{pmatrix}
    f_n \\ g_n
\end{pmatrix}
=
\begin{pmatrix}
    0 & \epsilon \mathscr{L}_\alpha\\
    -\epsilon \mathscr{L}_\alpha + 2A^2 & 0
\end{pmatrix}
\begin{pmatrix}
    f_n \\ g_n
\end{pmatrix}.
\end{equation}
Taking the Fourier transform $F(k,t) = \mathcal{F}[f_n(t)](k),\ G(k,t) = \mathcal{F}[g_n(t)](k)$ and the ansatz
\begin{equation*}
\begin{pmatrix}
        F\\G
    \end{pmatrix}
    =
    \begin{pmatrix}
        P(k)\\Q(k)
    \end{pmatrix}
    e^{-i \Omega t},    
\end{equation*}
\eqref{perturb2} reduces to
\begin{equation}\label{perturb3}
    \begin{pmatrix}
        i\Omega & 2\epsilon \sum\limits_{m=1} J_m (1-\cos km)\\
        -\left(2\epsilon \sum\limits_{m=1} J_m (1-\cos km) - 2A^2\right) & i\Omega
    \end{pmatrix}
    \begin{pmatrix}
        P\\Q
    \end{pmatrix}
    =
    \begin{pmatrix}
        0\\0
    \end{pmatrix},
\end{equation}
where a nontrivial solution to \eqref{perturb3} exists if and only if $\Omega$ satisfies the dispersion relation
\begin{equation}\label{dispersion_relation}
    \Omega^2 = 4 \left (\epsilon \sum_{m=1}^{\infty}J_m (1-\cos(km)) \right ) \left (\epsilon\sum_{m=1}^{\infty}J_{m}(1-\cos(km)) - A^2 \right ).
\end{equation}
The frequency $\Omega$ is purely imaginary if and only if $\epsilon\sum\limits_{m=1}^{\infty}J_{m}(1-\cos(km)) < A^2$, leading to modulational instability.

Suppose $\Phi = 0$ and note that $\partial_A \Phi = -2A$. Since $A = 0$ if and only if $k=0$, $A(k,\alpha,\epsilon)$ is analytic whenever $k \neq 0$ by the analytic Implicit Function Theorem (IFT). In the neighborhood of $k=0$, \cite[Lemma A.1]{kirkpatrick2013continuum} implies that there exists $C(\alpha)>0$ such that
\begin{equation*}
    \frac{A^2}{\epsilon C \delta(k)} = \frac{A^2}{\epsilon C \delta(k)}\cdot \frac{\sum\limits_{m=1}^{\infty}J_{m}(1-\cos(km))}{\sum\limits_{m=1}^{\infty}J_{m}(1-\cos(km))} = \frac{\sum\limits_{m=1}^{\infty}J_{m}(1-\cos(km))}{C \delta(k)} \xrightarrow[k\rightarrow 0]{}1.  
\end{equation*}
\end{proof}

When $J_n = n^{-(1+\alpha)}$, \Cref{fig:instability_region} shows the region of instability. Note the non-analytic dependence of $A,\alpha$ on $k$ near the zero wavenumber. The instability region expands for bigger values of $A$, which is consistent with the $-A^2$ term in \eqref{dispersion_relation}.

To compute the value(s) of $k \in [-\pi,\pi]$ that maximizes the exponential gain of the modulational instability, the RHS of \eqref{dispersion_relation} needs to be minimized under the condition $\Phi < 0$; that a minimum exists follows from the Extreme Value Theorem since $\Omega(k)^2$ is continuous by the definition of $(J_n)_{n=1}^\infty$. An explicit computation that minimizes $\Omega(k)^2$ is shown for a specific interaction kernel. 

\begin{figure}[ht]
\centering
\includegraphics[width = 0.3\textwidth]{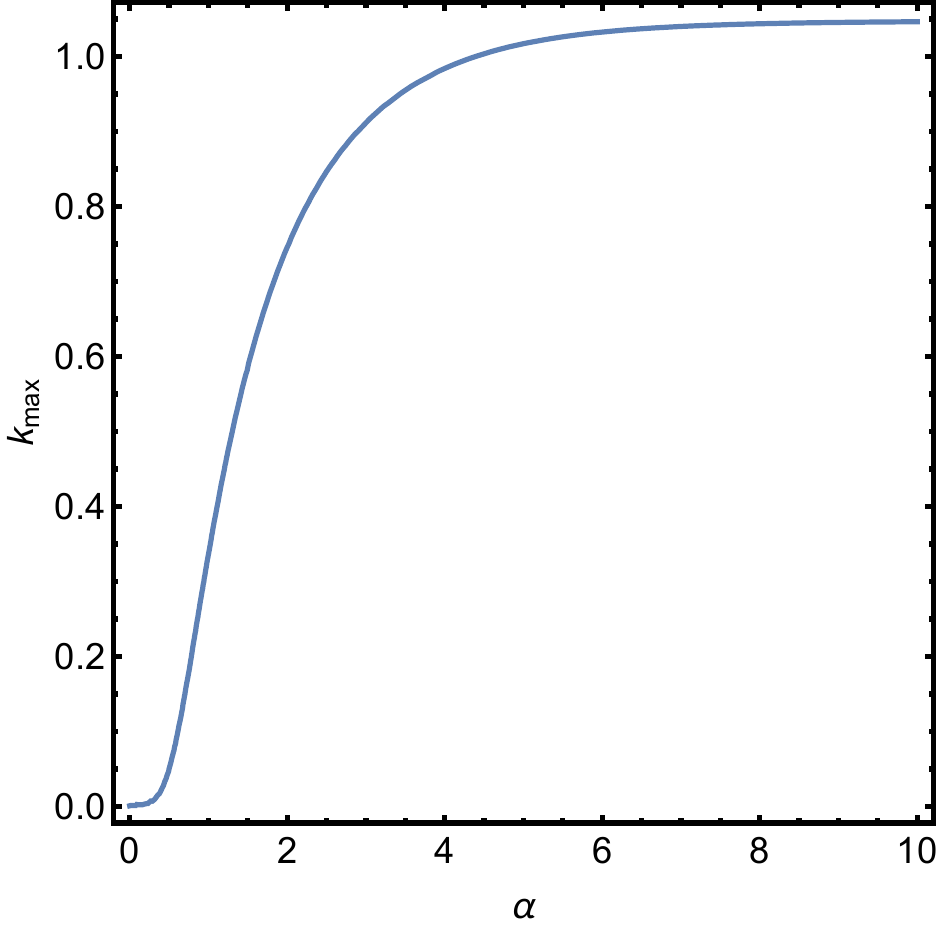}
\caption{$k_{max}$ as a function of $\alpha$ when $J_n = n^{-(1+\alpha)}$. The values $A=1, \epsilon =1$ are used to generate this plot. For such parameters, $A_0 \geq 2 > A$ where $A_0$ is defined in \Cref{cor:2.1}, and therefore \eqref{wavenumber} is used to solve $k_{max}$ in $\alpha$. Observe that small $\alpha$ gives small $k_{max}$, manifested in the top left plot of \Cref{fig:5}. That $k_{max}$ has an upper bound is numerically verified in \Cref{fig:5} for increasing values of $\alpha$.}
\label{fig:k_vs_alpha}
\end{figure}

\begin{corollary}\label{cor:2.1}
For $\alpha,\epsilon >0$, let $J_n = n^{-(1+\alpha)},\ \Tilde{w}(k) = \sum\limits_{m=1}^{\infty} \frac{1-\cos km}{m^{1+\alpha}},$ and $A_0 := \left(4\epsilon (1-2^{-(1+\alpha)})\zeta(1+\alpha)\right)^{\frac{1}{2}}$. Then for any $A \geq A_0$, 
\begin{equation*}\label{min_largeA}
    \min_{k \in [-\pi,\pi]}\Omega(k)^2 = \Omega(\pm \pi)^2 = -A_0^2 (2A^2 - A_0^2),
\end{equation*}
and if $0 < A < A_0$, then there exists a unique $k_{max} \in (0,\pi)$ that minimizes $\Omega^2$ where $k_{max}$ satisfies
\begin{equation}\label{wavenumber}
\Tilde{w}(k_{max}) = \frac{A^2}{2\epsilon}    
\end{equation}
and
\begin{equation*}\label{min_smallA}
    \min_{k \in [-\pi,\pi]}\Omega(k)^2 = \Omega(\pm k_{max})^2 = -A^4.
\end{equation*}
\end{corollary}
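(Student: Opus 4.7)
The plan is to reduce the minimization of $\Omega(k)^2$ over $k \in [-\pi,\pi]$ to a one-variable optimization of a parabola on the image of $\tilde{w}$. Setting $y = \epsilon\tilde{w}(k)$, the dispersion relation \eqref{dispersion_relation} reads
\[
\Omega^2 = h(y) := 4y(y - A^2),
\]
a parabola with unique minimum $-A^4$ at $y = A^2/2$. Thus once the range of $y$ as $k$ varies over $[-\pi,\pi]$ is identified, the corollary becomes a matter of reading off the minimum of $h$ on that range.

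To identify the range, first I would record $\tilde{w}(0) = 0$ and compute
\[
\tilde{w}(\pm\pi) = 2\sum_{m \text{ odd}} m^{-(1+\alpha)} = 2(1 - 2^{-(1+\alpha)})\zeta(1+\alpha) = \frac{A_0^2}{2\epsilon}.
\]
Next I would show that $\tilde{w}$ is strictly increasing on $(0,\pi)$, equivalently
\[
\tilde{w}'(k) = \sum_{m=1}^\infty \frac{\sin(mk)}{m^\alpha} > 0 \quad \text{on } (0,\pi),
\]
with term-by-term differentiation justified on compact subsets of $(0,\pi)$ by Abel summation. Since the coefficients $a_m = m^{-\alpha}$ are strictly convex and decrease to zero, W.H.\ Young's classical theorem on positivity of sine series with convex coefficients yields the positivity of $\tilde{w}'$. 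Together with evenness, this identifies the range of $y$ on $[-\pi,\pi]$ as precisely $[0, A_0^2/2]$, with the maximum attained only at $k = \pm\pi$.

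The two cases then follow from elementary analysis of the parabola $h$. If $A \geq A_0$, then $A^2/2 \geq A_0^2/2$, so $h$ is strictly decreasing on $[0,A_0^2/2]$; its minimum over the range is at the right endpoint $y = A_0^2/2$, attained only at $k = \pm\pi$, with value $h(A_0^2/2) = -A_0^2(2A^2 - A_0^2)$. If $0 < A < A_0$, then $A^2/2 \in (0, A_0^2/2)$, and strict monotonicity of $\tilde{w}$ yields a unique $k_{max} \in (0,\pi)$ with $\tilde{w}(k_{max}) = A^2/(2\epsilon)$, at which $\min \Omega^2 = -A^4$. The main obstacle is the monotonicity step: positivity of the convex-coefficient sine series on $(0,\pi)$ is not an elementary inequality and genuinely relies on a Young-type double summation-by-parts producing non-negative Fej\'er kernels; once granted, the remainder is routine algebra on $h$.
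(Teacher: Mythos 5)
Your proof is correct and follows essentially the same route as the paper's: both arguments hinge on the monotonicity of $\tilde{w}$ on $(0,\pi)$ together with the endpoint value $\tilde{w}(\pm\pi)=2(1-2^{-(1+\alpha)})\zeta(1+\alpha)$, and your parabola $h(y)=4y(y-A^2)$ is just a repackaging of the paper's factorization $\frac{d}{dk}\Omega^2=4\epsilon\,\tilde{w}'(k)\,(2\epsilon\tilde{w}(k)-A^2)$. The only substantive difference is that you actually justify the monotonicity (via positivity of the convex-coefficient sine series $\sum_m m^{-\alpha}\sin(mk)$), a step the paper asserts as ``direct computation.''
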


\begin{proof}
Since $\Omega(k)^2$ is even and $\Omega(0) = 0$, let $k \in (0,\pi]$ without loss of generality. By direct computation, it can be shown that $\Tilde{w}(0)=0,\ \Tilde{w} \in C^1(0,\pi)$, and $\Tilde{w}$ is increasing on $(0,\pi)$. From the derivative, we have
\begin{equation*}\label{deriv}
    \begin{split}
    \frac{d}{dk} \left(\Omega(k)^2\right) &= 4\epsilon \frac{d}{dk}\Tilde{w}(k) \left( 2\epsilon \Tilde{w}(k) - A^2 \right ),
    \end{split}
\end{equation*}
and therefore $\Omega^2$ is decreasing if and only if $2\epsilon \Tilde{w}(k) - A^2 \leq 0$. Since
\begin{equation}\label{zeta_odd}
    \max_{k \in [-\pi,\pi]} \Tilde{w}(k) = \Tilde{w}(\pm\pi) = \sum_{m \geq 1,\ m\ \text{odd}} \frac{2}{m^{1+\alpha}}= 2(1-2^{-(1+\alpha)})\zeta(1+\alpha),
\end{equation}
if $A \geq A_0$, then $2\epsilon \Tilde{w}(k) - A^2 \leq 0$ for all $k \in (0,\pi]$. Hence $\Omega^2$ is minimized at $k = \pm \pi$ and the value of $\Omega(\pm \pi)^2$ follows from \eqref{zeta_odd}. If $0 < A < A_0$, then there exists a unique $k_{max} \in (0,\pi)$ satisfying $\Tilde{w}(k_{max}) = \frac{A^2}{2\epsilon}$ by the Intermediate Value Theorem and
\begin{equation*}
    \Omega(k_{max})^2 = 4\epsilon \Tilde{w}(k_{max}) \left(\epsilon\Tilde{w}(k_{max}) - A^2\right) = -A^4.
\end{equation*}
\end{proof}
What dynamics emerges after the initial linear growth due to MI is a question that has been posed since the studies of the FPUT chain. Based on many studies in discrete and continuous models, it is expected that the discrete modulational instability is the potential mechanism for the formation of nonlinear localized modes, such as discrete breathers and envelope solitons. What is not known is which type emerges in the fNLSE and what the role of the parameter $\alpha$ is. Figure 3 shows a sample of emerging patterns triggered by numerical noise. While at this time we will not examine these emerging patterns, it is clear that $\alpha$-dependent  coherent and robust patterns develop.
\begin{figure}[H]
\begin{subfigure}[h]{0.52\linewidth}
\includegraphics[width=\linewidth]{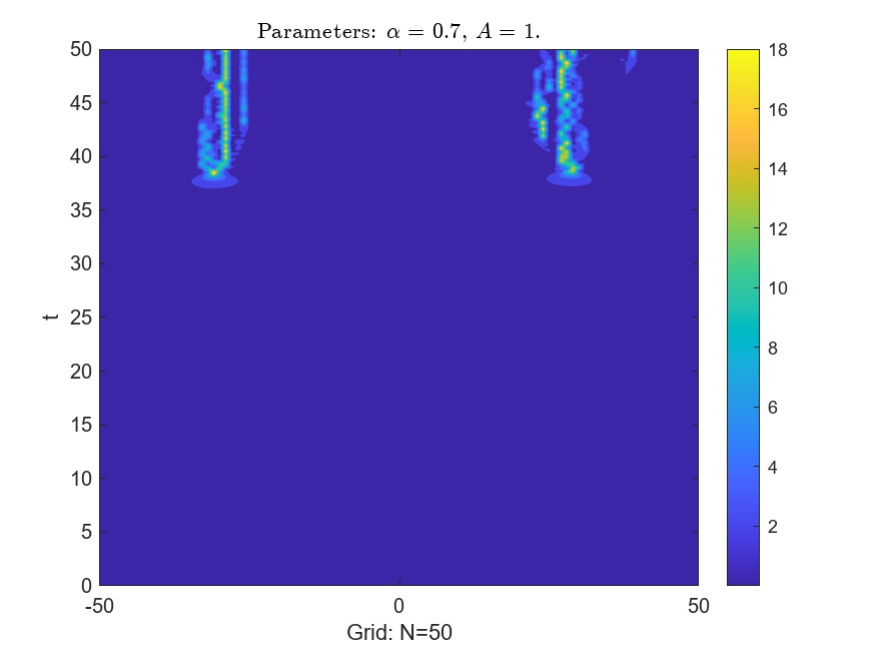}
\end{subfigure}
\begin{subfigure}[h]{0.52\linewidth}
\includegraphics[width=\linewidth]{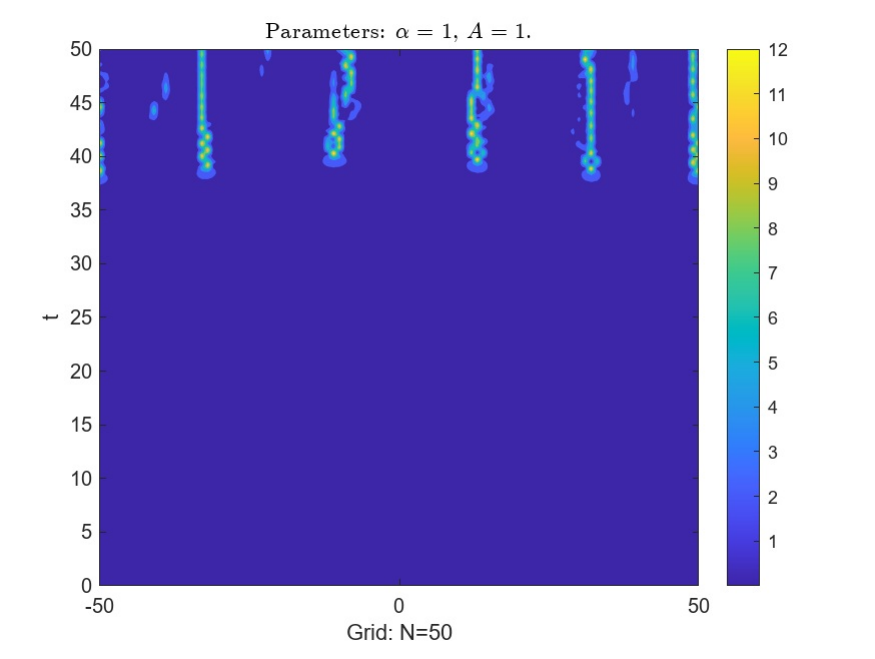}
\end{subfigure}
\begin{subfigure}[h]{0.52\linewidth}
\includegraphics[width=\linewidth]{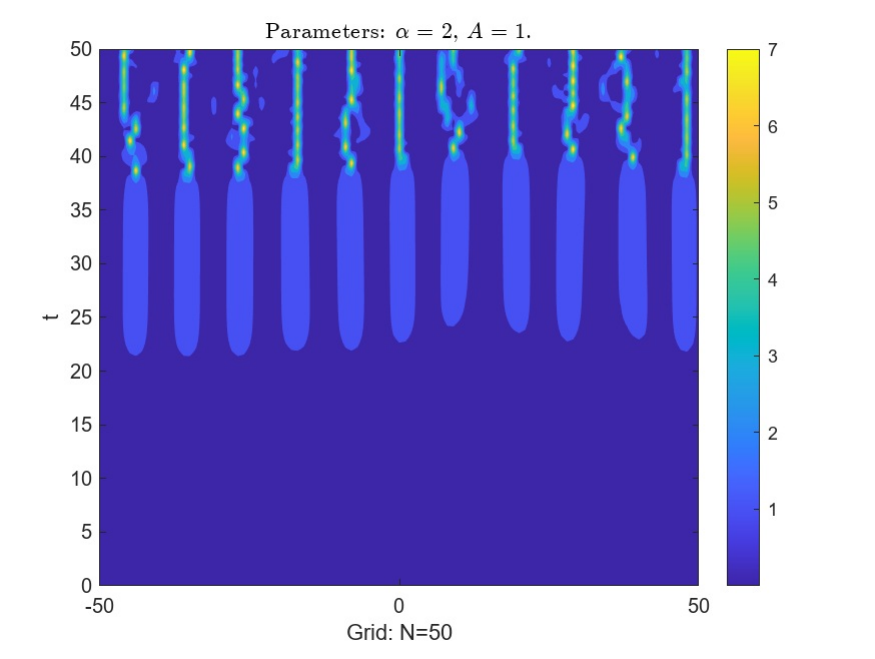}
\end{subfigure}%
\begin{subfigure}[h]{0.52\linewidth}
\includegraphics[width=\linewidth]{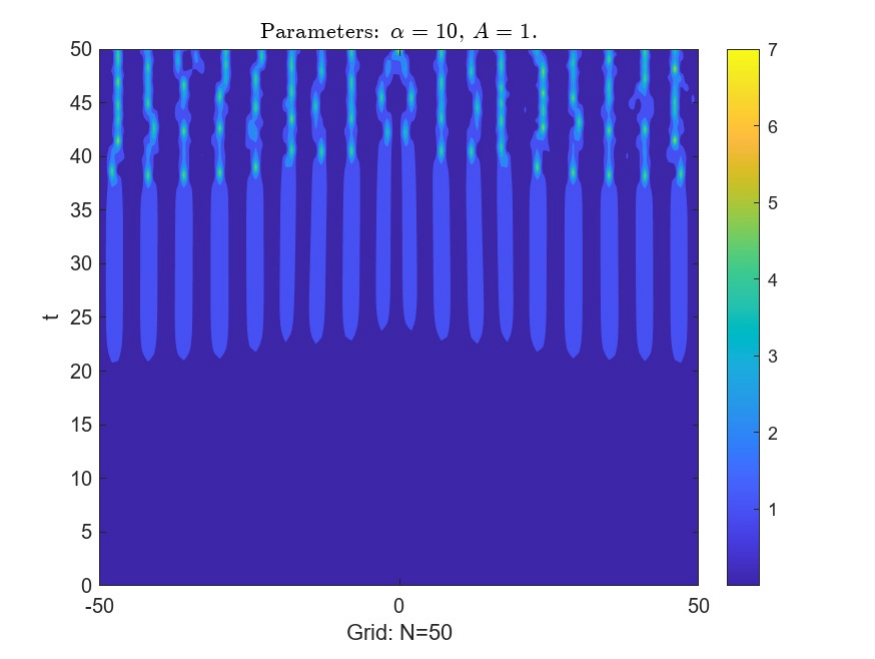}
\end{subfigure}
\caption{Emerging patterns triggered by numerical noise. Background intensity is the same in all cases, $A=1$ . The nonlinear state should be quasiperiodic, with an $\alpha$ dependence on the separation between peaks, likely to be close to the most unstable MI wavenumber $k_{max}$.}
\label{fig:5}
\end{figure}

\section{Asymptotic construction of stationary solutions}\label{solution_DNLS}

We study stationary, localized solutions of the lattice NLS in the anti-continuum (AC) regime and compare the onsite (mode A; centered on a lattice site; $q_n = q_{-n}$) and offsite (mode B; centered between two sites; $g_n = g_{-(n-1)}$) branches. Replacing $Q$ by $\sqrt{\omega} Q$, \eqref{generalmodel_stationary} reduces to $-Q_n = \rho \mathscr{L} Q_n - Q_n^3$ where $\rho := \frac{\epsilon}{\omega}$ is a small parameter. We call AC regime any path with $\rho \rightarrow 0$. We present two physically relevant scenarios. When $\epsilon \rightarrow 0$ at fixed $\omega$ (classical AC), the phonon band of $\epsilon \mathscr{L}$, the unscaled operator, has width $O(\epsilon)$ and collapses to a point with the temporal frequency $\omega$ lying outside it; when $\omega \rightarrow \infty$ at fixed $\epsilon$ (high frequency limit), $\omega$ is outside the $O(\epsilon)$ phonon band. At $\rho = 0$, let $Q^*_{A} := \delta_0$ and $Q^*_{B} := \delta_0 + \delta_1$ where $\delta_i$ is the Kronecker delta at site $i$. At those AC seeds, our Hamiltonian model satisfies the non-resonance and non-degeneracy conditions in the terminology of Mackay-Aubry \cite{mackay1994proof}, which proved the AC continuation in a general setting. Our contribution is constructive and model-specific with the following goals: 1. derive an explicit non-Markovian (infinite-memory) recurrence for fDNLS on/offsite profiles by adapting the map approach in \cite[Chapter 11]{kevrekidis}; 2. prove these sequences solve the exact fDNLS asymptotically as $\rho \rightarrow 0$; 3. use IFT to obtain the exact branches from the AC seeds and show the explicit sequences are within the $O(\rho)$-neighborhood in $l^2(\mathbb{Z})$; 4. state the PNB formula and analyze it in the AC regime; 5. present numerics, including parameter values where PNB vanishes, and evidence of traveling waves.

\subsection{Localized solutions for DNLS}\label{localized}

This subsection records the minimal AC template used later for the nonlocal model. To motivate \Cref{dnls_asymptotic}, consider \eqref{generalmodel_stationary} with $J_n = 1$, if $n=1$, and $J_n = 0$ otherwise. Assume $q_n \geq 0$ and $q_{n+1} \ll q_n$ for all $n \geq 0$. Neglecting $q_{\pm 1}$ yields
\begin{equation*}
\omega q_0 = -2\epsilon q_0 + q_0^3,    
\end{equation*}
and therefore $q_0 = \sqrt{\omega+2\epsilon}$. For $n \geq 1$, further assume $q_n \ll 1$, after which \eqref{generalmodel_stationary} neglecting $q_{n+1}$ and the nonlinear term yields
\begin{equation*}
    \omega q_n = \epsilon (q_{n-1} -2q_n),
\end{equation*}
and therefore $q_n = \frac{\epsilon}{(\omega+2\epsilon)} q_{n-1}$. By this method, a forward map is defined where $q_n$ depends only on $q_{n-1}$; a similar computation can be done with $\{g_n\}$. This approximation method is comparable to the fixed point approach using the Green's function in \cite{cuevas2008approximation}. This explicit construction yields a sequence $\{q_n\} \in l^2(\mathbb{Z})$ that satisfies \eqref{dnls} asymptotically in the AC regime. The proof is in \Cref{appendix}.
\begin{proposition}\label{dnls_asymptotic}
For $\omega,\epsilon>0$, let $\{q_n\}$ satisfy
\begin{equation}\label{dnls4}
q_n = \left(\frac{\epsilon}{\omega+2\epsilon}\right)^{|n|} \sqrt{\omega+2\epsilon},\ n \in \mathbb{Z}.
\end{equation}
Then $q_{n+1} = o(q_n)$ as $\rho \rightarrow 0$. Moreover, $\{q_n\}$ satisfies \eqref{generalmodel_stationary} asymptotically as $\rho \rightarrow 0$ uniformly in $n \in \mathbb{Z}$, or more precisely,
\begin{equation}\label{dnls_asymp}
\lim_{\rho \rightarrow 0} \sup_{n \in \mathbb{Z}}\left|\frac{\omega q_n}{\epsilon(q_{n+1}+q_{n-1}-2q_{n}) + q_n^3} - 1\right|= 0.
\end{equation}
\end{proposition}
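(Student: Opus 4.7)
The plan is direct substitution of the closed-form $q_n = r^{|n|}\sqrt{w+2\epsilon}$ into the stationary equation, with $r := \epsilon/(w+2\epsilon) = \rho/(1+2\rho) \to 0$ as $\rho \to 0$. The first claim $q_{n+1} = o(q_n)$ is immediate from $q_{|n|+1}/q_{|n|} = r \to 0$.

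For the main asymptotic \eqref{dnls_asymp}, I would split the computation into $n = 0$ and $|n| \geq 1$. When $|n| \geq 1$, the symmetry $q_n = q_{-n}$ and the geometric structure of $\{q_n\}$ give $q_{n+1} + q_{n-1} - 2q_n = q_n(1-r)^2/r$, so using the identity $\epsilon/r = w + 2\epsilon$ together with $(w+\epsilon)^2/(w+2\epsilon) = w + \epsilon^2/(w+2\epsilon)$ one obtains
\begin{equation*}
\epsilon(q_{n+1} + q_{n-1} - 2q_n) + q_n^3 = q_n\left[w + \frac{\epsilon^2}{w+2\epsilon} + q_n^2\right].
\end{equation*}
For $n = 0$, a direct computation using $q_{\pm 1} = r q_0$ and $q_0^2 = w + 2\epsilon$ reduces the denominator to $q_0(w^2 + 2\epsilon w + 2\epsilon^2)/(w+2\epsilon)$, which coincides with the $|n|\geq 1$ formula at $|n|=1$ and, as we will see, produces the same order of error.

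Finally, forming the ratio and subtracting one, the deviation from unity is
\begin{equation*}
\left|\frac{wq_n}{\epsilon(q_{n+1}+q_{n-1}-2q_n) + q_n^3} - 1\right| \leq \frac{\frac{\epsilon^2}{w+2\epsilon}+q_n^2}{w+\frac{\epsilon^2}{w+2\epsilon}+q_n^2} \leq \frac{2\epsilon^2}{w(w+2\epsilon)} \leq 2\rho^2,
\end{equation*}
using $q_n^2 \leq q_1^2 = \epsilon^2/(w+2\epsilon)$ for $|n|\geq 1$, with the same bound verified directly at $n=0$. Since the right-hand side is independent of $n$ and $2\rho^2 \to 0$, the uniform limit follows. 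The only delicate point is the reconciliation of the two cases: the discrete Laplacian behaves differently at the symmetry center $n=0$, but both expressions give $O(\rho^2)$ error because the quantity controlling smallness is $\epsilon^2/w^2$ rather than $q_n^2/w$ (the latter being $1+2\rho$ rather than small at $n=0$). Otherwise the proof is straightforward bookkeeping, with no analytic obstacle beyond careful algebra in the single small parameter $\rho$.
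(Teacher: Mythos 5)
Your proposal is correct and follows essentially the same route as the paper: direct substitution of the geometric sequence into the stationary equation, an explicit computation of the ratio split into the cases $n=0$ and $|n|\geq 1$, and passage to the limit $\rho \to 0$. The only (minor) added value is that you make the uniformity in $n$ fully explicit via the bound $q_n^2 \leq q_1^2 = \epsilon^2/(w+2\epsilon)$ and the resulting $n$-independent error estimate $2\rho^2$, whereas the paper leaves the uniformity of the limit of its exact formula to the reader.
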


\begin{remark}
    For $n \geq 1$, define $\{g_n\}$ as
\begin{equation}\label{dnls5}
    g_n = \left(\frac{\epsilon}{\omega+2\epsilon}\right)^{n-1} \sqrt{\omega+\epsilon},
\end{equation}
and extend to $\mathbb{Z}$ by $g_{n} = g_{-(n-1)}$. Then $g_{n+1} = o(g_n)$ as $\rho \rightarrow 0$ for all $n \geq 1$ and satisfies \eqref{dnls_asymp} with $q_n$ replaced by $g_n$.
\end{remark}

A direct computation via geometric series and \eqref{energy} yields an explicit expression for $E_{DNLS}$. Recall that $E_A,E_B$ are energies of $q_n,g_n$, respectively.

\begin{proposition}\label{energy_dnls_onoff}
Let $q_n,g_n$ be given by \eqref{dnls4}, \eqref{dnls5}, respectively. Then,
\begin{equation}\label{energy_dnls}
    \begin{split}
        E_{A}(\epsilon,\omega) &= \frac{\epsilon (\omega+\epsilon) (\omega+2\epsilon)}{\omega+3\epsilon} - \frac{(\omega+2\epsilon)^2 ((\omega+2\epsilon)^4+\epsilon^4)}{4((\omega+2\epsilon)^4-\epsilon^4)},\\
        E_{B}(\epsilon,\omega) &= \frac{\epsilon (\omega+\epsilon)^2}{\omega+3\epsilon} - \frac{(\omega+\epsilon)^2}{2\left(1-\frac{\epsilon^4}{(\omega+2\epsilon)^4}\right)}.
    \end{split}
\end{equation}
\end{proposition}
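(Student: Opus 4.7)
The plan is a direct evaluation via geometric series. Since the DNLS energy in \eqref{energy} specializes to
\begin{equation*}
E[u] = \frac{\epsilon}{2}\sum_{n \in \mathbb{Z}} (u_{n+1}-u_n)^2 - \frac{1}{4}\sum_{n \in \mathbb{Z}} u_n^4,
\end{equation*}
I would compute the kinetic and quartic pieces separately for $u_n = q_n$ and $u_n = g_n$ using the explicit expressions \eqref{dnls4} and \eqref{dnls5}. Because both sequences are pure exponentials, every sum reduces to a geometric series; the only care required is symmetry bookkeeping.

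For streamlined algebra, abbreviate $r := \frac{\epsilon}{w+2\epsilon}$, so that $q_n = r^{|n|}\sqrt{w+2\epsilon}$, and for $n \ge 1$, $g_n = r^{n-1}\sqrt{w+\epsilon}$, extended by $g_n = g_{-(n-1)}$. Two identities drive the simplification: $1-r = \frac{w+\epsilon}{w+2\epsilon}$ and $1+r = \frac{w+3\epsilon}{w+2\epsilon}$, which convert the standard series $\sum_{m\ge 0}r^{2m} = \frac{1}{(1-r)(1+r)}$ and $\sum_{m \ge 0} r^{4m} = \frac{1}{1-r^4}$ directly into the rational functions of $(\epsilon,w)$ that appear in \eqref{energy_dnls}.

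For the onsite mode, symmetry $q_n = q_{-n}$ gives $q_{n+1}-q_n = r^n(r-1)\sqrt{w+2\epsilon}$ for $n \ge 0$ and a mirrored contribution for $n \le -1$. Summing the two halves yields
\begin{equation*}
\sum_{n \in \mathbb{Z}} (q_{n+1}-q_n)^2 = 2(w+2\epsilon)(1-r)^2 \sum_{n \ge 0} r^{2n} = \frac{2(w+2\epsilon)(1-r)}{1+r},
\end{equation*}
which, after multiplying by $\epsilon/2$ and inserting the identities above, collapses to the first summand $\frac{\epsilon(w+\epsilon)(w+2\epsilon)}{w+3\epsilon}$ of $E_A$. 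The quartic piece evaluates to $\sum_n q_n^4 = (w+2\epsilon)^2 \frac{1+r^4}{1-r^4}$; substituting $r^4 = \epsilon^4/(w+2\epsilon)^4$ and multiplying by $-\tfrac14$ reproduces the second summand.

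The offsite calculation is structurally identical, with the one bookkeeping wrinkle that, because $g$ is mirrored about $n = \tfrac12$ rather than a lattice site, the central difference $g_1 - g_0$ vanishes and contributes nothing to the kinetic sum. The remaining two halves form matched geometric series giving $\sum_n (g_{n+1}-g_n)^2 = \frac{2(w+\epsilon)(1-r)}{1+r}$ and $\sum_n g_n^4 = \frac{2(w+\epsilon)^2}{1-r^4}$, and multiplication by $\epsilon/2$ and $-\tfrac14$ respectively produces the two summands of $E_B$. There is no genuine obstacle here; the only pitfall to avoid is miscounting the vanishing middle difference in the offsite case, which is what makes the numerator $(w+\epsilon)$ rather than $(w+2\epsilon)$ in the kinetic contribution of $E_B$.
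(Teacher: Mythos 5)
Your proposal is correct and matches the paper's approach: the paper gives no detailed proof, stating only that the result follows by ``a direct computation via geometric series and \eqref{energy},'' which is precisely the calculation you carry out. Your explicit bookkeeping (the mirrored halves for $q_n$, the vanishing central difference $g_1-g_0$ for the offsite mode, and the substitutions $1-r=\frac{w+\epsilon}{w+2\epsilon}$, $1+r=\frac{w+3\epsilon}{w+2\epsilon}$) checks out and reproduces both formulas in \eqref{energy_dnls}.
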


Suppose $q_n$ oscillates in time at $\omega_A$ and $g_n$ at $\omega_B$. If $q_n$ and $g_n$ are two modes of the traveling wave solution, then $N_A = N_B$. Since $N_A \sim \omega_A,\ N_B \sim 2\omega_B$ for $\rho_A,\rho_B \ll 1$, assume $\omega_A = 2\omega_B$. As \cite[Equation 9]{KivCam}, the Peierls-Nabarro barrier is defined as the energy difference, at fixed power, of the two modes at $\omega_A, \omega_B$, which are given by \eqref{energy_dnls}.

\begin{corollary}\label{pnb_dnls}
Let $\Delta E_{AB} = E_A(\epsilon,\omega_A) - E_B(\epsilon,\omega_B)$ where $\omega_A = 2\omega_B$. Setting $\epsilon = k \omega_A$ for $k>0$, we have
\begin{equation*}
    \Delta E_{AB} = -\gamma(k) \omega_A^2,
\end{equation*}
where $\gamma(k)$ is a strictly positive rational function satisfying $\gamma(k) \xrightarrow[k\rightarrow 0+]{}\frac{1}{8}$ and $\inf\limits_{k \geq 0} \gamma(k) >0$. Therefore $\Delta E_{AB}<0$ for any $\epsilon,w_A>0$ and $\lim\limits_{(\epsilon,w_A)\rightarrow (\epsilon^*,w_A^{*})} \Delta E_{AB} = 0$ if and only if $(\epsilon^*,w_A^{*}) = (0,0)$. Furthermore as $w_A \rightarrow \infty$,
\begin{equation*}
\Delta E_{AB} \sim_\epsilon -\frac{w_A^2}{8},\ \frac{E_{A}(\epsilon,w_A)}{E_{B}(\epsilon,w_B)} \sim_\epsilon 2.
\end{equation*}
\end{corollary}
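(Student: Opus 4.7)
The plan is to use the homogeneity of $E_A,E_B$ to reduce $\Delta E_{AB}$ to a one-variable rational function, and then analyze its sign and asymptotics directly.

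First, inspection of \eqref{energy_dnls} shows that each summand of $E_A(\epsilon,w)$ and $E_B(\epsilon,w)$ is rational in $(\epsilon,w)$ and positively homogeneous of degree two: in every quotient, the total numerator degree exceeds the denominator degree by exactly two. Exploiting $E_\bullet(\lambda\epsilon,\lambda w)=\lambda^2 E_\bullet(\epsilon,w)$ with $\lambda=1/w_A$, together with $w_B=w_A/2$ and $\epsilon=kw_A$, gives
\begin{equation*}
    \Delta E_{AB}=w_A^2\bigl[E_A(k,1)-\tfrac{1}{4}E_B(2k,1)\bigr]=:-\gamma(k)\,w_A^2.
\end{equation*}
The function $\gamma$ is rational in $k$, with denominator factors $1+3k$, $1+6k$, $(1+2k)^4-k^4$, and $(1+4k)^4-16k^4$, none of which vanish for $k\ge 0$, so $\gamma$ is real-analytic on $[0,\infty)$.

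Evaluating at $k=0$ is immediate: the first summand of each $E_\bullet$ vanishes, while the self-energy pieces give $E_A(0,1)=-\tfrac{1}{4}$ and $\tfrac{1}{4}E_B(0,1)=-\tfrac{1}{8}$, whence $\gamma(0)=\tfrac{1}{8}$. To establish strict positivity of $\gamma$ on all of $[0,\infty)$, I would clear denominators and write $\gamma(k)=P(k)/Q(k)$, where $Q(k)>0$ on $[0,\infty)$ by the factorization above, and then verify $P(k)>0$ on $[0,\infty)$ either by checking that the coefficients of $P$ (as a polynomial in $k$) are all non-negative with a strictly positive constant term, or by a direct sum-of-squares style bound. \emph{This algebraic verification is the main obstacle}: after clearing denominators, $P$ has moderately high degree and sensitive cancellations occur between $E_A(k,1)$ and $\tfrac{1}{4}E_B(2k,1)$, so positivity does not follow from any single summand in isolation and requires a careful, if routine, bookkeeping computation.

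Granting $\gamma>0$, the infimum statement follows from continuity together with the leading behavior at infinity: expanding each summand one finds $E_A(k,1)\sim -\tfrac{7}{15}k^2$ and $\tfrac{1}{4}E_B(2k,1)\sim -\tfrac{1}{5}k^2$ as $k\to\infty$, so $\gamma(k)\sim \tfrac{4}{15}k^2\to\infty$. Hence $\gamma$ attains its minimum on a compact interval, and that minimum is strictly positive. For the \emph{iff} characterization of the vanishing limit, the identity $\Delta E_{AB}=-\gamma(\epsilon/w_A)w_A^2$ is decisive: if $w_A^*>0$, the limit equals $-\gamma(\epsilon^*/w_A^*)(w_A^*)^2<0$; if $w_A^*=0$ but $\epsilon^*>0$, the $k^2$-growth of $\gamma$ yields $\gamma(\epsilon/w_A)w_A^2\sim \tfrac{4}{15}\epsilon^2\to\tfrac{4}{15}(\epsilon^*)^2>0$; only the simultaneous vanishing forces the limit to zero.

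The $w_A\to\infty$ asymptotics at fixed $\epsilon$ correspond to $k\to 0^+$, so $\gamma(k)\to\tfrac{1}{8}$ gives $\Delta E_{AB}\sim -w_A^2/8$. For the ratio, the self-energy terms dominate: $E_A(\epsilon,w_A)\sim -(w_A+2\epsilon)^2/4\sim -w_A^2/4$ and $E_B(\epsilon,w_A/2)\sim -(w_A/2+\epsilon)^2/2\sim -w_A^2/8$, whence $E_A/E_B\to 2$.
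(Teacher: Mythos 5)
Your proposal follows essentially the same route the paper intends: the corollary is stated as a direct consequence of the explicit formulas \eqref{energy_dnls}, and your reduction via degree-two homogeneity to $\Delta E_{AB}=w_A^2\bigl[E_A(k,1)-\tfrac{1}{4}E_B(2k,1)\bigr]$, the endpoint value $\gamma(0)=\tfrac{1}{8}$, the growth $\gamma(k)\sim\tfrac{4}{15}k^2$ (consistent with the paper's own remark that $\Delta E_{AB}\sim-\tfrac{4}{15}h^{-4}$ as $\epsilon\to\infty$), and the limits $\Delta E_{AB}\sim -w_A^2/8$, $E_A/E_B\to 2$ are all correct. The one piece you leave unexecuted is the strict positivity of $\gamma$ on all of $(0,\infty)$, which is the actual content of $\Delta E_{AB}<0$ and of $\inf_{k\ge 0}\gamma>0$: continuity plus the two endpoint behaviors do not exclude a sign change in between (indeed $\gamma$ dips slightly below $\tfrac{1}{8}$ near $k\approx 0.1$ before growing), so the polynomial positivity check you describe is genuinely required and not merely cosmetic; since the denominators factor as $(1+2k)^4-k^4=(1+k)(1+3k)\bigl((1+2k)^2+k^2\bigr)$ and $(1+4k)^4-16k^4=(1+2k)(1+6k)\bigl((1+4k)^2+4k^2\bigr)$, it does reduce to verifying one explicit polynomial inequality, and with that carried out your argument, including the two-sided treatment of the iff characterization, is complete.
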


\subsection{Localized solutions for fDNLS}\label{fDNLS}

We study stationary solutions of fDNLS at fixed lattice spacing,
\begin{equation*}
-\omega {q}_n = \epsilon \sum_{m\in\mathbb{Z} \setminus \{n\} } \frac{q_n - q_m}{|n - m|^{1+\alpha}} - q_n^3,\ (n,t) \in \mathbb{Z} \times \mathbb{R},
\end{equation*}
while keeping the full discrete symbol $\sigma_\alpha(k)$ in \eqref{discrete_symbol}, which has a non-analytic expansion at $k=0$. This branch term forces an algebraic tail in the Green’s function and, consequently, algebraic spatial decay of on/offsite profiles (see \Cref{uniform_convergence}). By contrast, statements of exponential decay for $\alpha > 2$ (see \cite{PhysRevE.55.6141}) arise when $\sigma_\alpha(k)$ is replaced by its analytic $k^2$ Taylor expansion, a long-wave approximation that captures the core but is not uniform in the far-field on the fixed lattice; similarly in \cite{flach1998breathers}, the authors showed, via a similar nonlocal model, the existence of the transitional lattice site $n_c \in \mathbb{N}$ such that the solution decays algebraically in the far-field regime $|n| \gg n_c$ and $\alpha > 2$. 

We construct and justify nonlocal on/off-site profiles, prove $O(\rho)$ proximity to the exact branches, and use them to analyze the PNB and its numerical consequences. To this end, define
\begin{subnumcases}{}
   q_0 = \sqrt{\omega_A + 2 \epsilon J},\ q_n = \frac{\epsilon\left(J_n q_0 + \sum\limits_{m=1}^{n-1} \left(J_{n-m} + J_{n+m}\right)q_m\right)}{\epsilon\left(2J - J_{2n}\right) +\omega_A}, & $n \geq 1$, \label{dnls2}
   \\
   g_0 = \sqrt{\omega_B+\epsilon(2J - J_1)},\ g_n = \frac{\epsilon\sum\limits_{m=1}^{n-1} \left(J_{n-m} + J_{n+m-1}\right)g_m}{\epsilon\left(2J - J_{2n-1}\right) + \omega_B}, & $n \geq 2$, \label{fdnls5}
\end{subnumcases}
where $J = \sum\limits_{n=1}^{\infty}J_{n}$. To provide an insight for the definitions above, the first two terms of $q_n$ are shown explicitly. For $q_0$, neglect $q_n$ for $|n| \geq 1$. Then
\begin{equation*}
-\omega_A q_0 = 2\epsilon J q_0 - q_0^3,
\end{equation*}
and hence $q_0$. For $q_1$, neglect the nonlinear term $q_1^3$ and $q_n$ where $|n| \geq 2$. Since $q_1 = q_{-1}$,
\begin{equation*}
-\omega_A q_1 = \epsilon\sum_{m \neq 1} J_{|1-m|}(q_1 - q_m) = 2\epsilon J q_1 - \epsilon\left(J_1 q_0 + J_2 q_1\right),
\end{equation*}
and hence $q_1$. Although the analytic description of the asymptotic sequences is not immediately tractable, they are given by the power series expansion when $\rho \ll 1$.
\begin{proposition}\label{fdnls_asymptotic}
Let $0 < \rho < \frac{1}{2J}$. Then there exist sequences $\{c_{nk}\}_{k \geq 1},\{d_{nk}\}_{k \geq 1} \subseteq \mathbb{R}$ such that
\begin{equation}\label{induction}
\begin{split}
\frac{q_n}{q_0} &= \sum_{k=1}^\infty c_{nk}\rho^k,\ n \geq 1\\  \frac{g_n}{g_0} &= \sum_{k=1}^\infty d_{nk}\rho^k,\ n\geq 2,  
\end{split}
\end{equation}
where the series are absolutely convergent and $c_{n1} = J_n,\ d_{n1} = J_{n-1} + J_n$.
\end{proposition}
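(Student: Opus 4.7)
The plan is to renormalize both recurrences and argue by induction on $n$ via analytic function theory. Dividing \eqref{dnls2} by $q_0$ and setting $a_n := q_n/q_0$, $\rho = \epsilon/w$, the recurrence becomes
\begin{equation*}
    \bigl(1 + \rho(2J - J_{2n})\bigr) a_n \;=\; \rho J_n + \rho \sum_{m=1}^{n-1}(J_{n-m}+J_{n+m}) a_m, \qquad a_0 = 1,
\end{equation*}
and an analogous identity holds for $b_n := g_n/g_0$ starting from $b_1 = 1$. The goal is to show that each $a_n$, regarded as a function of $\rho$, is analytic on the complex disk $|\rho|<1/(2J)$, which immediately yields an absolutely convergent Taylor expansion $a_n = \sum_k c_{nk}\rho^k$ on the real interval $(0,1/(2J))$.

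The base case is $a_1 = \rho J_1/(1+\rho(2J - J_2))$; since $0 \leq 2J - J_2 \leq 2J$, the unique pole of $a_1$ has modulus at least $1/(2J)$, so $a_1$ is analytic on the prescribed disk, and the geometric expansion of the denominator gives $c_{11} = J_1$. For the inductive step, assume $\{a_m\}_{m<n}$ are analytic on $|\rho|<1/(2J)$. Then the numerator of the recurrence is a finite linear combination, with polynomial-in-$\rho$ coefficients, of analytic functions, hence analytic; the denominator vanishes only at $\rho = -1/(2J-J_{2n})$, whose modulus is again at least $1/(2J)$ by the bound $2J - J_{2n} \leq 2J$. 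Therefore $a_n$ is a ratio of analytic functions with nonvanishing denominator on $|\rho|<1/(2J)$, hence analytic there, and its Taylor series at $0$ converges absolutely. To extract $c_{n1}$, observe that the induction hypothesis in fact gives $a_m(\rho) = O(\rho)$ as $\rho \to 0$ for every $m\geq 1$, so the numerator expands as $\rho J_n + O(\rho^2)$ and the denominator as $1 + O(\rho)$; thus $a_n = \rho J_n + O(\rho^2)$, which identifies $c_{n1} = J_n$.

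The offsite argument is parallel: the recurrence for $b_n$ has denominator $1+\rho(2J - J_{2n-1})$ whose zero again lies outside $|\rho|<1/(2J)$, so the same induction delivers an absolutely convergent expansion $b_n = \sum_k d_{nk}\rho^k$. Isolating the $m=1$ term (where $b_1=1$ contributes at order $\rho$, while every $b_m$ with $m\geq 2$ contributes only at order $\rho^2$ by the inductive hypothesis) gives $b_n = \rho(J_{n-1}+J_n) + O(\rho^2)$, hence $d_{n1} = J_{n-1}+J_n$. The only real technical delicacy is confirming that the radius of analyticity does not shrink with $n$; this is guaranteed by the uniform bound $\max\bigl(|2J - J_{2n}|,|2J - J_{2n-1}|\bigr) \leq 2J$, so that every denominator zero encountered in the induction remains outside the fixed disk $|\rho| < 1/(2J)$, and once this is in hand the claimed expansions, their absolute convergence, and the identification of the first Taylor coefficients all follow at once.
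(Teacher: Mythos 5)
Your proof is correct and follows essentially the same route as the paper: induction on $n$, with the key observation that each denominator $1+\rho(2J-J_{2n})$ (resp. $1+\rho(2J-J_{2n-1})$) admits a convergent geometric expansion uniformly in $n$ because $0\le 2J-J_{2n}\le 2J$, followed by the same leading-order identification of $c_{n1}$ and $d_{n1}$. The only difference is cosmetic: you package the absolute convergence via analyticity of $a_n(\rho)$ on the disk $|\rho|<1/(2J)$, whereas the paper multiplies the series out directly and bounds $\sum_{k}\sum_{m}(J_{n-m}+J_{n+m})|c_{mk}|\rho^k$ by hand.
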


\begin{proof}
For brevity, our proof concerns $q_n$ only. Let 
\begin{equation*}
\gamma_n = \frac{\rho}{1+\rho(2J-J_{2n})}.    
\end{equation*}
Since $\rho$ is sufficiently small by hypothesis, $\gamma_n = \rho + O_\alpha(\rho^2)$ by series expansion where the error term is uniform in $n$ since $|\rho(2J-J_{2n})| < 2 \rho J <1$. Let $n=1$. From \eqref{dnls2},
\begin{equation*}
\frac{q_1}{q_0} = \gamma_1 J_1 = \frac{\rho J_1}{1+\rho (2J - J_1)},   
\end{equation*}
and hence \eqref{induction} with $c_{n1} = J_1$. Suppose the claim holds for $m = 1,\dots,n-1$. Then
\begin{align}\label{taylorseries}
\frac{q_n}{q_0} &= \gamma_n J_n + \gamma_n \sum\limits_{m=1}^{n-1} \left(J_{n-m} + J_{n+m}\right)\frac{q_m}{q_0}\nonumber\\
&= \gamma_n J_n + \gamma_n \sum_{k=1}^\infty \sum_{m=1}^{n-1} \left(J_{n-m} + J_{n+m}\right) c_{mk} \rho^k, 
\end{align}
where the series is absolutely convergent since
\begin{equation*}
\sum_{k=1}^\infty \sum_{m=1}^{n-1} \left(J_{n-m} + J_{n+m}\right) |c_{mk}| \rho^k \leq J  \cdot \max_{1 \leq m \leq n-1} \left(\sum_{k=1}^\infty |c_{mk}| \rho^k \right) < \infty.   
\end{equation*}
The second term of \eqref{taylorseries} is $O(\rho^2)$, and hence the dominant term of $\frac{q_n}{q_0}$ is $\rho J_n$.   
\end{proof}

By \Cref{fdnls_asymptotic},
\begin{equation*}
\left|\frac{q_{n}/q_{0}}{\rho J_n} - 1 \right| = J_n^{-1} \left| \sum\limits_{k=2}^\infty c_{nk} \rho^{k-1}\right|  \xrightarrow[\rho \rightarrow 0]{} 0,    
\end{equation*}
and hence for any $n \in \mathbb{Z} \setminus \{0\}$,
\begin{equation}\label{convergence_rate}
    \frac{q_n}{q_0} \sim \rho J_n,\ \rho \rightarrow 0.
\end{equation}
\begin{proposition}\label{uniform_convergence}
The convergence rate of \eqref{convergence_rate} is uniform in $n$. More precisely, for all $\epsilon_1,\alpha >0$, there exists $\rho_* = \rho_*(\epsilon_1,\alpha) > 0$ such that for any $0< \rho < \rho_*$,
\begin{equation}\label{uniformbound}
\begin{split}
\frac{1}{1+\epsilon_1} &< \frac{q_n/q_0}{\rho J_{n}} < 1+\epsilon_1,\ n \geq 1,\\
\frac{1}{1+\epsilon_1} &\leq \frac{g_n/g_0}{\rho (J_{n} + J_{n-1})} \leq 1+\epsilon_1,\ n \geq 2.
\end{split}
\end{equation}
\end{proposition}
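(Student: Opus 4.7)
The plan is to work with the normalized ratio $r_n := (q_n/q_0)/(\rho J_n)$ for $n \geq 1$ (and its offsite counterpart $s_n := (g_n/g_0)/(\rho(J_{n-1}+J_n))$ for $n \geq 2$) and show that $r_n$ lies in $[(1+\epsilon_1)^{-1},\,1+\epsilon_1]$ uniformly in $n$ once $\rho < \rho_*(\epsilon_1,\alpha)$. Writing $\gamma_n := \rho/(1+\rho(2J-J_{2n}))$ and dividing \eqref{dnls2} by $q_0\rho J_n$ converts the definition into the self-referential identity
\begin{equation*}
r_n \;=\; \frac{\gamma_n}{\rho} \;+\; \frac{\gamma_n}{J_n}\sum_{m=1}^{n-1}(J_{n-m}+J_{n+m})\,J_m\,r_m,
\end{equation*}
in which $\gamma_n/\rho = (1+\rho(2J-J_{2n}))^{-1} \in [(1+2J\rho)^{-1},\,1]$ lies within $O(\rho)$ of $1$ uniformly in $n$. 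So the whole problem reduces to controlling the convolution coefficient
\begin{equation*}
S_n \;:=\; \frac{1}{J_n}\sum_{m=1}^{n-1}(J_{n-m}+J_{n+m})\,J_m
\end{equation*}
uniformly in $n$.

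The first step would be to prove $\sup_{n\geq 2} S_n \leq C(\alpha) < \infty$, using only the asymptotic $J_n \asymp n^{-(1+\alpha)}$ guaranteed by the $\alpha$-kernel hypothesis. For the off-diagonal piece $\sum J_{n+m}J_m$, the trivial bound $J_{n+m} \lesssim J_n$ reduces it to $J_n\sum_{m\geq 1}J_m$. For the self-convolution $\sum_{m=1}^{n-1}J_{n-m}J_m$, I would split at $m=\lfloor n/2\rfloor$: on each half one of $m$, $n-m$ is at least $n/2$, so the corresponding factor is $\lesssim J_n$, while the other factor is summed against the absolutely convergent tail of $\sum J_m$. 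Both pieces yield a constant multiple of $J_n$, and dividing by $J_n$ gives the claim.

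Granted the estimate on $S_n$, the upper bound is a one-line induction. Writing $M_n := \max_{1\leq m\leq n}r_m$, the identity yields $r_n \leq 1 + \rho C(\alpha)M_{n-1}$ since $\gamma_n/\rho \leq 1$ and $\gamma_n \leq \rho$; the base case $r_1 = \gamma_1/\rho \leq 1$ is immediate, and choosing $\rho$ so small that $\rho C(\alpha)(1+\epsilon_1) \leq \epsilon_1$ propagates the bound $M_n \leq 1+\epsilon_1$ to all $n$. The lower bound is essentially free: every term on the right of the recursion is nonnegative, so $r_n \geq \gamma_n/\rho \geq (1+2J\rho)^{-1}$, and this sits above $(1+\epsilon_1)^{-1}$ as soon as $\rho < \epsilon_1/(2J(1+\epsilon_1))$. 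The threshold $\rho_*$ is the smaller of these two restrictions.

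The offsite case is structurally identical. Using $g_1 = g_0$ to peel off the $m=1$ term, \eqref{fdnls5} rewrites as a fixed-point identity for $s_n$ whose kernel is $(J_{n-m}+J_{n+m-1})(J_{m-1}+J_m)$; the same splitting around $m \approx n/2$ produces a uniform analogue of $S_n$, after which the induction runs verbatim. The main obstacle is the uniform convolution bound in Step 1: this is the place where the algebraic decay of the kernel enters essentially, and it is what upgrades the merely pointwise-in-$n$ asymptotic $q_n/q_0 \sim \rho J_n$ from \Cref{fdnls_asymptotic} to the uniform-in-$n$ statement required here.
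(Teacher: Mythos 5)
Your proposal is correct and follows essentially the same route as the paper: the lower bound comes for free from the positive first term $\gamma_n J_n$ of the recursion, and the upper bound is an induction whose engine is the uniform convolution estimate $\sum_{m=1}^{n-1}(J_{n-m}+J_{n+m})J_m \lesssim_\alpha J_n$, obtained exactly as you describe by splitting the self-convolution at $m\approx n/2$ and using $J_{n+m}\lesssim J_n$ for the off-diagonal piece. The only cosmetic difference is that the paper treats the finitely many indices $2\le n\le 2N$ (where the two-sided asymptotic $J_n\asymp n^{-(1+\alpha)}$ from the $\alpha$-kernel hypothesis is not yet available) by a separate crude bound folded into the definition of $\rho_*$, a detail your appeal to $J_n\asymp n^{-(1+\alpha)}$ for all $n$ elides but which is a one-line fix.
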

\begin{proof}

The proof is for $q_n$ without loss of generality. Since $n^{1+\alpha} J_n \xrightarrow[n\rightarrow \infty]{} A$, there exists $N \in \mathbb{N}$ such that for any $n \geq N$, we have $|J_n - \frac{A}{n^{1+\alpha}}|<\frac{A}{2n^{1+\alpha}}$. Define
\begin{equation*}
    \rho_* = \min\left(\frac{\epsilon_1}{2J},\min\limits_{2 \leq n \leq 2N}\left(\frac{\epsilon_1 J_n}{2(1+\epsilon_1)(n-1)J^2}\right), \frac{\epsilon_1}{3(1+2^{2+\alpha})(1+\epsilon_1)J}\right).
\end{equation*}

By \eqref{dnls2},
\begin{equation*}
    \frac{q_n}{q_0} \geq \gamma_n J_n > \frac{\rho J_n}{1+2\rho J} > \frac{\rho J_n}{1+\epsilon_1}, 
\end{equation*}
and hence the lower bound of \eqref{uniformbound}. The proof for the upper bound is by induction. For the base case, we have
\begin{equation*}
    \frac{q_1}{\rho J_1 q_0} = \frac{1}{1+\rho(2J-J_2)} <   1+\epsilon_1.
\end{equation*}

Let $C = (1+\epsilon_1)\rho q_0$. For $2 \leq n \leq 2N$,
\begin{equation*}
    q_n < \rho\left(J_n q_0 + C \sum_{m=1}^{n-1} (J_{n-m}+J_{n+m})J_m\right) \leq \rho \left( J_n q_0 + 2C(n-1)J^2\right) < C J_n,
\end{equation*}
since $\rho < \rho_*$. 

Let $n > 2N$. Suppose $q_m < C J_m$ for all $1 \leq m \leq n-1$. Observe that
\begin{equation*}
\begin{split}
\sum_{m=1}^{n-1} J_{n-m}q_m &< C\left(\sum_{1 \leq m \leq \frac{n}{2}} J_{n-m}J_m + \sum_{\frac{n}{2} < m \leq n-1} J_{n-m}J_m\right)\\
&< \frac{3CA}{2} \left(\sum_{1 \leq m \leq \frac{n}{2}} \frac{J_m}{(n-m)^{1+\alpha}} + \sum_{\frac{n}{2}<m\leq n-1} \frac{J_{n-m}}{m^{1+\alpha}}\right)\\
&\leq 3 \cdot 2^{1+\alpha} CAn^{-(1+\alpha)}J < 3 \cdot 2^{2+\alpha} C J J_n.
\end{split}
\end{equation*}
A similar computation yields \begin{equation*}
    \sum_{m=1}^{n-1} J_{n+m} q_m < 3CJ J_n,
\end{equation*}
and altogether,
\begin{equation*}
    q_n < \rho \left(q_0 + 3(1+2^{2+\alpha})C J \right) J_n < C J_n.
\end{equation*}
This completes the induction, and the claim for $g_n$ follows similarly.   
\end{proof}
\begin{corollary}\label{cor}
    Assuming the hypotheses of \Cref{uniform_convergence},
    \begin{equation*}    
    \frac{q_n}{q_0} \sim \frac{A\rho}{n^{1+\alpha}},\hspace{20pt} \frac{q_{n+1}}{q_n} \sim \left(\frac{n}{n+1}\right)^{1+\alpha},\hspace{20pt} \frac{q_{2n}}{q_n} \sim 2^{-(1+\alpha)},
    \end{equation*}
as $\rho \rightarrow 0$ and uniformly in $n \in \mathbb{Z}$, and similarly for $\{g_n\}$.
\end{corollary}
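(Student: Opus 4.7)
My plan is to combine the uniform two-sided bound from \Cref{uniform_convergence} with the defining asymptotic $n^{1+\alpha} J_n \to A$ of the $\alpha$-kernel via a sandwich argument; no substantial new input beyond these two ingredients is required. The key observation is that the $\rho$-dependence in the bounds of \Cref{uniform_convergence} is isolated as a multiplicative factor $\rho J_n$, so the ratios $q_{n+1}/q_n$ and $q_{2n}/q_n$ cancel $\rho$ entirely and become pure statements about the kernel.

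For the first asymptotic, I fix $\epsilon_1 > 0$ and take $\rho_* = \rho_*(\epsilon_1,\alpha)$ from \Cref{uniform_convergence}, so that for $0 < \rho < \rho_*$ and $n \geq 1$,
\[
\tfrac{1}{1+\epsilon_1}\,\rho J_n \;\leq\; \frac{q_n}{q_0} \;\leq\; (1+\epsilon_1)\,\rho J_n.
\]
Given $\eta > 0$, the $\alpha$-kernel condition provides $N = N(\eta)$ such that $|n^{1+\alpha} J_n/A - 1| < \eta$ for $n \geq N$. Writing
\[
\frac{q_n/q_0}{A\rho/n^{1+\alpha}} \;=\; \frac{q_n/q_0}{\rho J_n} \cdot \frac{n^{1+\alpha} J_n}{A},
\]
the first factor lies in $\bigl(\tfrac{1}{1+\epsilon_1},\,1+\epsilon_1\bigr)$ and the second in $(1-\eta, 1+\eta)$, which pinches the left-hand side to $1$ as $\rho \to 0$ (allowing $\epsilon_1 \to 0$) and $n \to \infty$ (allowing $\eta \to 0$).

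For the remaining two asymptotics I divide the two-sided bound with itself to eliminate $\rho$:
\[
\frac{1}{(1+\epsilon_1)^2}\cdot\frac{J_{n+1}}{J_n} \;\leq\; \frac{q_{n+1}}{q_n} \;\leq\; (1+\epsilon_1)^2 \cdot \frac{J_{n+1}}{J_n},
\]
and rewrite
\[
\frac{J_{n+1}}{J_n} \;=\; \left(\frac{n}{n+1}\right)^{1+\alpha} \cdot \frac{(n+1)^{1+\alpha} J_{n+1}}{n^{1+\alpha} J_n},
\]
where the second factor tends to $A/A = 1$. This yields $q_{n+1}/q_n \sim (n/(n+1))^{1+\alpha}$. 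The $q_{2n}/q_n$ estimate is analogous after factoring $J_{2n}/J_n = 2^{-(1+\alpha)} \cdot [(2n)^{1+\alpha} J_{2n}]/[n^{1+\alpha} J_n]$. For $g_n$, I apply the second bound of \Cref{uniform_convergence} in place of the first and use $J_{n-1}+J_n \sim 2A/n^{1+\alpha}$, which gives $g_n/g_0 \sim 2A\rho/n^{1+\alpha}$ and identical ratio asymptotics.

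There is no real obstacle here; the only point requiring a touch of care is bookkeeping the double limit so that the parameters $\epsilon_1$ (controlled by $\rho_*$) and $\eta$ (controlled by $N$) can be sent to zero independently. The proof is therefore essentially a two-factor squeeze, with \Cref{uniform_convergence} supplying the nontrivial $n$-uniform estimate and the $\alpha$-kernel limit supplying the pointwise-in-$n$ asymptotic of $J_n$.
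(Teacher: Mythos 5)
Your argument is correct and is exactly the route the paper intends: the corollary is stated without proof as an immediate consequence of \Cref{uniform_convergence} combined with the defining limit $n^{1+\alpha}J_n \to A$, and your two-factor squeeze (with $\epsilon_1$ controlled by $\rho_*$ and $\eta$ by $N$, so the two limits can be taken independently) fills in precisely those details. Nothing is missing.
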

As \Cref{dnls_asymptotic}, it is shown that \eqref{dnls2}, \eqref{fdnls5} solve \eqref{generalmodel_stationary} in an asymptotic sense.
\begin{proposition}\label{fdnls_asymptotic2}
For any $\epsilon,\omega_A,\alpha>0$, define $\{q_n\}$ by \eqref{dnls2}. Then $q_{n+1} = O(q_n)$ as $\rho \rightarrow 0$, and
\begin{equation}\label{fdnls_asymp}
\lim_{\rho \rightarrow 0} \sup_{n \in \mathbb{Z}}\left| \frac{-\omega_A q_n}{\epsilon \mathscr{L}_\alpha q_n - q_n^3} - 1\right| = 0,
\end{equation}
and similarly for $\{g_n\}$.    
\end{proposition}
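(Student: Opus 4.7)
The plan is to compute the exact residual when the sequence $\{q_n\}$ from \eqref{dnls2} is substituted into the full stationary equation $\epsilon \mathscr{L}_\alpha q_n - q_n^3 + w q_n = 0$, and then to control it uniformly in $n$ using the envelope $q_n \asymp \rho J_n q_0$ furnished by \Cref{uniform_convergence}. The secondary claim $q_{n+1} = O(q_n)$ is immediate from that same envelope: for $\rho$ sufficiently small one has $q_{n+1}/q_n \leq (1+\epsilon_1)^2 J_{n+1}/J_n$, and $J_{n+1}/J_n$ is bounded uniformly in $n$ by the limit property $n^{1+\alpha} J_n \to A$.

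For the main asymptotic, set $r_n := \epsilon \mathscr{L}_\alpha q_n - q_n^3 + w q_n$, so that the quantity inside the supremum in \eqref{fdnls_asymp} equals $|r_n|/|r_n - w q_n|$ and the claim reduces to showing $r_n = o(w q_n)$ uniformly in $n \in \mathbb{Z}$. Expanding $\mathscr{L}_\alpha q_n$ using the reflection $q_{-m} = q_m$, regrouping the sum according to $m \leq n-1$, the diagonal $m = n$, and the tail $m \geq n+1$, and then subtracting the recurrence \eqref{dnls2} term by term, I expect every coupling retained by the recurrence to cancel exactly, leaving
\[
r_n = -q_n^3 - \epsilon \sum_{m=n+1}^{\infty} \bigl( J_{n+m} + J_{m-n} \bigr) q_m, \qquad n \geq 1.
\]
The endpoint $n = 0$ is treated separately: the identity $q_0^2 = w + 2\epsilon J$ cancels the cubic against the diagonal contribution $2\epsilon J q_0$, leaving $r_0 = -2\epsilon \sum_{m \geq 1} J_m q_m$. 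Negative indices follow from $q_{-n} = q_n$.

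To close the estimate, insert the upper envelope $q_m \lesssim \rho J_m q_0$ and the lower envelope $q_n \gtrsim \rho J_n q_0$. The cubic contribution to $|r_n|/|w q_n|$ is bounded by $q_n^2/w \lesssim \rho^2 J_n^2 (1 + 2\rho J)$, which tends to zero uniformly in $n$. The tail contribution equals a multiple of $\rho$ times $J_n^{-1} \sum_{m \geq n+1}(J_{n+m} + J_{m-n}) J_m$, so the crux is the kernel bound
\[
\sum_{m \geq n+1} \bigl( J_{n+m} + J_{m-n} \bigr) J_m \lesssim J_n,
\]
which I regard as the main obstacle, since the convolution piece $\sum_{m > n} J_{m-n} J_m$ does not obviously decay faster than $J_n$. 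I would dispatch it by splitting at $m = 2n$: on $n+1 \leq m \leq 2n$ the factor $J_m$ is $\lesssim n^{-(1+\alpha)}$ while $\sum J_{m-n}$ contributes a bounded tail constant, and on $m > 2n$ one has $J_m \leq J_{m-n}$, so the tail is controlled by $\sum_{k > n} J_k^2 \lesssim n^{-(1+2\alpha)}$; both pieces are $\lesssim J_n$. The companion sum $\sum_{m \geq n+1} J_{n+m} J_m$ is strictly smaller by the same splitting.

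The argument for $\{g_n\}$ runs in parallel, with \eqref{fdnls5} in place of \eqref{dnls2}: the analogous residual is $r_n^g = -g_n^3 - \epsilon \sum_{m \geq n+1}(J_{m-n} + J_{m+n-1}) g_m$ for $n \geq 2$, the base case at $n = 0$ uses $g_1 = g_0$ together with $g_0^2 = w + \epsilon(2J - J_1)$ to cancel the cubic, and the envelope $g_n \asymp \rho(J_n + J_{n-1}) g_0$ from \Cref{uniform_convergence} plays the role of $q_n \asymp \rho J_n q_0$ in the kernel bound above.
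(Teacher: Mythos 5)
Your proposal is correct, and it reaches the conclusion by a genuinely different route from the paper. The paper does not form the exact residual: it splits $\epsilon\mathscr{L}_\alpha q_n$ into the single term $I=\epsilon J_n(q_n-q_0)$ plus everything else $II$, shows $I\sim -\epsilon J_n q_0$ matches $-wq_n$ to leading order via the envelope of \Cref{uniform_convergence}, and then bounds $II$ by $O(w^{3/2}\rho^2 J^2)$ and the cubic by $O(w^{3/2}\rho^3)$, asserting uniformity in $n$ with a one-line reference back to \Cref{cor}. Your approach instead exploits the algebraic cancellation built into the recurrence \eqref{dnls2}, so the residual reduces exactly to $-q_n^3-\epsilon\sum_{m\ge n+1}(J_{m+n}+J_{m-n})q_m$ (I checked this identity, and the $n=0$ and offsite analogues); the whole burden then falls on the convolution bound $\sum_{m\ge n+1}(J_{m+n}+J_{m-n})J_m\lesssim J_n$, which your split at $m=2n$ handles correctly. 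What your route buys is a transparent treatment of the uniformity in $n$, which is the delicate point: the paper's displayed bound on $II$ is not proportional to $J_n$, so comparing it to $wq_n\sim w^{3/2}\rho J_n$ uniformly requires exactly the kind of $J_n$-weighted convolution estimate you isolate as "the main obstacle" (the paper's own intermediate expression $\epsilon\rho q_0\sum_m J_{|n-m|}(J_n-J_m)$ contains it implicitly but the stated bound discards it). What the paper's route buys is brevity and a direct identification of the dominant balance $-wq_n\approx -\epsilon J_nq_0$ without bookkeeping the cancellation. The only caveats to record for your version are cosmetic: the two-sided envelope $q_n\asymp\rho J_nq_0$ and the boundedness of $J_{n+1}/J_n$ require $J_n>0$ for all $n$ (automatic for $J_n=n^{-(1+\alpha)}$, which is the kernel actually in force here), and in the offsite case the sum defining $r_0^g$ starts at $m=2$ because $g_1=g_0$ is absorbed into the diagonal, consistent with what you wrote.
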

\begin{proof}
The big-O bound follows from \Cref{cor}. The proof is for $q_n$ and $n \geq 1$. By \Cref{uniform_convergence},
\begin{equation*}
-\omega_A q_n \sim -\omega_A \rho J_n q_0 = -\omega_A^{\frac{3}{2}}\rho J_n (1+2\rho J)^{\frac{1}{2}}.   
\end{equation*}
The interaction term is given by
\begin{equation*}
\epsilon\sum_{m \neq n} J_{|n-m|}(q_n - q_m) =  \epsilon J_{n}(q_n - q_0) + \epsilon\sum_{m \notin \{0,n\}} J_{|n-m|}(q_n - q_m) =: I + II,
\end{equation*}
and estimating the terms separately, we have
\begin{equation*}
I \sim -\epsilon J_n q_0 = - \omega_A^{\frac{3}{2}} \rho J_n (1+2\rho J)^{\frac{1}{2}},
\end{equation*}
and
\begin{equation}\label{asymp_estimate}
II \sim  \epsilon \rho q_0 \sum_{m \notin \{0,n\}} J_{|n-m|} (J_{|n|} - J_{|m|}) \leq  2 \omega_A^{\frac{3}{2}} \rho^2 (1+2\rho J)^{\frac{1}{2}}J^2.    
\end{equation}
It follows that $II$ is $O(\rho^2)$ and hence negligible. So is the nonlinear term, which is $O(\omega_A^{\frac{3}{2}} \rho^{3})$. The uniformity in $n$ follows as \Cref{cor}. The limit \eqref{fdnls_asymp} can be shown similarly for the case $n=0$. 
\end{proof}
By substituting $Q \mapsto \sqrt{\omega} Q$, the reduction of \eqref{generalmodel_stationary} and its derivative in the state variable are
\begin{equation}\label{scaled_gs}
\begin{split}
F(Q,\rho) &:= (I + \rho \mathscr{L})Q - Q^3 = 0,\\
D_{Q}F(Q,\rho) &= I + \rho \mathscr{L} - 3Q^2,    
\end{split}
\end{equation}
where the notation $Q^3$ denotes pointwise multiplication and $F: l^2(\mathbb{Z};\mathbb{R}) \times \mathbb{R} \rightarrow l^2(\mathbb{Z};\mathbb{R})$. Relevant to our application are two AC seeds: $Q^{*}_{A} = \delta_0$ and $Q^{*}_{B} = \delta_0 + \delta_1$ where $\delta_i$ is the Kronecker delta at $i \in \mathbb{Z}$. Since $D_{Q}F(Q(0),0) = I - 3 Q(0)^2$ is invertible where $Q(0) \in \{Q_A^{*},Q_B^{*}\}$, there exists a unique $C^1$-curve $\rho \mapsto Q(\rho)$ that satisfies $F(Q(\rho),\rho) = 0$ locally at $\rho = 0$ by IFT. The implication of \eqref{anti_continuum} is that the sequences \eqref{dnls2}, under an appropriate normalization, approximate the exact AC branches within $O(\rho)$. Though \Cref{ift} is stated for onsite solutions, the analogous result for offsite solutions also holds.
\begin{proposition}\label{ift}
Let $Q^{(\rho)} := \omega_A^{-\frac{1}{2}}q^{(\rho)}$ where $q^{(\rho)}$ is given by \eqref{dnls2}. For any $0 \leq \rho < \frac{1}{16J}$, there exists $C>0$ independent of $\alpha > 0$ ($C = \sqrt{22}+16$ suffices) such that 
\begin{equation}\label{anti_continuum}
\| Q^{(\rho)} - Q(\rho) \|_{l^2} \leq C J \rho.    
\end{equation}
\end{proposition}
\begin{proof}
By \eqref{dnls2} and \Cref{uniform_convergence},
\begin{equation*}
Q^{(\rho)}_0 - (Q^*_A)_{0} = \frac{2J\rho}{\sqrt{1+2J\rho} + 1},\ |Q^{(\rho)}_n| < 2\sqrt{1+2J\rho} ( J_{|n|}\rho),    
\end{equation*}
respectively, and therefore
\begin{equation}\label{residual}
    \| Q^{(\rho)} - Q^*_{A} \|_{l^2}^2 < 4J^2 \rho^2 + 4(1+2J\rho) \rho^2 \sum_{n \neq 0} J_{|n|}^2 < 22 J^2 \rho^2.
\end{equation}

We obtain  bounds on $Q^{\prime} := \frac{dQ}{d\rho}$ via IFT. The operator norm $\| D_Q F (Q_A^{*},0)^{-1}\| = 1$ since $D_Q F (Q_A^{*},0)$ has simple eigenvalues at $-2,1$. There exist $r_1, r_2 > 0$ such that whenever $\| Q - Q_A^{*} \|_{l^2} < r_1,\ \rho < r_2$, we have
\begin{equation*}
\begin{split}
\| D_Q F(Q,\rho) - D_Q F (Q_A^{*},0)\| &= \| \rho \mathscr{L} - 3(Q^2 - (Q_A^{*})^2)\|\\
&\leq \rho \| \mathscr{L} \| + 3(\| Q \|_{l^\infty} + \| Q_A^{*} \|_{l^\infty}) \| Q - Q_A^{*} \|_{l^2} < 4J r_2 + (6+3r_1) r_1 \leq \frac{1}{2},
\end{split}
\end{equation*}
where $r_1 = \frac{1}{100},\ r_2 = \frac{1}{16J}$ satisfy the last inequality. By the Neumann series,
\begin{equation}\label{neumann}
\| D_Q F(Q,\rho)^{-1}\| < 2    
\end{equation}
for any $(Q,\rho)$ sufficiently close to $(Q_A^{*},0)$ within the $(r_1,r_2)$-rectangle. The $\rho$-derivative of $F(Q(\rho),\rho) = 0$ yields
\begin{equation*}
    D_Q F (Q(\rho),\rho) Q^{\prime}(\rho) = -\mathscr{L} Q,
\end{equation*}
and therefore by \eqref{neumann} and $\| Q(\rho) - Q_A^{*} \|_{l^2} < r_1$, we have
\begin{equation}\label{neumann2}
\| Q^{\prime}(\rho) \|_{l^2} \leq \| D_Q F (Q(\rho),\rho)^{-1}\| \| \mathscr{L} \| \| Q(\rho) \|_{l^2} < 2 (4J) (1+r_1) \leq 16J.   
\end{equation}
By the mean value theorem and \eqref{neumann2},
\begin{equation*}
    \| Q(\rho) - Q_A^{*} \|_{l^2} \leq \int_0^{\rho} \| Q^{\prime}(\rho^{\prime}) \|_{l^2} d\rho^{\prime} \leq 16J \rho,
\end{equation*}
and \eqref{anti_continuum} follows from the triangle inequality with \eqref{residual}.

\end{proof}

With \Cref{ift}, we turn to orbital stability of the onsite solutions with constants uniform in $\rho$ in the regime $\rho \lesssim J^{-1}$. By verifying the Grillakis–Shatah–Strauss (GSS) criteria uniformly, the dynamics of the constructed sequence \eqref{dnls2} remains trapped within any $O(\epsilon^{\prime})$-tube around the exact ground state, making the explicit sequence and its dynamics a stable and reliable surrogate for the true evolution; see \cite{grillakis1987stability,kapitula2013spectral}.

\begin{corollary}
Assume the hypotheses of \Cref{ift}, and let $Q(\rho)$ be the $C^1$-curve of onsite stationary solutions bifurcating from $Q_{A}^{*}$ at $\rho = 0$. Let $u(t)$ be the evolution with initial datum $u(0) = Q^{(\rho)}$. For any $\epsilon^{\prime}>0$, there exists $c_0(\epsilon^{\prime}) > 0$ such that if $0 \leq \rho < \frac{c_0}{J}$ (by shrinking $\rho$ if necessary), then
\begin{equation}
    \sup_{t \in \mathbb{R}} \inf_{\theta \in \mathbb{R}}\| u(t) - e^{i\theta} Q(\rho) \|_{l^2} < \epsilon^{\prime}.
\end{equation}

\end{corollary}

\begin{proof}

In the rotating frame of reference, linearizing about $Q(\rho)$ with $\eta(t) = a(t) + i b(t)$ where $a(t), b(t) \in l^2(\mathbb{Z};\mathbb{R})$ and $|a|, |b| \ll 1$, the first order approximation yields
\begin{equation*}
    \partial_t
    \begin{pmatrix}
        a \\ b
    \end{pmatrix}
    = J
    \begin{pmatrix}
        L_{+} & 0\\
        0 & L_{-}
    \end{pmatrix}
    \begin{pmatrix}
        a \\ b
    \end{pmatrix},
\end{equation*}
where $J = \begin{pmatrix}
        0 & -I\\
        I & 0
    \end{pmatrix}$
and $L_{\pm}(\rho) = I + \rho \mathscr{L} - c_{\pm} Q(\rho)^2$ with $c_{+} = 3,\ c_{-} = 1$. 

The linearized operators $L_{\pm}(\rho)$ are compact perturbations of $I + \rho \mathscr{L}$ since $Q(\rho)_n \xrightarrow[|n| \rightarrow \infty]{} 0$. By Weyl's theorem on essential spectra, we have a uniform gap $\sigma_{\text{ess}}(L_{\pm}) = \sigma_{\text{ess}}(I + \rho \mathscr{L}) \subseteq \sigma(I + \rho \mathscr{L}) \subseteq [1,\infty)$ since $\mathscr{L}$ is a nonnegative operator. By gauge invariance and IFT, we have $\text{ker}(L_{-}(\rho)) = \text{span}\{Q(\rho)\}$. At $\rho = 0$, both $L_{\pm}(0)$ act as the identity on the orthogonal complement of $\text{span}\{Q_{A}^{*}\}$, and $L_{+}(0)$ has one negative eigenvalue at $-2$, i.e, $n_{-}(L_{+}(0)) = 1$. By Kato’s theorem on stability of the discrete spectrum under bounded perturbations, there exists $c > 0$, independent of $\rho$, such that $\langle L_{\pm}(\rho) v, v \rangle \geq c \| v \|_{l^2}^2$ for any $v \in l^2(\mathbb{Z};\mathbb{R})$ in the orthogonal complement of $Q(\rho)$ (coercivity off the symmetry) and $n_{-}(L_{+}(\rho)) = 1$.

We show the Vakhitov-Kolokolov slope condition is satisfied uniformly. Let $Q_{\omega} = \sqrt{\omega} Q(\rho)$. By chain rule, $N[Q_\omega] = \omega \| Q(\rho) \|_{l^2}^2$, and
\begin{equation*}
    \frac{d}{d\omega} N[Q_\omega] = \| Q(\rho) \|_{l^2}^2 - 2 \rho \langle Q(\rho), \partial_\rho Q(\rho) \rangle = \| Q(\rho) \|_{l^2}^2 + 2 \rho \langle Q(\rho), L_{+}(\rho)^{-1} \mathscr{L} Q(\rho) \rangle,
\end{equation*}
where the second equality is by differentiating $F(Q(\rho),\rho)=0$ by $\rho$ (see \eqref{scaled_gs}). By \eqref{anti_continuum} and \eqref{neumann},
\begin{equation*}
\begin{split}
\frac{d}{d\omega} N[Q_\omega] \geq (\| Q(0) \|_{l^2} - \| Q(\rho) - Q(0) \|_{l^2})^2 - 16J \rho \| Q(\rho) \|_{l^2}^2 > \frac{1}{2},
\end{split}
\end{equation*}
for $\rho \ll J^{-1}$. All GSS criteria have been shown whose constants do not depend on $\rho$. For any $\epsilon^{\prime} > 0$, there exists $\delta > 0$ depending only on $\epsilon^{\prime}$ such that any initial data inside the $\delta$-tube of $Q(\rho)$ stay within the $\epsilon^{\prime}$-tube. By \eqref{anti_continuum}, the proof is complete by choosing $\rho < \frac{\delta}{CJ}$.

\end{proof}

\begin{remark}\label{rmk}
    The first order approximation \eqref{anti_continuum} is not uniform in $\alpha$, since $J^{-1} \sim \alpha$ as $\alpha \rightarrow 0$, and our hypothesis in \Cref{ift} is $\rho \lesssim J^{-1}$. Indeed, the fixed-power definition of PNB does not apply when $\alpha \ll \rho$. For fixed $\epsilon, \omega_A, \omega_B>0$, note that    
    \begin{equation*}
        N_A \sim q_0^2 \sim \frac{2\epsilon}{\alpha},\ N_B \sim 2 g_0^2 \sim \frac{4\epsilon}{\alpha}, \text{ as } \alpha \rightarrow 0. 
    \end{equation*}
   Since $N_B \sim 2 N_A$, the conservation of particle number does not hold. This suggests that the sequences \eqref{dnls2}, \eqref{fdnls5} may show non-negligible deviation away from the exact branches when $\alpha \ll 1$. 
\end{remark}

\subsection{Peierls-Nabarro barrier for fDNLS}

This subsection consists of numerical simulations where we assume $J_n = n^{-(1+\alpha)}$ in which traveling wave solutions are numerically investigated. \Cref{fig:PNB_EPSILON,fig:PNB_OMEGA} most notably highlight that the PNB can vanish under a particular parameter regime and \Cref{fig:vanishingPNB_travelingWave,fig:mobility_figure,fig:6} numerically show traveling wave dynamics. The PNB for fDNLS is fundamentally different from that of the nearest-neighbor interaction. The PNB can take both positive and negative values, and it can increase in $\epsilon$ or $\omega$ depending on $\alpha$. This suggests that the onsite solution need not always be the energy minimizer of the Hamiltonian away from the AC regime. It is of interest to further investigate the role of nonlocality in the framework of variational approach to fDNLS and the stability properties of onsite/offsite solutions. By \eqref{energy} and the symmetry properties of the onsite/offsite solutions, an analogue of \Cref{energy_dnls_onoff} is derived. The proof follows from \Cref{cor} and standard algebra, and thus is omitted.

\begin{proposition}\label{PNB_fdNLS}
Let $q_n,g_n$ be given by \eqref{dnls2}, \eqref{fdnls5}, respectively. Then,
\begin{equation}\label{energy_fdnls}
\begin{split}
E_A &= \epsilon\sum_{n=1}^\infty\Biggl( \frac{|q_n - q_0|^2}{n^{1+\alpha}}+ \frac{1}{2}  \sum_{\substack{m=1 \\ m \neq n}}^{\infty}  \left(\frac{1}{|n-m|^{1+\alpha}} + \frac{1}{|n+m|^{1+\alpha}}\right)|q_n - q_m|^2\Biggl)-\left(\frac{1}{4} q_0^4 + \frac{1}{2} \sum_{n=1}^\infty q_n^4\right),\\
E_B &= \epsilon \sum_{n=2}^\infty \Biggl(\left(\frac{1}{n^{1+\alpha}}+\frac{1}{(n-1)^{1+\alpha}}\right)|g_n - g_0|^2\\
&\hspace{120pt}+ \frac{1}{2}\sum_{\substack{m=2 \\ m\neq n}}^{\infty}\left(\frac{1}{|n-m|^{1+\alpha}}+\frac{1}{|n+m-1|^{1+\alpha}}\right)|g_n - g_m|^2\Biggl) -\frac{1}{2} \sum_{n=1}^\infty g_n^4.
\end{split}
\end{equation}
\end{proposition}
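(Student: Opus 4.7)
The plan is to substitute the onsite sequence $q_n$ and offsite sequence $g_n$ into the energy functional \eqref{energy}, then exploit the reflection symmetries $q_n=q_{-n}$ and $g_n=g_{-(n-1)}$ to fold the kinetic double sum $\frac{\epsilon}{4}\sum_{n\neq m}|u_n-u_m|^2/|n-m|^{1+\alpha}$ (which is the form of the quadratic part recorded immediately after \eqref{energy}, and which equals $\frac{\epsilon}{2}\langle \mathscr{L}_\alpha u,u\rangle_{l^2}$ by self-adjointness) into a single series indexed by positive integers. The nonlinear piece $\frac{1}{4}\sum_n|u_n|^4$ is immediate from the same symmetry.

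For the onsite case, I would partition $\{(n,m)\in\mathbb{Z}^2:n\neq m\}$ into the three pieces (a) exactly one of $n,m$ equals $0$, (b) $n,m$ both of the same nonzero sign, (c) $n,m$ of opposite nonzero signs. Piece (a) collapses via $q_{-k}=q_k$ to $4\sum_{k\geq 1}|q_k-q_0|^2/k^{1+\alpha}$. Piece (b) folds the negative-negative quadrant onto the positive-positive one, doubling it and keeping the kernel $|n-m|^{-(1+\alpha)}$. Piece (c), after relabeling $m\mapsto -m$, produces the kernel $(n+m)^{-(1+\alpha)}$; the diagonal $n=m$ that creeps in there is harmless because $|q_n-q_m|^2$ vanishes on it, so it can be included or excluded freely. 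Multiplying by $\epsilon/4$ and combining with $\frac{1}{4}q_0^4+\frac{1}{2}\sum_{n\geq 1}q_n^4$ reconstructs $E_A$.

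The offsite case is structurally the same but with the axis of reflection shifted to $n=\tfrac{1}{2}$. Because $g_0=g_1$, every pair with both indices in $\{0,1\}$ contributes zero, and it is convenient to split the remaining index set into pairs with exactly one of $n,m$ in $\{0,1\}$ and pairs with both outside $\{0,1\}$. In the first group the $(0,k)$ and $(1,k)$ contributions combine, through $g_1=g_0$, to give the coefficient $k^{-(1+\alpha)}+(k-1)^{-(1+\alpha)}$ attached to $|g_k-g_0|^2$ for $k\geq 2$, the negative side mirroring the positive side under $k\mapsto 1-k$. In the second group the same shifted reflection produces both the $|n-m|^{-(1+\alpha)}$ kernel (same-sign pairs) and the $(n+m-1)^{-(1+\alpha)}$ kernel (opposite-sign pairs) that appear in the statement. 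The nonlinear contribution collapses to $\frac{1}{2}\sum_{n\geq 1}g_n^4$ because each $n\geq 1$ pairs with the distinct non-positive index $1-n$.

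The one point that needs genuine justification --- and the only reason \Cref{cor} is invoked --- is absolute convergence of the rearranged double series, without which the splitting into cases is not permitted. \Cref{cor} supplies $q_n,g_n\sim A\rho\, n^{-(1+\alpha)}$, which is sufficient to dominate all the sums in \eqref{energy_fdnls} for every $\alpha>0$, since $|q_n-q_m|^2\lesssim q_n^2+q_m^2\lesssim n^{-2(1+\alpha)}+m^{-2(1+\alpha)}$ and both kernels $|n-m|^{-(1+\alpha)}$ and $(n+m)^{-(1+\alpha)}$ are easily summable against such weights. Once convergence is secured, everything else is index bookkeeping; I expect the only place to slip up is the half-integer reflection $n\mapsto 1-n$ in the offsite computation, where one must track carefully that the $(0,1)$ pair is excluded and that the $(0,k)$ and $(1,k)$ contributions must be added rather than identified.
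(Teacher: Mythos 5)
Your proposal is correct and is precisely the argument the paper has in mind: the paper omits the proof, stating only that it ``follows from \Cref{cor} and standard algebra,'' and your write-up supplies exactly that — absolute convergence from the decay $q_n,g_n\lesssim n^{-(1+\alpha)}$ so the double sum may be rearranged, followed by folding the kinetic sum over $\mathbb{Z}^2$ onto positive indices using $q_{-n}=q_n$ and $g_n=g_{1-n}$. Your case analysis (including the vanishing of the folded diagonal and the careful handling of the $\{0,1\}$ pair in the offsite case) checks out against the stated formulas.
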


\begin{figure}[H]
\centering
\begin{subfigure}[h]{0.5\linewidth}
\centering
\includegraphics[width=\linewidth]{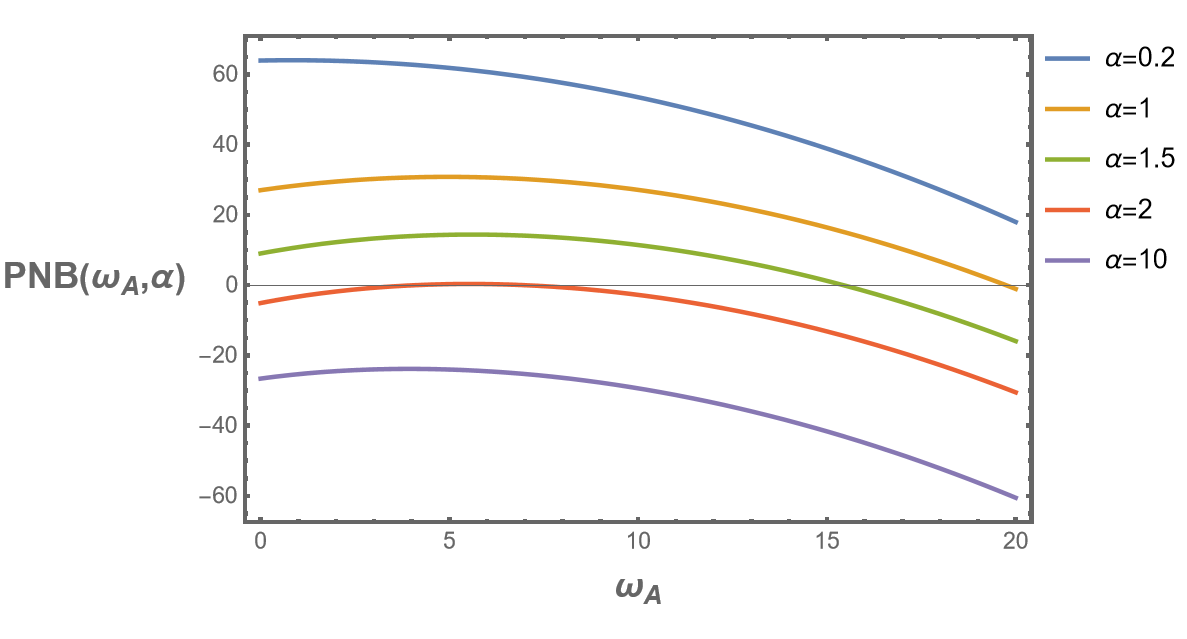}
\caption{}
\label{fig:PNB_OMEGA}
\end{subfigure}%
\begin{subfigure}[h]{0.5\linewidth}
\centering
\includegraphics[width=\linewidth]{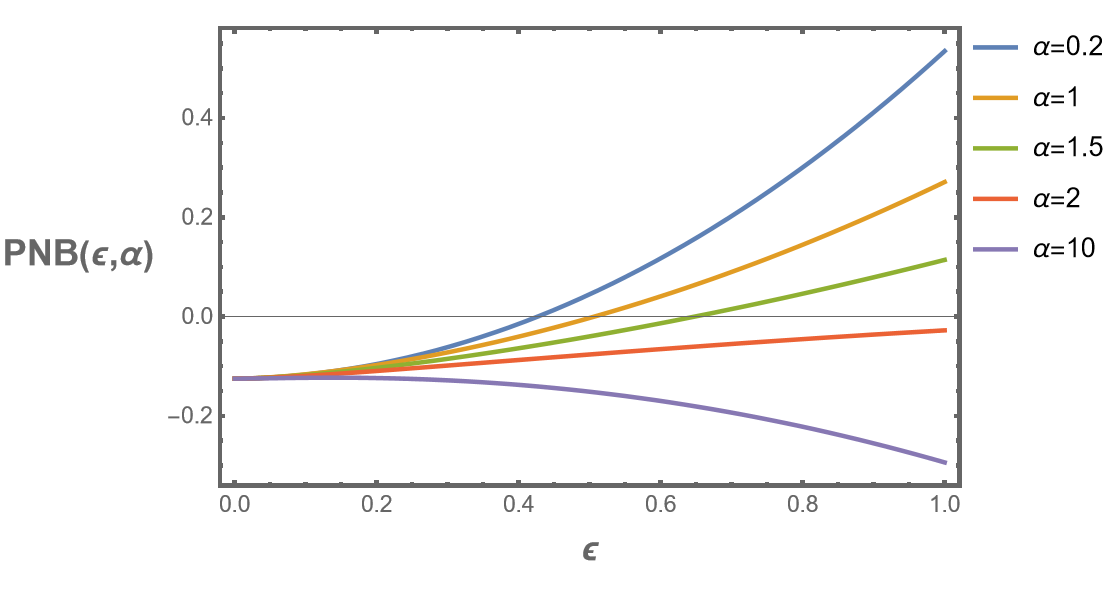}
\caption{ }
\label{fig:PNB_EPSILON}
\end{subfigure}
\label{fig_PNBfDNLS}
\caption{\Cref{fig:PNB_OMEGA} with $\epsilon = 10$ shows the transient state of PNB($\omega$) before they behave as $-\frac{\omega_A^2}{8}$ as $\omega_A \rightarrow \infty$. \Cref{fig:PNB_EPSILON} with $\omega_{A}=1$ shows the PNB diverging away from $-\frac{\omega_A^2}{8}$, the initial value at $\epsilon = 0$, as $\epsilon>0$ increases. The plots show two instances of vanishing PNB. Numerically for $\omega_{A}=1$, the PNB vanishes for $\alpha=0.2,1$ and $\epsilon\approx0.429,\epsilon\approx 0.5066$ respectively. Likewise for \Cref{fig:PNB_OMEGA},  the PNB vanishes for $\alpha = 2$ and $\omega_{A} \in (4.1,4.2)$.}
\end{figure}

The analysis in \Cref{fDNLS} was simplified under the assumption $\rho = \frac{\epsilon}{\omega} 
\ll 1$. A similar analysis is further developed in the context of PNB using \Cref{PNB_fdNLS}. By the conservation of particle number, assume $\omega_A = 2\omega_B$.
\begin{corollary}
As $\omega_A, \omega_B \rightarrow \infty$,
\begin{equation*}\label{energy_fdnls2}
\begin{split}
    E_A &\sim -\frac{\omega_A^2}{4} -(2\zeta(2+2\alpha) + \zeta(1+\alpha)^2)\epsilon^2,\\
    E_B &\sim  -\frac{\omega_B^2}{2}  -  \left(2 \sum_{n=2}^\infty \left(\frac{1}{n^{1+\alpha}} + \frac{1}{(n-1)^{1+\alpha}}\right)^2 + \frac{1}{2}(2\zeta(1+\alpha) - 1)^2\right)\epsilon^2.
\end{split}
\end{equation*}
Therefore, $E_A(\omega_A) \sim 2 E_B(\omega_B)$ and
\begin{equation*}\label{PNB_fdnls_wlarge}
\Delta E_{AB} \sim -\frac{\omega_A^2}{8}.
\end{equation*}
\end{corollary}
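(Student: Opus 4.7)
The plan is to substitute the asymptotics of $q_n$ and $g_n$ from \Cref{uniform_convergence} and \Cref{cor} into the closed-form expressions for $E_A,E_B$ in \Cref{PNB_fdNLS} and collect the leading-order contributions in the regime $\rho := \epsilon/w \to 0$. Writing $J := \zeta(1+\alpha) = \sum_{n \geq 1} J_n$, one has $q_0^2 = w_A + 2\epsilon J$, $g_0^2 = w_B + \epsilon(2J - 1)$, and the uniform expansions
\[
\frac{q_n}{q_0} = \rho J_n \bigl(1 + O(\rho)\bigr) \quad (n \geq 1), \qquad \frac{g_n}{g_0} = \rho (J_{n-1} + J_n)\bigl(1 + O(\rho)\bigr) \quad (n \geq 2).
\]
Each mode thus decomposes into a central peak of height $\sim\!\sqrt{w}$ and an algebraically decaying tail of amplitude $O(\rho\sqrt{w}) = O(\epsilon/\sqrt{w})$.

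For $E_A$, I would expand term-by-term, keeping contributions of size $O(w_A^2)$, $O(\epsilon w_A)$, and $O(\epsilon^2)$ and discarding everything vanishing as $w_A \to \infty$. The onsite nonlinear part is exact, $-\tfrac{1}{4}q_0^4 = -\tfrac{w_A^2}{4} - \epsilon w_A J - \epsilon^2 J^2$, while $-\tfrac{1}{2}\sum_{n\geq 1} q_n^4 = O(\rho^4 q_0^4) = O(\epsilon^4/w_A^2)$ is negligible. In the first kinetic sum, $|q_n - q_0|^2 = q_0^2\bigl(1 - 2\rho J_n + O(\rho^2)\bigr)$ combined with the identity $\sum_{n\geq 1} J_n/n^{1+\alpha} = \zeta(2+2\alpha)$ gives
\[
\epsilon \sum_{n\geq 1}\frac{|q_n - q_0|^2}{n^{1+\alpha}} = \epsilon q_0^2 J - 2\epsilon \rho q_0^2\, \zeta(2+2\alpha) + o(1) = \epsilon w_A J + 2\epsilon^2 J^2 - 2\epsilon^2 \zeta(2+2\alpha) + o(1),
\]
and the double kinetic sum is controlled by $|q_n - q_m|^2 = O(\rho^2 q_0^2)$ together with the bound $\sum_{n\neq m \geq 1}\bigl(|n-m|^{-(1+\alpha)} + (n+m)^{-(1+\alpha)}\bigr)(J_n^2 + J_m^2) \lesssim_\alpha 1$, so it contributes only $O(\epsilon^3/w_A) = o(1)$. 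The $\epsilon w_A J$ contributions cancel between nonlinear and kinetic pieces, producing $E_A = -\tfrac{w_A^2}{4} + c_A(\alpha)\epsilon^2 + o(1)$ with $c_A(\alpha)$ an explicit $\alpha$-dependent constant.

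The expansion of $E_B$ is analogous, with the crucial cancellation now between $-\epsilon w_B(2J-1)$ coming from $-\tfrac{1}{2}g_0^4$ and $+\epsilon w_B(2J-1)$ coming from the leading piece $\epsilon g_0^2 \sum_{n\geq 2}\bigl(n^{-(1+\alpha)} + (n-1)^{-(1+\alpha)}\bigr)$ of the first kinetic sum; the cross term produces $-2\epsilon^2 \sum_{n\geq 2}(J_{n-1} + J_n)^2$, and the remaining double kinetic and higher-order nonlinear contributions are $o(1)$. Hence $E_B = -\tfrac{w_B^2}{2} + c_B(\alpha)\epsilon^2 + o(1)$. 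Substituting the mass-conservation relation $w_A = 2w_B$ yields
\[
\Delta E_{AB} = E_A - E_B \sim -\tfrac{w_A^2}{4} + \tfrac{w_B^2}{2} = -\tfrac{w_A^2}{8}, \qquad \frac{E_A}{E_B} \to 2,
\]
as claimed. The main obstacle is not conceptual but combinatorial: one must track the cancellation of the $O(\epsilon w)$ contributions across the kinetic and nonlinear pieces and verify uniform convergence of the double kinetic sums, both of which are handled by the uniform envelope bounds of \Cref{uniform_convergence}.
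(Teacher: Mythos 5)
Your proposal is correct and is exactly the computation the paper intends: \Cref{PNB_fdNLS} is stated with its proof omitted as ``\Cref{cor} and standard algebra,'' and the corollary follows by substituting $q_n/q_0 \sim \rho J_n$, $g_n/g_0 \sim \rho(J_{n-1}+J_n)$ into \eqref{energy_fdnls}, tracking the cancellation of the $O(\epsilon w)$ terms, and discarding the $o(1)$ double-kinetic and tail-nonlinear contributions, just as you do. One remark: carrying out your expansion to the end gives $c_A(\alpha)\epsilon^2 = \left(\zeta(1+\alpha)^2 - 2\zeta(2+2\alpha)\right)\epsilon^2$ and the analogous $+\tfrac{1}{2}(2\zeta(1+\alpha)-1)^2$ term for $E_B$, i.e.\ the opposite sign on the squared-zeta terms from what is printed in this corollary but in agreement with the paper's subsequent $\epsilon \rightarrow 0$ corollary (both are $\rho \rightarrow 0$ expansions, so they must coincide); since these terms are $o(w_A^2)$ they are immaterial to the claims $E_A \sim 2E_B$ and $\Delta E_{AB} \sim -w_A^2/8$, and the discrepancy appears to be a sign typo in the statement rather than an error in your argument.
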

\begin{corollary}
    As $\epsilon \rightarrow 0$,
\begin{equation*}\label{energy_fdnls3}
\begin{split}
    E_A &\sim -\frac{\omega_A^2}{4} + \left(-2\zeta(2+2\alpha) + \zeta(1+\alpha)^2\right)\epsilon^2,\\
    E_B &\sim  -\frac{\omega_B^2}{2} + \left(-2\sum_{n=2}^\infty \left(\frac{1}{n^{1+\alpha}} + \frac{1}{(n-1)^{1+\alpha}}\right)^2 + \frac{1}{2}(2\zeta(1+\alpha)-1)^2\right)\epsilon^2.
\end{split}
\end{equation*}
Therefore, $E_A(\omega_A) \sim 2 E_B(\omega_B)$ and
\begin{equation*}\label{PNB_fdnls_esmall}
\Delta E_{AB} \sim -\frac{\omega_A^2}{8} + \left(2\sum_{n=2}^\infty \left(\frac{1}{n^{1+\alpha}} + \frac{1}{(n-1)^{1+\alpha}}\right)^2 - (\zeta(1+\alpha)-1)^2 - 2 \zeta(2+2\alpha) + \frac{1}{2}\right)\epsilon^2.
\end{equation*}
\end{corollary}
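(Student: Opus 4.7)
The plan is to substitute the asymptotic expansions from \Cref{fdnls_asymptotic} and \Cref{uniform_convergence} into the explicit energy formulas of \Cref{PNB_fdNLS}, expand each piece to order $\epsilon^2$, and then form the difference using $w_A = 2w_B$ to read off the PNB. The structure is parallel to the argument for the large-$w$ corollary, but with $\epsilon$ (rather than $w^{-1}$) playing the role of the small parameter, and with the extra care that the expansion of $q_0, g_0$ themselves already contains $\epsilon$-corrections.

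Write $J := \zeta(1+\alpha)$ and $\rho := \epsilon/w$. Then $q_0^2 = w_A + 2\epsilon J$ and $g_0^2 = w_B + \epsilon(2J - 1)$. From \Cref{fdnls_asymptotic}, $q_n/q_0 = \rho J_n + O(\rho^2)$ and $g_n/g_0 = \rho(J_{n-1}+J_n) + O(\rho^2)$, with the $O(\rho^2)$ constants uniform in $n$ by \Cref{uniform_convergence}. Expanding $|q_n - q_0|^2 = q_0^2 - 2q_0^2 \rho J_n + O(\rho^2 q_0^2)$ and substituting into the first kinetic-like sum in $E_A$ gives $\epsilon q_0^2 J - 2\epsilon^2 \zeta(2+2\alpha) + O(\epsilon^3)$, using $\sum_n J_n^2 = \zeta(2+2\alpha)$; the double sum is $O(\epsilon^3)$ since each summand carries $\rho^2 q_0^2$; the nonlinear tail $-\tfrac12 \sum_{n\geq 1} q_n^4$ is $O(\epsilon^4)$; and the bulk nonlinear term expands as $-q_0^4/4 = -w_A^2/4 - w_A\epsilon J - \epsilon^2 J^2$. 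The $\epsilon^1$ contributions cancel exactly, leaving the $\epsilon^2$ coefficient $\zeta(1+\alpha)^2 - 2\zeta(2+2\alpha)$, as claimed.

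The analysis of $E_B$ is analogous, with the crucial observation that $g_1 = g_0$ so that
\[
-\tfrac{1}{2}\sum_{n\geq 1}g_n^4 = -\tfrac{1}{2}g_0^4 + O(\epsilon^4) = -\tfrac{w_B^2}{2} - w_B\epsilon(2J-1) - \tfrac{1}{2}\epsilon^2 (2J-1)^2 + O(\epsilon^4),
\]
while the first kinetic sum in $E_B$ expands as $\epsilon g_0^2(2J-1) - 2\epsilon^2 \sum_{n\geq 2}(J_n + J_{n-1})^2 + O(\epsilon^3)$; again the $\epsilon^1$ pieces cancel and the stated $\epsilon^2$ coefficient emerges. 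For the PNB, substituting $w_A = 2w_B$ into $E_A - E_B$ yields the leading $-w_A^2/8$, and the $\epsilon^2$ coefficient collapses to the claimed expression via the algebraic identity
\[
\zeta(1+\alpha)^2 - \tfrac{1}{2}(2\zeta(1+\alpha)-1)^2 = -(\zeta(1+\alpha)-1)^2 + \tfrac{1}{2}.
\]

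The one genuine technical point—everything else being bookkeeping—is ensuring that the $O(\rho^2)$ remainders in $q_n/q_0$ and $g_n/g_0$ are \emph{uniform} in $n$, so that after summing against $J_n$ or $J_n + J_{n-1}$ the aggregate error remains $O(\epsilon^3)$ rather than diverging. This uniformity is precisely what \Cref{uniform_convergence} supplies, and combined with absolute convergence of $\sum J_n$, $\sum J_n^2$, and $\sum (J_{n-1}+J_n)^2$ (which hold for every $\alpha > 0$), it legitimizes the term-by-term expansion and concludes the proof.
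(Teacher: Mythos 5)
Your proposal is correct and follows exactly the route the paper intends (the paper omits the computation, noting only that the energy formulas of Proposition 5.6 combined with the asymptotics of Corollary 5.2 yield the result by "standard algebra"): substitute $q_n \sim \rho J_n q_0$, $g_n \sim \rho(J_{n-1}+J_n)g_0$ into \eqref{energy_fdnls}, observe the $O(\epsilon)$ cancellation, and collect the $\epsilon^2$ coefficients. Your bookkeeping — including the $g_1=g_0$ observation, the $O(\epsilon^3)$ estimate on the double sums, the appeal to Proposition 5.3 for uniformity of the remainders, and the final algebraic identity — checks out.
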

For $\omega_A>0$ not sufficiently large, $PNB(\omega_A)$ may not be well approximated by the quadratic term; in fact, $PNB(\omega_A)$ may be increasing for $\omega_A>0$ sufficiently small (see \Cref{fig:PNB_OMEGA}). For $\epsilon>0$ not sufficiently small, the behavior of PNB is generally not quadratic in $\epsilon$ since the higher order terms cannot be neglected. For $\epsilon \ll 1$, note that PNB may increase or decrease depending on the sign of the coefficient of $\epsilon^2$ (see \Cref{fig:PNB_EPSILON}).

\begin{figure}[H]
\begin{subfigure}[h]{0.5\linewidth}
\includegraphics[width=\linewidth]{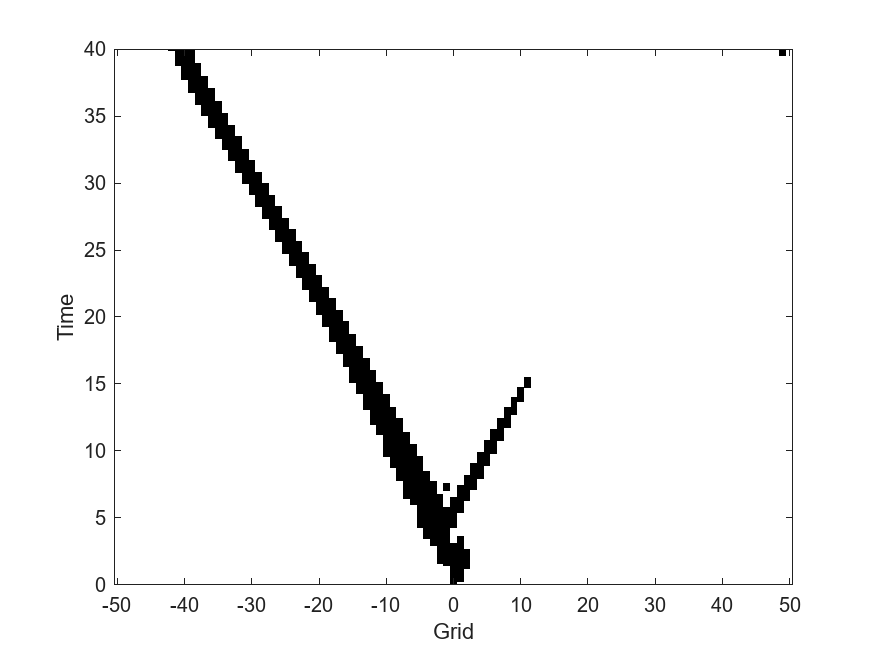}
\caption{Binarized intensity plot. Shaded lattice sites satisfy $|u_n|^2 > 0.14$}
\label{fig:traveling_v1}
\end{subfigure}
\hfill
\begin{subfigure}[h]{0.5\linewidth}
\includegraphics[width=\linewidth]{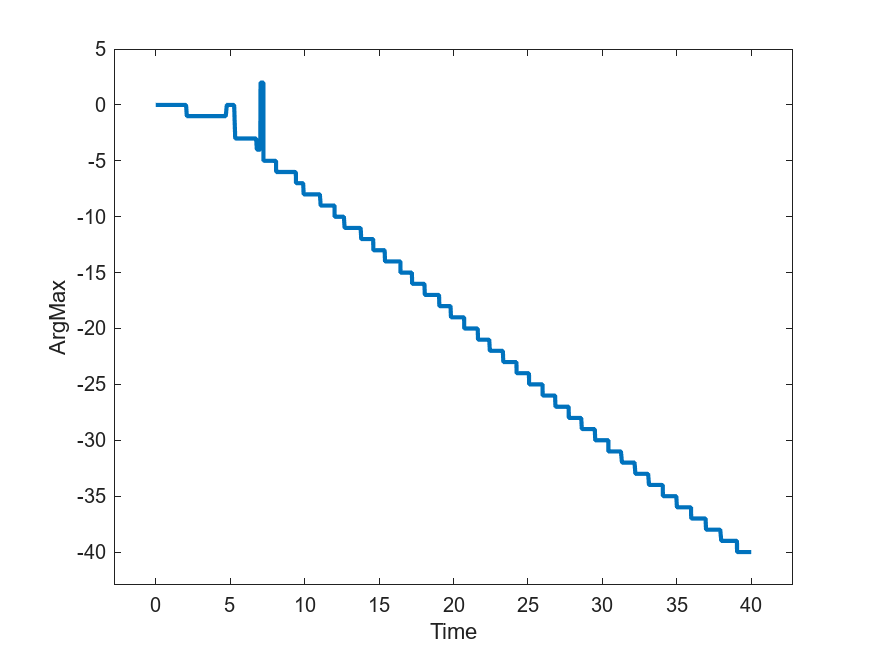}
\caption{ Maximum intensity plot}
\label{fig:staircase_travelingv1}
\end{subfigure}
\caption{Evolution of \cref{generalmodel} with $J_{n}=|n|^{-(1+\alpha)}$, using periodic boundary conditions, and parameters $\alpha=1$, $\omega_{A}=1$, and $\epsilon=0.5066$ with \cref{dnls2} as the initial condition with Galilean drift factor of $e^{i v n}$, where $v=1$. Lattice sites which satisfy the threshold condition $|u_{n}|^{2} > 0.14$ are shaded black in \cref{fig:traveling_v1} whereas \cref{fig:staircase_travelingv1} shows the lattice site which as the maximum intensity per time increment of the evolution.}
\label{fig:vanishingPNB_travelingWave}
\end{figure}

\begin{figure}[H]
\centering
\includegraphics[width = 0.9\textwidth]{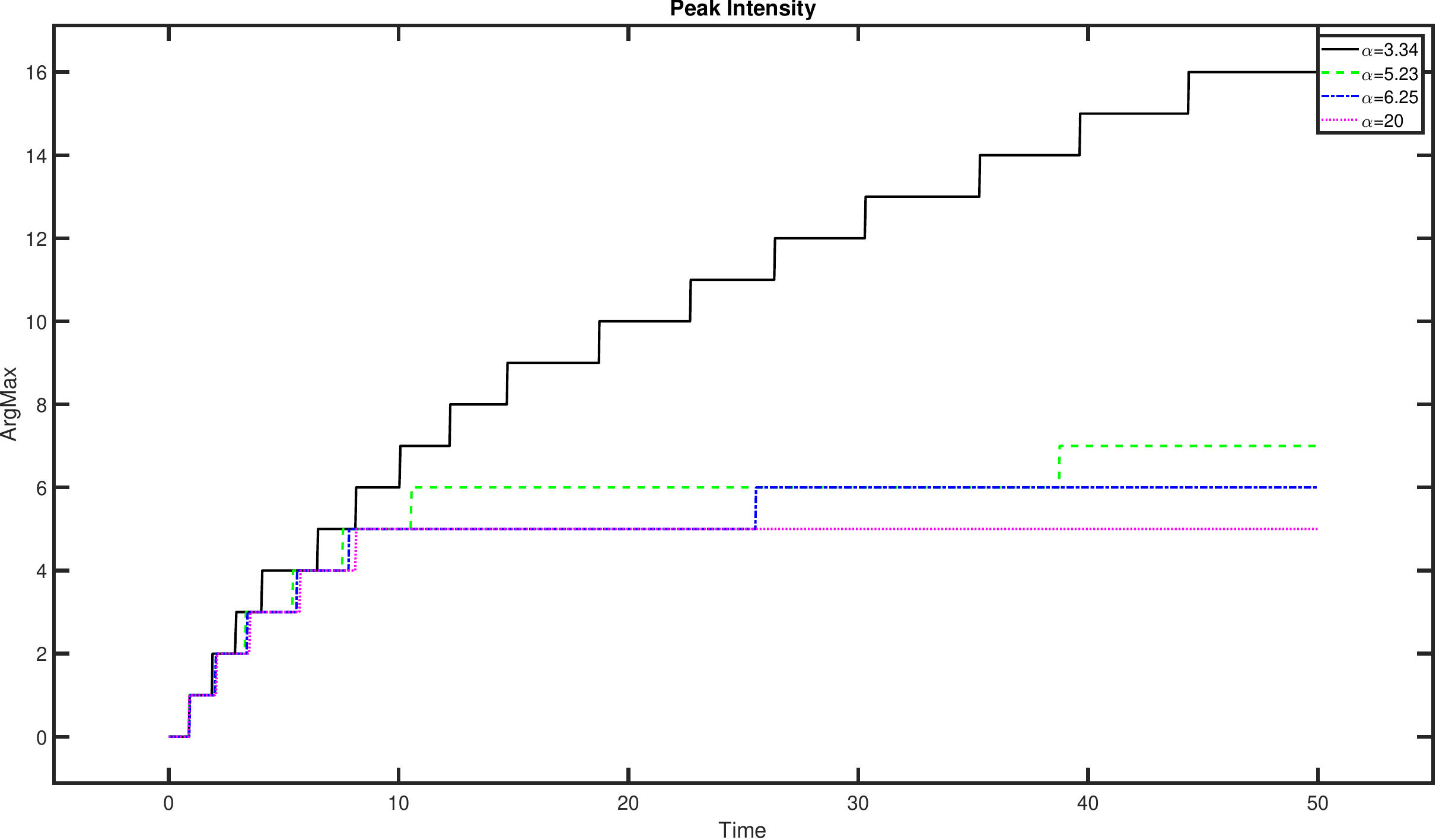}
\caption{Position of peak intensity for traveling waves of the fDNLS for $\epsilon=1, \omega=1$, $v=1$, varying $\alpha \in \{3.34, 5.23,6.25,20\}$, and periodic boundary conditions.}
\label{fig:mobility_figure}
\end{figure}

As pointed out in \Cref{fig:PNB_EPSILON}, there exists a regime under which the PNB is an increasing function in stark contrast to that of DNLS. For $(\omega_A,\epsilon,\alpha)= (1, 0.5066,1)$, we have PNB $\approx 0$ by \Cref{PNB_fdNLS}, which suggests mobility and the existence of a traveling wave solution (TWS). A numerical simulation of the evolution of onsite initial condition \eqref{dnls2} is shown in \Cref{fig:vanishingPNB_travelingWave}. Given the approximations, it is not surprising there is not clear TWS at the start and an dispersive emission at $t \approx 5$. What is surprising is that the dynamics settles into a ``clean" TWS although we believe TWS are non-generic although this demands further research not considered here.

In \Cref{fig:mobility_figure,fig:6}, the mobility/pinning of peak intensity is observed in the nonlocal dynamics given by $J_{n} = |n|^{-(1+\alpha)}$ for $n \neq 0$ with the initial condition as the onsite sequence defined in \eqref{dnls2}. As $\alpha \rightarrow 0$, the nonlocal coupling blows up as $\lim\limits_{\alpha \rightarrow 0+}\zeta(1+\alpha)=\infty$. Moreover the conservation of particle numbers between the onsite and offsite solutions fails in the sense described in \Cref{rmk}, resulting in the erratic behavior illustrated in the top-left plot of \Cref{fig:6}. For $\alpha$ not too small, the intensity drifts and eventually pins at a lattice point. More precisely as $\alpha \to \infty$, the non-locality weakens, leading to a weaker drift (pinning) at earlier times. See \cite[Figure 1]{jenkinson_weinstein_2017} for the drift and pinning under the DNLS dynamics for varying mesh grid sizes $h>0$ whereas \Cref{fig:mobility_figure} plots the argmax of peak intensity for various $\alpha$.

\begin{figure}[H]
\centering
\begin{subfigure}[h]{0.45\linewidth}
\centering
\includegraphics[width=\linewidth]{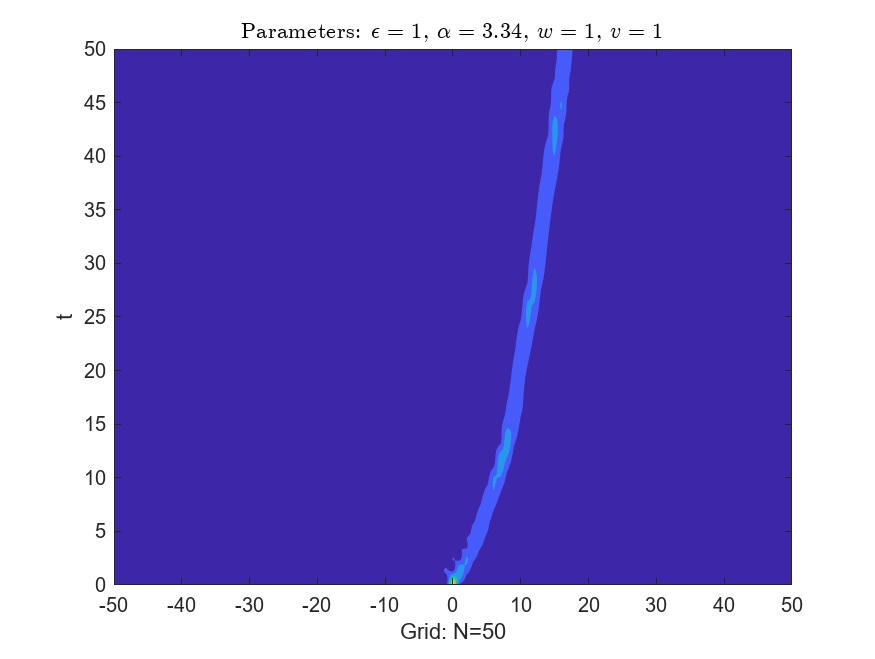}
\caption{}
\label{fig:varyingAlpha_A}
\end{subfigure}
\begin{subfigure}[h]{0.450\linewidth}
\centering
\includegraphics[width=\linewidth]{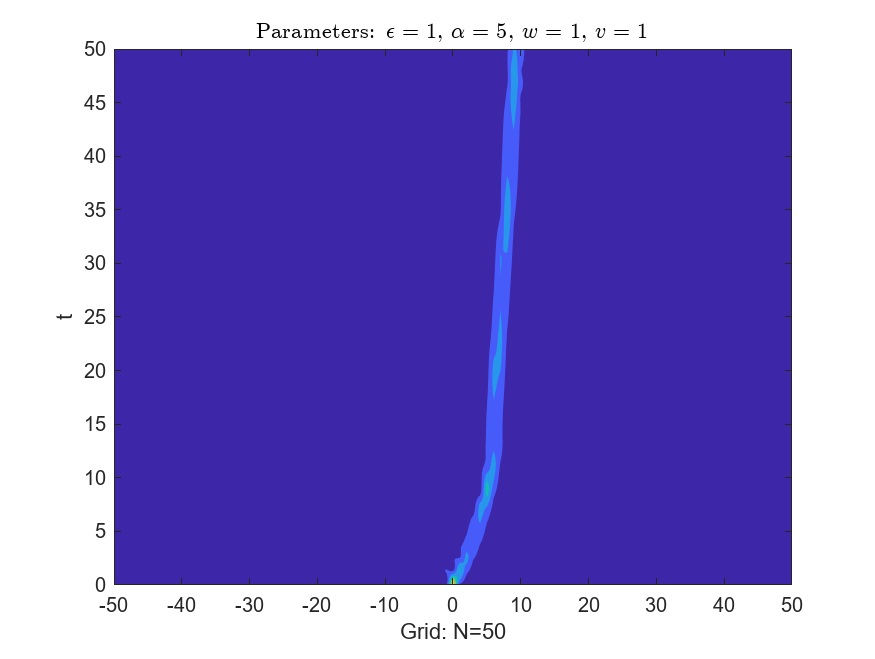}
\caption{}
\label{fig:varyingAlpha_B}
\end{subfigure}
\begin{subfigure}[h]{0.450\linewidth}
\centering
\includegraphics[width=\linewidth]{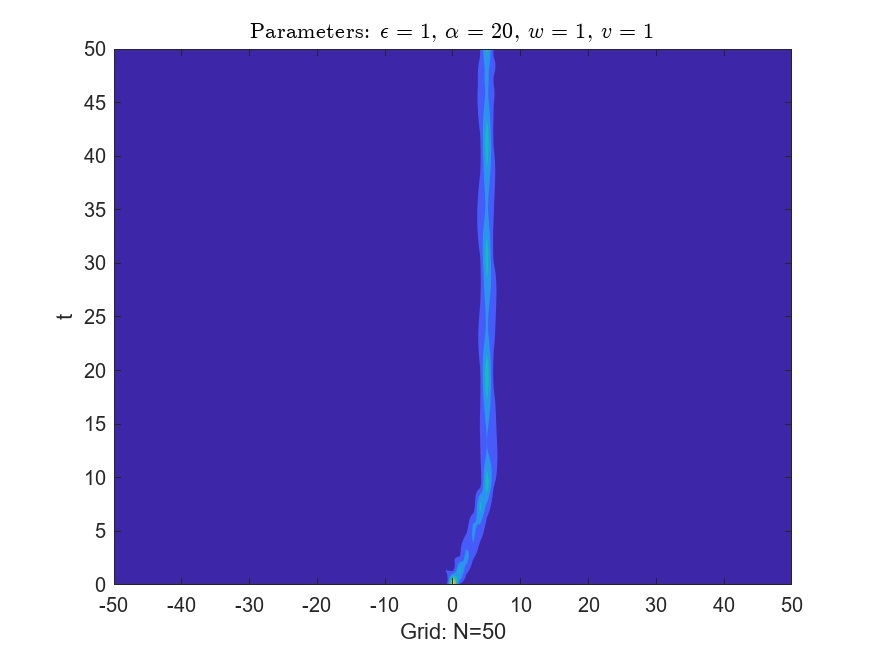}
\caption{}
\label{fig:varyingAlpha_C}
\end{subfigure}
\caption{Intensity plots for varying $\alpha \in \{3.34,5,20\}$ corresponding to \Cref{fig:varyingAlpha_A,fig:varyingAlpha_B,fig:varyingAlpha_C} respectively. Simulations use periodic boundary conditions. }
\label{fig:6}
\end{figure}

\section{Conclusion}

Coherent structures in the long-range fDNLS were quantified: a finite-dimensional spatial-dynamics reduction produced on/offsite stationary profiles with asymptotic validity and proximity to the anti-continuum branches. Modulational instability thresholds were characterized, and simulations showed mobility and pinning across parameters. The Peierls–Nabarro barrier was computed and regimes of near-vanishing PNB were identified. Dynamically, small-data scattering was obtained, and fDNLS was shown to exhibit distinct global behavior for any large $\alpha$. The 
$\alpha$-dependence suggests mechanisms to control transport in long-range lattices. Future work will explore interaction properties (collisions) between localized states and their extensions to 2D lattices with similar coupling functionality.

\section{Acknowledgements}
Both B.C. and A.M. acknowledge support from the NSF RTG award, DMS-1840260.

\appendix
\section{Appendix}\label{appendix}

\begin{proof}[Proof of \Cref{dnls_asymptotic}]
By symmetry, let $n \geq 0$. An explicit computation yields,
\begin{equation*}
    \frac{q_{n+1}}{q_n} = \frac{\rho}{1+2\rho} \xrightarrow[\rho \rightarrow 0]{} 0,
\end{equation*}
and
\begin{equation}\label{dnls_limit}
    \frac{\omega q_n}{\epsilon(q_{n+1}+q_{n-1}-2q_{n}) + q_n^3}=
    \begin{cases}
			\frac{1}{1+\rho^2 (1+2\rho)^{-1} + \rho^{2n}(1+2\rho)^{1-2n}},\ n > 0,\\
            \frac{1}{1+2\rho^2(1+2\rho)^{-1}},\ n=0.
		 \end{cases}
\end{equation}
Taking the limit of the RHS of \eqref{dnls_limit} as $\rho \rightarrow 0$, \eqref{dnls_asymp} is shown.
\end{proof} 

For our numerical simulations, a particular interaction kernel $J_n = |n|^{-(1+\alpha)}$ was used. Consider $(u_n)$ for $n = -N, \dots, N$. Then for the Dirichlet boundary condition, we have
\begin{equation}\label{fraclap_dirichet}
    \mathscr{L}_\alpha u_n = \sum_{-N \leq m \leq N, m \neq n} \frac{u_n - u_m}{|n-m|^{1+\alpha}},
\end{equation}
where $u_m = 0$ for all $|m| > N$.

For the periodic boundary condition, consider the modular arithmetics where the quotient space of $\mathbb{Z}$, with $I_N := \{-N,\dots,N-1\}$ as the fundamental cell, is considered, as $u_{-N} = u_{N}$. Given $m \in \mathbb{Z}$, let $m = 2Nq + r$ where $q \in \mathbb{Z},\ r \in I_N$, and assume $u_m = u_r$. Then for $n \in I_N$,
\begin{equation}\label{fraclap_periodic}
\begin{split}
\mathscr{L}_\alpha u_n &= \sum_{m \neq n} \frac{u_n - u_m}{|n-m|^{1+\alpha}}\\
    &= 2\zeta(1+\alpha)\left(1-\frac{1}{(2N)^{1+\alpha}}\right)u_n - \sum \limits_{r\neq n} c_{nr}(N,\alpha)u_r,    
\end{split}
\end{equation}
where
\begin{equation}\label{fraclap_periodic2}
    c_{nr} := \frac{1}{|n-r|^{1+\alpha}} + \frac{1}{(2N)^{1+\alpha}} \bigg\{ \zeta(1+\alpha, \frac{r-n}{2N}) +
    \zeta(1+\alpha, -\frac{r-n}{2N}) - |\frac{r-n}{2N}|^{-(1+\alpha)}(1+e^{-i(1+\alpha)\pi}) \bigg\},
\end{equation}
and $\zeta(s) = \sum\limits_{k=1}^\infty \frac{1}{k^s}$ is the Riemann zeta function and $\zeta(s,a) = \sum\limits_{k=0}^\infty \frac{1}{(k+a)^s}$ is the Hurwitz zeta function. Lastly we provide a brief derivation of \eqref{fraclap_periodic2}. Using $m = 2Nq + r$ as above,
\begin{equation*}
\begin{split}
\sum_{m \neq n} \frac{u_n - u_m}{|n-m|^{1+\alpha}} &= 2 \zeta(1+\alpha) u_n - \sum_{m \neq n} \frac{u_m}{|n-m|^{1+\alpha}}\\
&= 2 \zeta(1+\alpha) u_n - \sum_{r \in I_N \setminus \{n\}} \frac{u_r}{|n-r|^{1+\alpha}} - \sum_{r \in I_N} \sum_{q \in \mathbb{Z} \setminus \{0\}} \frac{u_r}{|q(2N) + r - n|^{1+\alpha}}    
\end{split}    
\end{equation*}
The last sum simplifies to $2\zeta(1+\alpha) (2N)^{-(1+\alpha)}u_n$ when $r = n$. When $r \neq n$,
\begin{equation*}
\begin{split}
    \sum_{q \in \mathbb{Z} \setminus \{0\}} \frac{(2N)^{1+\alpha}u_r}{|q(2N) + r - n|^{1+\alpha}} &= \sum_{q=1}^\infty\frac{1}{(q+\frac{r-n}{N})^{1+\alpha}} + \sum_{q=1}^\infty\frac{1}{(q-\frac{r-n}{N})^{1+\alpha}}\\
    &=\zeta(1+\alpha,\frac{r-n}{N}) + \zeta(1+\alpha,-\frac{r-n}{N}) - |\frac{r-n}{N}|^{-(1+\alpha)}\left(1+e^{-i(1+\alpha)\pi}\right).
\end{split}
\end{equation*}
Rearranging terms, the expression for $\mathscr{L}_\alpha u_n$ is derived as a matrix multiplication with dense entries.


\bibliographystyle{ieeetr}
\bibliography{nonlocal_references}

\end{document}